\newlength{\fighskip} \fighskip=2pt
\newlength{\figvskip} \figvskip=3pt
\numberwithin{equation}{section}
\newcommand{\C}{\mathbb{C}}
\newcommand{\N}{\mathbb{N}}
\newcommand{\Z}{\mathbb{Z}}
\newcommand{\W}{\mathcal{W}}
\DeclareMathOperator{\Sym}{\text{Sym}}
\newcommand{\A}{\mathcal A}
\theoremstyle{plain}
\newtheorem{thm}{Theorem}[section]
\newtheorem{thm-defn}{Theorem/Definition}[section]
\newtheorem{lem}[thm]{Lemma}
\newtheorem{lem-defn}[thm]{Lemma/Definition}
\newtheorem{prop}[thm]{Proposition}
\newtheorem{cor}[thm]{Corollary}
\theoremstyle{definition}
\newtheorem{defn}[thm]{Definition}
\newtheorem{notn}[thm]{Notation}
\theoremstyle{remark}
\newtheorem{rmk}[thm]{Remark}
\begin{document}
\title{Bargmann-Fock sheaves on K\"ahler manifolds}

\author{Kwokwai Chan}
\address{Department of Mathematics, The Chinese University of Hong Kong, Shatin, Hong Kong}
\email{kwchan@math.cuhk.edu.hk}

\author{Naichung Conan Leung}
\address{The Institute of Mathematical Sciences and Department of Mathematics, The Chinese University of Hong Kong, Shatin, Hong Kong}
\email{leung@math.cuhk.edu.hk}

\author{Qin Li}
\address{Shenzhen Institute for Quantum Science and Engineering, Southern University of Science and Technology, Shenzhen, China}
\email{liqin@sustech.edu.cn}

\subjclass[2010]{53D55 (58J20, 81T15, 81Q30)}
\keywords{Deformation quantization, geometric quantization}


\begin{abstract}
	Fedosov used flat sections of the Weyl bundle on a symplectic manifold to construct a star product $\star$ which gives rise to a deformation quantization. By extending Fedosov's method, we give an explicit, analytic construction of a sheaf of Bargmann-Fock modules over the Weyl bundle of a K\"ahler manifold $X$ equipped with a compatible Fedosov abelian connection, and show that the sheaf of flat sections forms a module sheaf over the sheaf of deformation quantization algebras defined $(C^\infty_X[[\hbar]], \star)$. This sheaf can be viewed as the $\hbar$-expansion of $L^{\otimes k}$ as $k \to \infty$, where $L$ is a prequantum line bundle on $X$ and $\hbar = 1/k$.
	
\end{abstract}

\maketitle

\section{Introduction}

This paper is the last in a series of papers \cite{Chan-Leung-Li, CLL}, in which we study the relation between deformation quantization and geometric quantization on a K\"ahler manifold $X$. More precisely, we aim at understanding how deformation quantization acts on geometric quantization via the study of Hilbert space representations of deformation quantization algebras constructed from the space of holomorphic sections $H^0(X, L^{\otimes k})$ of the tensor powers of a prequantum line bundle $L$ on $X$.

For K\"ahler manifolds, deformation quantizations of Wick type is particularly important because they are compatible with the complex structure. 
The most well-known one is the Berezin-Toeplitz quantization \cite{Bordemann-Meinrenken, Ma-Ma}, where one considers a compact K\"ahler manifold $X$ equipped with a prequantum line bundle $L$, as in the setting of geometric quantization. Asymptotic behavior of Toeplitz operators yields the Berezin-Toeplitz star product $\star_{BT}$ on $C^\infty(X)[[\hbar]]$. In \cite{Chan-Leung-Li}, we applied the technique of oscillatory integrals to construct a family $\{H_{x_0}\}$ of formal Hilbert space\footnote{The inner product on $H_{x_0}$ is formal because it takes values in $\C[[\hbar]]$.} representations of the Berezin-Toeplitz quantized algebra $(C^\infty(X)[[\hbar]], \star_{BT})$, parametrized by points $x_0 \in X$.
It is natural to ask, as $x_0$ varies, how the Hilbert spaces $H_{x_0}$ are related. One aim of this paper is to answer this question.


On the other hand, star products on general symplectic manifolds can be obtained by Fedosov's famous construction \cite{Fed, Fedbook}. There have also been extensive studies on Fedosov's construction on K\"ahler manifolds \cite{Bordemann, DLS, Karabegov00,Neumaier}. In \cite{CLL}, we constructed a special family of Fedosov abelian connections on a K\"ahler manifold $X$ as a natural quantization of Kapranov's $L_\infty$ structure \cite{Kapranov}. This gives rise to a star product $\star_\alpha$ for any formal closed $(1,1)$-form $\alpha$ on $X$.
Since $\star_\alpha$ satisfies locality, it defines a sheaf $(C^\infty_X[[\hbar]], \star_\alpha)$ of algebras on $X$, which should be viewed as the ``structure sheaf'' of its {\em quantum geometry}. The identification of the sheaf $C^\infty_X[[\hbar]]$ with the sheaf of flat sections of the Weyl bundle $\W_{X,\C}$ is denoted as $f\longleftrightarrow O_{f}$. 

When the Karabegov form $\omega_\hbar = 2\sqrt{-1} \cdot \omega - \alpha$ of $\star_\alpha$ is real analytic, we consider the subsheaf $(C^{\omega_\hbar}_X[[\hbar]],\star_\alpha)$ of smooth functions satisfying a real analytic condition (Definition \ref{definition: convergence-property-analytic-function}).
In Section \ref{section: module-sheaf}, we explicitly construct a sheaf of Bargmann-Fock modules over $\W_{X,\C}$ which is equipped with a compatible Fedosov abelian connection. We then prove that the subsheaf $\mathcal{F}_{X,\alpha}^{\text{flat}}$ of convergent flat sections, which we call the {\em Bargmann-Fock sheaf} $\mathcal{F}_{X,\alpha}^{\text{flat}}$ (see Definition \ref{definition: flat-Bargmann-Fock}), forms a module over the sheaf of analytic functions:
\begin{thm}[=Theorem \ref{theorem: Bargmann-Fock-representation}]\label{theorem: BF-rep-intro}
	The Bargmann-Fock sheaf $\mathcal{F}_{X,\alpha}^{\text{flat}}$ is a sheaf of modules over the sheaf $\left(C^{\omega_\hbar}_X[[\hbar]],\star_\alpha\right)$. 
\end{thm}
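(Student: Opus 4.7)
The plan is to obtain the module structure from the fiberwise Bargmann-Fock representation of the Weyl algebra, then show that it descends to flat sections and respects the analyticity condition. Fiberwise, at each $x\in X$ the Weyl fiber $\W_{X,\C}|_x$ acts on the Bargmann-Fock fiber via the standard Wick representation, in which creation-type fiber coordinates act by multiplication and annihilation-type fiber coordinates act by $\hbar$-scaled differentiation. This yields a pointwise associative bilinear map $(a,s)\mapsto a\cdot s$ of smooth sections $\W_{X,\C}\otimes \mathcal{F}_X\to \mathcal{F}_X$, which under the identification $f\longleftrightarrow O_f$ recovers $\star_\alpha$ in the sense that $O_{f\star_\alpha g} = O_f\cdot O_g$ on the Wick side; this is the very way $\star_\alpha$ was set up in \cite{CLL}.

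The first real claim is a Leibniz-type compatibility between the Fedosov abelian connection $D$ on $\W_{X,\C}$ and the companion connection $\tilde{D}$ on the Bargmann-Fock bundle constructed in Section \ref{section: module-sheaf}. Both connections are of the form (Levi-Civita-type connection) plus (inner derivation by a common Weyl-valued one-form $\gamma$), and since the Bargmann-Fock fiber is a module over the Weyl fiber the inner derivation by $\gamma$ on $\mathcal{F}_X$ coincides with the image of $\gamma$ under the fiber representation. From this one obtains
\begin{equation*}
\tilde{D}(a\cdot s) = (Da)\cdot s + a\cdot \tilde{D}s
\end{equation*}
for any section $a$ of $\W_{X,\C}$, so $D O_f = 0$ and $\tilde{D}s = 0$ imply $\tilde{D}(O_f\cdot s) = 0$. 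Combined with pointwise associativity this already produces an algebraic module structure on all formal flat sections.

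The main obstacle is to verify that $O_f\cdot s$ lies in the convergent subsheaf $\mathcal{F}_{X,\alpha}^{\text{flat}}$ whenever $f\in C^{\omega_\hbar}_X[[\hbar]]$ and $s\in \mathcal{F}_{X,\alpha}^{\text{flat}}$. The Bargmann-Fock action combines multiplication by fiber coordinates, which only reindexes Taylor coefficients, with differentiation by fiber coordinates, which rearranges coefficients by combinatorial factors, each weighted by a power of $\hbar$. I would expand both $O_f$ and $s$ as formal power series in the fiber coordinates around a point $x_0$, apply Cauchy estimates to bound the fiber derivatives of $O_f$ on a polydisk of radius $r$, and then combine the analytic bounds provided by Definition \ref{definition: convergence-property-analytic-function} for $f$ with those of Definition \ref{definition: flat-Bargmann-Fock} for $s$ to produce a common convergent majorant for $O_f\cdot s$ on a slightly smaller polydisk. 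The crucial point is that the same Karabegov form $\omega_\hbar$ governs both the analyticity of $f$ and the convergence of $s$, so the two sets of estimates are compatible; the remaining work is careful but essentially combinatorial bookkeeping.

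Granting the convergence step, associativity $(f\star_\alpha g)\cdot s = f\cdot(g\cdot s)$ and unitality $1\cdot s = s$ on $\mathcal{F}_{X,\alpha}^{\text{flat}}$ follow at once from $O_{f\star_\alpha g} = O_f\cdot O_g$ and from the pointwise module axioms, while the sheaf (locality) property is automatic from the pointwise definition. This yields the desired sheaf of $(C^{\omega_\hbar}_X[[\hbar]],\star_\alpha)$-modules.
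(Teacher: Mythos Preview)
Your overall architecture matches the paper's: fiberwise Bargmann-Fock action, Leibniz compatibility with the two connections (this is Lemma \ref{lemma: compatibility-Fedosov-connection}), and then a convergence check. But the convergence step, which you correctly identify as the main obstacle, is where the proposal has a real gap.

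First, the fiberwise action $\W_{X,\C}\otimes\mathcal{F}_X\to\mathcal{F}_X$ is \emph{not} well-defined for arbitrary sections of $\W_{X,\C}$; as the paper shows just before Lemma \ref{lemma: finite-condition-module-action}, one runs into genuinely divergent infinite sums unless the section is admissible. The Fedosov flat section $O_f$ is not admissible in general, so one cannot simply invoke Lemma \ref{lemma: compatibility-Fedosov-connection} and be done. The flatness of $O_f\circledast s$ must instead be obtained by commuting $D_{B,\alpha}$ past an infinite sum, and this requires uniform convergence of both the series and its derivative (the paper handles this via Lemma \ref{lemma: convergence-of-derivatives-series} together with the derivative estimate \eqref{equation: convergence-fixed-holomorphic-degree-derivatives}).

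Second, and more importantly, your plan to ``apply Cauchy estimates to bound the fiber derivatives of $O_f$ on a polydisk of radius $r$'' is circular: $O_f$ is a formal power series in the fiber variables, and Cauchy estimates presuppose exactly the convergence you are trying to establish. The definition of $C^{\omega_\hbar}_X$ gives you analyticity of $f$, not of $O_f$. The passage from one to the other is the content of Lemma \ref{lemma: anti-holomorphic-Taylor-series-convergent-radius}: one analyzes the Fedosov iteration $O_f=f+\delta^{-1}(\nabla O_f+\tfrac{1}{\hbar}[I_\alpha,O_f]_\star)$ and observes that, for fixed holomorphic index $I_0$ and fixed $\hbar$-degree, only the operator $\delta^{-1}\circ\nabla^{0,1}$ can contribute infinitely many times, so the resulting anti-holomorphic series is the Taylor expansion (in K-coordinates) of a function built from $f$, the Christoffel symbols, and the coefficients of $\omega_\hbar$---all of which have radius of convergence at least $r(x_0)$. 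This structural argument is the heart of the proof, and ``careful but essentially combinatorial bookkeeping'' understates it; without it one has no way to transport the analyticity of $f$ to the required convergence of $O_f\circledast e^{\beta/\hbar}$ against the bound \eqref{equation: convergent-Fock-boundedness-condition}.
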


We denote the action of $f\in (C^{\omega_\hbar}_X[[\hbar]],\star_\alpha)$ on $\mathcal{F}_{X,\alpha }^{\text{flat}}$ by $O_f \circledast -$.
Our next theorem shows that this action is given by {\em formal Toeplitz operators} introduced in \cite[Definition 2.24]{Chan-Leung-Li}. These are defined as compositions of multiplication and orthogonal projection operators on formal Hilbert spaces, so they are formal analogues of the usual Toeplitz operators.
The formal Hilbert space relevant to us here is a subspace $V_{x_0}$ of the stalk $(\mathcal{F}_{X,\alpha}^{\text{flat}})_{x_0}$ at a point $x_0 \in X$, which is isomorphic to the space of germs of formal holomorphic functions at $x_0$, i.e., $V_{x_0}\cong\mathcal{O}_{X,x_0}[[\hbar]]$ (see Proposition \ref{proposition: representation-stalk}). 
A germ $\Psi _{s}\in V_{x_0}$ can be identified with a germ of holomorphic function $s\in \mathcal{O}_{X,x_0}\left[ \left[ h\right] \right] $ via
$$\Psi _{s}=J_{s}\cdot e^{\beta /h}\otimes e_{x_0},$$
where $J_{s}$ is the jet of $s$ at $x_0$ expressed in K-coordinates. 
More detailed explanation of the notations can be found in Section \ref{section: star-product-formal-Toeplitz}. 
\begin{thm}[=Theorem \ref{theorem: Bargmann-Fock-microlocal-Toeplitz}]
Given  $f\in C^{\omega_\hbar}_X[[\hbar]]$ and $\Psi _{s}\in V_{x_{0}}$ for any $x_{0}\in X$, we have
$$
O_{f}\circledast \Psi _{s}=\Psi _{s^{\prime }},
$$
where $s^{\prime }$ is obtained by applying the formal Toeplitz operator associated to $f$ on $s$, namely,
$$
T_{(J_{f})_{x_0},\Phi }(J_s) = J_{s^{\prime }}.
$$
\end{thm}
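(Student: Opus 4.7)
The plan is to reduce both sides of the claimed equality to explicit formulas in a K-coordinate chart centered at $x_{0}$ and then recognize that they agree term by term in $\hbar$. By Proposition \ref{proposition: representation-stalk}, the subspace $V_{x_{0}}\subset(\mathcal{F}_{X,\alpha}^{\text{flat}})_{x_{0}}$ is identified with $\mathcal{O}_{X,x_{0}}[[\hbar]]$ via $\Psi_{s}\leftrightarrow s$, so both $O_{f}\circledast\Psi_{s}$ and $\Psi_{s'}$ lie in $V_{x_{0}}$. Therefore, the claim reduces to an identity of germs of formal holomorphic functions at $x_{0}$, and it suffices to compute each side and match them.

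First, I would unpack the module action $\circledast$ on stalks. Since $O_{f}$ is flat for the Fedosov connection on $\W_{X,\C}$ and $\Psi_{s}$ is flat for the compatible Bargmann-Fock connection, $O_{f}\circledast\Psi_{s}$ is again flat and hence determined by its value at $x_{0}$. Evaluating at $x_{0}$ reduces the action to the fiberwise Weyl algebra action on the Bargmann-Fock module, in which holomorphic fiber coordinates $y^{i}$ act by multiplication and antiholomorphic ones $\bar{y}^{i}$ act by the Bargmann-Fock creation/annihilation rule dictated by the phase $e^{\beta/\hbar}$, namely as $\hbar$-weighted derivations contracting against $\partial\beta$. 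Writing $O_{f}=J_{f}+\sum_{k\ge 1}\hbar^{k}(\text{higher Fedosov corrections})$ in K-coordinates and $\Psi_{s}=J_{s}\cdot e^{\beta/\hbar}\otimes e_{x_{0}}$, this gives a fully explicit expansion for $O_{f}\circledast\Psi_{s}$ as a formal power series in $\hbar$ with holomorphic coefficients, where each term is a polynomial in the derivatives of $J_{f}$, $J_{s}$, and $\beta$.

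Next I would compare this to the right-hand side by recalling from Section \ref{section: star-product-formal-Toeplitz} the definition of the formal Toeplitz operator $T_{(J_{f})_{x_{0}},\Phi}$: it is the composition of multiplication by $J_{f}$ with the formal orthogonal projection associated to the phase function $\Phi$ onto the formal holomorphic subspace. By the stationary-phase expansion defining the formal inner product, this projection is computed by iterated Wick contractions against the Hessian of $\Phi$. The key structural observation is that the Bargmann-Fock fiber action at $x_{0}$ implements exactly these Wick contractions: the $\bar{y}^{i}$-letters in $O_{f}$ produced by the Fedosov iteration are precisely what evaluate, against $e^{\beta/\hbar}$, into the contractions with $\partial\bar\partial\Phi$ that appear in the Toeplitz expansion. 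Consequently, term by term in $\hbar$, the holomorphic coefficient produced by $O_{f}\circledast\Psi_{s}$ agrees with the coefficient of $J_{s'}=T_{(J_{f})_{x_{0}},\Phi}(J_{s})$.

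The main obstacle is this last identification: one must match, to all orders in $\hbar$, the combinatorics of Fedosov's recursive solution for $O_{f}$ (and for the antiholomorphic letters it contains) with the Wick-contraction expansion coming from the formal inner product defining the Toeplitz projection. I would proceed by induction on the polynomial degree in the fiber coordinates, using the defining equations of the Fedosov abelian connection together with the flatness equation for $\Psi_{s}$ established in Section \ref{section: module-sheaf} to show that both sides satisfy the same recursion and have the same leading term $J_{f}\cdot J_{s}$. Once this recursive characterization is in place, uniqueness of the flat extension of a holomorphic germ, together with the already-proven module property (Theorem \ref{theorem: BF-rep-intro}), will force the two outputs to coincide.
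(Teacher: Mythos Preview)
Your overall shape---evaluate at $x_0$, reduce to a fiberwise statement, and match with the formal Toeplitz operator---is right, but the proposal misses the two algebraic facts that make the argument immediate in the paper, and without them your ``term-by-term induction'' is both vague and unnecessary.

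First, you overlook the simplification $\beta|_{x_0}=0$ (Proposition~\ref{proposition: local-section-prequantum-Bargmann-Fock}). At $x_0$ the exponential $e^{\beta/\hbar}$ is trivial, so there is no ``contracting against $\partial\beta$'': the Bargmann-Fock action of $(O_f)_{x_0}$ on $J_s\cdot e^{\beta/\hbar}\otimes e_{x_0}$ reduces literally to the standard action $(O_f)_{x_0}\circledast J_s$ on $\mathcal{F}_{\C^n}$. This already gives $J_{s'}=(O_f)_{x_0}\circledast J_s$.

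Second, and more importantly, you do not invoke Theorem~\ref{thm:main-thm-formal-Toeplitz-Fedosov-equation} and Proposition~\ref{proposition: formula-formal-Toeplitz}(3). Theorem~\ref{thm:main-thm-formal-Toeplitz-Fedosov-equation} says $J_f\cdot e^{\Phi/\hbar}=e^{\Phi/\hbar}\star O_f$, while Proposition~\ref{proposition: formula-formal-Toeplitz}(3) says that the Wick element implementing $T_{g,\phi}$ is \emph{characterized} by $g\cdot e^{\phi/\hbar}=e^{\phi/\hbar}\star O_g$. Comparing these at $x_0$ with $g=(J_f)_{x_0}$ and $\phi=\Phi$ shows that $(O_f)_{x_0}$ is exactly the element that implements $T_{(J_f)_{x_0},\Phi}$; hence $T_{(J_f)_{x_0},\Phi}(J_s)=(O_f)_{x_0}\circledast J_s=J_{s'}$ and the proof is done. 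Your proposed induction on polynomial degree, matching Fedosov recursion against Wick contractions, is essentially an attempt to re-derive Theorem~\ref{thm:main-thm-formal-Toeplitz-Fedosov-equation} and Proposition~\ref{proposition: formula-formal-Toeplitz} from scratch; as written it is not detailed enough to be a proof, and the decomposition ``$O_f=J_f+\sum_{k\ge1}\hbar^k(\cdots)$'' is not the correct relationship between $O_f$ and $J_f$ (they differ by a gauge transformation by $e^{\Phi/\hbar}$, not by an $\hbar$-perturbation).
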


Our construction and proof of Theorem \ref{theorem: BF-rep-intro}, which are analytic in nature, follow Fedosov's original approach closely. Note that the module sheaves in Theorem \ref{theorem: BF-rep-intro} exist even when $X$ is not pre-quantizable. On the other hand, closely related studies on such module sheaves have been carried out using deformation-obstruction theory. In the real symplectic manifolds context, such constructions were established in the work of Nest-Tsygan \cite{Nest-Tsygan} and Tsygan \cite{Tsygan}. In \cite{BGKP}, Baranovsky, Ginzburg, Kaledin and Pecharich gave a deformation theoretic construction of quantizations of line bundles as module sheaves in the algebraic setting.

\subsection*{Acknowledgement}
\

We thank Si Li and Siye Wu for useful discussions, and the anonymous referees for valuable comments. The first named author thanks Martin Schlichenmaier and Siye Wu for inviting him to attend the conference GEOQUANT 2019 held in September 2019 in Taiwan, in which he had stimulating and very helpful discussions with both of them as well as Jørgen Ellegaard Andersen, Motohico Mulase, Georgiy Sharygin and Steve Zelditch.

K. Chan was supported by grants of the Hong Kong Research Grants Council (Project No. CUHK14302617 \& CUHK14303019) and direct grants from CUHK.
N. C. Leung was supported by grants of the Hong Kong Research Grants Council (Project No. CUHK14301117 \& CUHK14303518) and direct grants from CUHK.
Q. Li was supported by Guangdong Basic and Applied Basic Research Foundation (Project No. 2020A1515011220).

\section{Preliminaries on Fedosov deformation quantization}

Throughout this paper, we assume that $X$ is a K\"ahler manifold of complex dimension $n$, with $\omega$ denoting its K\"ahler form. We first recall the definition of Wick type deformation quantization on a K\"ahler manifold:
\begin{defn}\label{definition: Wick-type-deformation-quantization}
A {\em deformation quantization} on $X$ is a $\mathbb{R}[[\hbar]]$-bilinear, associative product $\star$ on  $C^\infty(X)[[\hbar]]$ of the form
$$
f\star g=f\cdot g+\sum_{k\geq 0}\hbar^k C_k(f,g),
$$
where $C_k(\cdot, \cdot)$'s are bi-differential operators on $X$, such that
$$f\star g-g\star f=\hbar\{f,g\}+O(\hbar^2).$$
It is said to be {\em of Wick type} if the bi-differential operators $C_k(\cdot, \cdot)$'s take holomorphic and anti-holomorphic derivatives of the first and second arguments respectively.
\end{defn}

In this section, we give a very brief review of Fedosov's construction of Wick type deformation quantizations on a K\"ahler manifold $X$; we refer to \cite{Fed} for more details.

\begin{defn}\label{definition: Wick-algebra}
The {\em Wick product} on the space $\mathcal{W}_{\mathbb{C}^n}:=\mathbb{C}[[z^1,\bar{z}^1,\cdots,z^n,\bar{z}^n]][[\hbar]]$ is defined by 
 \begin{equation*}\label{equation: defn-Wick-product}
  f\star g:=\exp\left(-\hbar\sum_{i=1}^n\frac{\partial}{\partial z^i}\frac{\partial}{\partial \bar{w}^i}\right)(f(z,\bar{z})g(w,\bar{w}))\Big|_{z=w}
 \end{equation*}
\end{defn}
We assign a $\Z$-grading on $\mathcal{W}_{\mathbb{C}^n}$ by letting the monomial $\hbar^k z^I\bar{z}^J$ to have total degree $2k+|I|+|J|$. On a K\"ahler manifold $X$, we consider the following Weyl bundles:
\begin{align*}\label{equation: Weyl-bundle}
 \W_{X}& := \widehat{\Sym}T^*X[[\hbar]], \quad \overline{\W}_X:=\widehat{\Sym}\overline{T^*X}[[\hbar]],\\
 \W_{X,\C}& := \W_{X}\otimes\overline{\W}_X=\widehat{\Sym}T^*X_{\C}[[\hbar]].
\end{align*}
The fiberwise Hermitian structure on the complexified tangent bundle $TX_{\C}$ enables us to define a fiberwise (non-commutative) Wick product on $\W_{X,\C}$. Under a local holomorphic coordinate system $z=(z^1,\cdots, z^n)$, a section of $\W_{X,\C}$ is written as
$$
\alpha=\sum_{I,J}\alpha_{I\bar{J}}y^I\bar{y}^J,
$$
where the sum is over all multi-indices. Writting $\omega=\omega_{i\bar{j}}dz^i\wedge d\bar{z}^j$, then we have
\begin{equation*}\label{equation: fiberwise-Wick-product}
 \alpha\star\beta:=\sum_{k\geq 0}\frac{1}{k!}\cdot\left(\frac{\sqrt{-1}\cdot\hbar}{2}\right)^k\omega^{i_1\bar{j}_1}\cdots\omega^{i_k\bar{j}_k}\frac{\partial^k\alpha}{\partial y^{i_1}\cdots\partial y^{i_k}}\frac{\partial^k\beta}{\partial \bar{y}^{j_1}\cdots\partial \bar{y}^{j_k}}.
\end{equation*}
There is the natural {\em symbol map} which takes the constant term of a formal power series:
$$
\sigma: \Gamma(X,\W_{X,\C})\rightarrow C^\infty(X)[[\hbar]].
$$
We also introduce several operators on $\A_X^\bullet(\mathcal{W}_{X,\mathbb{C}})$:
\begin{defn}
We define the following natural operators acting as derivations on $\A_X^\bullet(\mathcal{W}_{X,\mathbb{C}})$:
\begin{align*}
\delta^{1,0} a := dz^i\wedge\frac{\partial a}{\partial y^i},\quad 
\delta^{0,1}a := d\bar{z}^j\wedge\frac{\partial a}{\partial\bar{y}^j},\quad \delta:=\delta^{1,0}+\delta^{0,1},
\end{align*}
as well as
\begin{align*}
(\delta^{1,0})^*a  := y^i\cdot \iota_{\partial_{z^i}}a, \quad
(\delta^{0,1})^*a  := \bar{y}^j\cdot \iota_{\partial_{\bar{z}^j}}a, \quad \delta^*:=(\delta^{1,0})^*+(\delta^{0,1})^*
\end{align*}
We also define the operators $(\delta^{1,0})^{-1}$ and $(\delta^{0,1})^{-1}$ by normalizing $(\delta^{1,0})^{*}$ and $(\delta^{1,0})^{*}$ respectively: 
\begin{align*}
 (\delta^{1,0})^{-1}:=&\frac{1}{p_1+p_2}(\delta^{1,0})^*\ \text{on $\A_X^{p_1,q_1}(\W_{X,\C})_{p_2,q_2}$},\\(\delta^{0,1})^{-1}:=&\frac{1}{q_1+q_2}(\delta^{0,1})^*\ \text{on $\A_X^{p_1,q_1}(\W_{X,\C})_{p_2,q_2}$},\\
\delta^{-1}:=&\frac{1}{p+q}\delta^*\ \text{on $\A_X^{p}(\W_{X,\C})_q$}.
\end{align*}
\end{defn}

Following \cite{Fed}, we define the following extension of the Wick algebra:\footnote{Note that the extension of the Weyl algebra considered in \cite{Chan-Leung-Li} is {\em different} from the one here.}
\begin{defn}[p.224 in \cite{Fed}]\label{defn:extension-Wick-algebra}
The extention $\mathcal{W}_{\mathbb{C}^n}^+$ of $\mathcal{W}_{\mathbb{C}^n}$ is defined as follows:
\begin{itemize}
 \item Elements $U\in\mathcal{W}_{\mathbb{C}^n}^+$ are given by power series, possibly with negative powers of $\hbar$.
 \item For any element $U \in \mathcal{W}_{\mathbb{C}^n}^+$, the total degree $2k+|I|+|J|$ of every term is nonnegative.
 \item For any element $U \in \mathcal{W}_{\mathbb{C}^n}^+$, there are only a finite number of terms with a given nonnegative total degree.
\end{itemize}
\end{defn}
The extension $\mathcal{W}_{\mathbb{C}^n}^+$ is closed under the Wick product, and we can define the corresponding extended Weyl bundle $\W_{X,\C}^+$ on $X$. 

A {\em Fedosov abelian connection} on the Weyl bundle $\W_{X,\C}$ is a connection of the form
\begin{equation*}\label{equation: Fedosov-connection-Weyl-bundle}
 D_F=\nabla-\delta+\frac{1}{\hbar}[I,-]_\star
\end{equation*}
which is flat, i.e., $D_F^2=0$; here $\nabla$ is the Levi-Civita connection on $X$, $[-,-]_\star$ denotes the bracket associated to the Wick product, and $I\in\A^1_X(\W_{X,\C})$. 
Note that $\nabla^2=\frac{1}{\hbar}[R_\nabla,-]_\star$, where $R_\nabla=-2\sqrt{-1}R_{i\bar{j}k}^m\omega_{m\bar{l}}dz^i\wedge d\bar{z}^j\otimes y^k\bar{y}^l$.
From this we see that the flatness condition $D_F^2=0$ is equivalent to the {\em Fedosov equation}:
\begin{equation}\label{equation: Fedosov-equation-form-1}
\nabla I-\delta I+\frac{1}{\hbar}I\star I+R_\nabla=-\alpha\in\hbar\cdot \A_X^2(X)[[\hbar]].
\end{equation}
Let $\Gamma^{\text{flat}}(X,\W_{X,\C})$ be the space of flat sections of the Weyl bundle under $D_F$. It is shown in \cite{Fed} that the symbol map $\sigma: \W_{X,\C}\rightarrow C^\infty(X)[[\hbar]]$ induces the following isomorphism:
$$
\Gamma^{\text{flat}}(X,\W_{X,\C})\overset{\sim}{\rightarrow}C^\infty(X)[[\hbar]].
$$
If we denote by $O_f$ the flat section of $\W_{X,\C}$ corresponding to a formal smooth function $f$, then the associated star product can be defined by
$$O_{f\star g}:=O_f\star O_g.$$

\subsection{$L_\infty$ structure on K\"ahler manifolds: classical and quantum}
\

\noindent In this subsection, we first recall Kapranov's $L_\infty$-algebra structure on a K\"ahler manifold and its geometric interpretation. Then we describe its classical and quantum extensions. As discovered in \cite{CLL}, the latter gives rise to a special class of Fedosov connections. 

 Explicitly, the $L_\infty$ structure is equivalent to the following flat connection on $\W_{X}$:
\begin{equation*}\label{equation: L-infty-structure}
 D_K=\nabla-\delta^{1,0}+\sum_{n\geq 2}\tilde{R}_n^*, 
\end{equation*}
where the subscript ``K'' stands for ``Kapranov''.
Here $\tilde{R}_n^*$'s are defined by extending the following $R_n^*$'s to $\A_X^\bullet$-linear derivations on $\W_X$:
$$
R_2^*=\frac{1}{2}R_{i\bar{j}k}^m d\bar{z}^j\otimes (y^iy^k\otimes\partial_{y^m}),\quad R_n^*=(\delta^{1,0})^{-1}\circ{\nabla}^{1,0}(R_{n-1}^*)\text{ for $n>2$},
$$
where $R_{i\bar{j}k}^m$'s are the coefficients of the curvature tensor. We write these $R_n^*$'s locally as
$$
R_n^*=R_{i_1\cdots i_n,\bar{l}}^jd\bar{z}^l\otimes (y^{i_1}\cdots y^{i_n}\otimes\partial_{y^j}).
$$
\begin{rmk}
These $\A_X^\bullet$-linear operators $\tilde{R}_n^*$'s extend naturally to the complexified Weyl bundle $\A_X^\bullet(\W_{X,\C})$.
\end{rmk}

\begin{notn}To simplify notations in later computations, we introduce two operators:
$$
\tilde{\nabla}^{1,0}:=(\delta^{1,0})^{-1}\circ\nabla^{1,0},\quad
\tilde{\nabla}^{0,1}:=(\delta^{0,1})^{-1}\circ\nabla^{0,1}.
$$
\end{notn}
The symbol map of the Weyl bundle gives rise to an isomorphism:
\begin{prop}\label{proposition:flat-sections-holomorphic-Weyl}
For every $f\in\mathcal{O}_X(U)$, there exists a unique flat section $J_f$ (where ``J'' stands for ``jets'') under the connection $D_K$ such that $\sigma(J_f)=f$. Explicitly:
\begin{equation*}\label{equation: jet-holomorphic-function}
J_f=\sum_{k\geq 0}(\tilde{\nabla}^{1,0})^k(f). 
\end{equation*}
Thus the space of (local) flat sections of the holomorphic Weyl bundle with respect to the connection $D_K$ is isomorphic to the space of holomorphic functions. 
\end{prop}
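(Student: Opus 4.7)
The plan is to decompose the flatness equation $D_K\alpha=0$ by Dolbeault form-type. Since $\nabla^{1,0}\alpha$ and $\delta^{1,0}\alpha$ take values in $\A^{1,0}_X(\W_X)$, while $\nabla^{0,1}\alpha$ and each $\tilde R_n^*\alpha$ take values in $\A^{0,1}_X(\W_X)$, the equation splits into two independent conditions:
\begin{equation*}
(\nabla^{1,0}-\delta^{1,0})\alpha=0, \qquad \Bigl(\nabla^{0,1}+\sum_{n\geq 2}\tilde R_n^*\Bigr)\alpha=0.
\end{equation*}

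Uniqueness and the explicit formula follow from the first equation alone. The Fedosov homotopy identity $\delta^{1,0}(\delta^{1,0})^{-1}+(\delta^{1,0})^{-1}\delta^{1,0}=\mathrm{id}$, valid on components of strictly positive $(p_1,p_2)$-degree, lets me apply $(\delta^{1,0})^{-1}$ to the first equation and isolate the symbol part, yielding the fixed-point relation $\alpha=\sigma(\alpha)+\tilde\nabla^{1,0}\alpha$. Since $\tilde\nabla^{1,0}$ strictly raises polynomial degree in $y$, iterating with $\sigma(\alpha)=f$ forces $\alpha=\sum_{k\geq 0}(\tilde\nabla^{1,0})^k(f)$, proving both uniqueness and the candidate formula for $J_f$.

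For existence, I verify that this $J_f$ satisfies both form-type equations. The first reduces term-by-term to the homotopy identity. The second is the main work: sort by polynomial degree in $y$ and note that $\tilde R_n^*$ raises polynomial degree by $n-1$. The degree-zero component is $\bar\partial f=0$, which holds by holomorphicity of $f$. The higher-degree components are verified by induction. The base contribution comes from the $(1,1)$-curvature commutator of the Levi-Civita connection acting on a section of $\W_X$, which, after passing through $(\delta^{1,0})^{-1}$, produces exactly the $\tilde R_2^*$-insertion needed at polynomial degree $1$. The inductive step then propagates through the recursive definition $R_n^*=(\delta^{1,0})^{-1}\nabla^{1,0}(R_{n-1}^*)$ together with the K\"ahler fact that $\nabla^{0,1}$ commutes with $(\delta^{1,0})^{-1}$ on $\W_X$ (since $\nabla^{0,1}y^i=0$ and $\nabla^{0,1}dz^j=0$), converting each successive application of $\tilde\nabla^{1,0}$ into the next higher $\tilde R_n^*$-insertion.

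The main obstacle is this induction, which is the technical heart of Kapranov's original construction: the $n$ lower $y$-indices of $\tilde R_n^*$ must be matched against the precise Taylor-type combinations of covariant derivatives of $f$ appearing in $(\tilde\nabla^{1,0})^k(f)$, and the combinatorial bookkeeping to close the recursion is nontrivial. Finally, for the converse direction of the claimed isomorphism, given any flat $\alpha$ the polynomial-degree-zero part of the second equation reads $\bar\partial\sigma(\alpha)=0$ (the $\tilde R_n^*$-contributions all have strictly positive polynomial degree), so $\sigma(\alpha)\in\OO_X(U)$, completing the bijection.
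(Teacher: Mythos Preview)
The paper states this proposition without proof, as a recall of Kapranov's construction; there is no argument in the paper to compare against. Your approach is sound: the type decomposition is correct, the fixed-point derivation of the explicit formula from the $(1,0)$-equation is clean, and your remark that the degree-zero part of the $(0,1)$-equation is exactly $\bar\partial\sigma(\alpha)=0$ correctly explains why the symbol of a flat section must be holomorphic.

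One simplification is worth pointing out. You flag the direct inductive verification of the $(0,1)$-equation as the ``main obstacle,'' and indeed closing that recursion by hand amounts to reproving a piece of Kapranov's flatness theorem. But the paper already records, just before the proposition, that $D_K$ is flat. You can use this to bypass the induction entirely: once you know $D_K^{1,0}J_f=0$, flatness $D_K^2=0$ decomposed by type gives $D_K^{1,0}D_K^{0,1}+D_K^{0,1}D_K^{1,0}=0$, hence $D_K^{1,0}\bigl(D_K^{0,1}J_f\bigr)=0$. Now $D_K^{0,1}J_f\in\A^{0,1}_X(\W_X)$ has no $dz$-component, so $(\delta^{1,0})^{-1}$ annihilates it and the homotopy identity gives the same fixed-point relation $D_K^{0,1}J_f=\bigl[\text{degree-}0\text{ part}\bigr]+\tilde\nabla^{1,0}\bigl(D_K^{0,1}J_f\bigr)$. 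The degree-zero part is $\bar\partial f=0$, so iterating forces $D_K^{0,1}J_f=0$. This replaces your curvature-commutator induction with a two-line argument and uses only what the paper has already asserted.

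A minor comment on the $(1,0)$-verification: ``reduces term-by-term to the homotopy identity'' is slightly too brief. To close the telescoping you also need $[\nabla^{1,0},\delta^{1,0}]=0$ (which holds because $\delta^{1,0}$ is built from $\nabla$-parallel tensors) and $(\nabla^{1,0})^2=0$ (K\"ahler curvature is of type $(1,1)$). With those two facts in hand the step is indeed routine.
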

It was shown by Bochner that there exists the following special holomorphic coordinate system at each point $x_0\in X$ when the K\"ahler form $\omega$ is real analytic. 
\begin{defn}\label{definition: K-coordinates-K-frame}
	 A holomorphic coordinate system $(z_1,\cdots, z_n)$ centered at $x_0\in X$ is called a {\em K\"ahler normal coordinate} if there exists a unique function $\rho_{x_0}$ around $x_0$ such that $\partial\bar{\partial}(\rho_{x_0})=-2\sqrt{-1}\omega$ and whose Taylor expansion at $x_0$ is of the form
 	\begin{equation}\label{equation: Taylor-expansion-hermitian-metric}
 		\rho_{x_0}(z,\bar{z})\sim -2\sqrt{-1}\cdot \omega_{i\bar{j}}(x_0)z^i\bar{z}^j+\sum_{|I|,|J|\geq 2}\frac{1}{|I|!|J|!}\frac{\partial^{|I|+|J|}\rho_{x_0}}{\partial z^I\bar{z}^J}(x_0)z^I\bar{z}^J.
 	\end{equation}
	 If $-2\sqrt{-1}\omega_{i\bar{j}}(x_0)=\delta_{ij}$, then we call this a {\em K-coordinate centered at $x_0$}.
\end{defn}
The geometric meaning of the connection $D_K$ is that the germ $(J_f)_{x_0}$ is precisely the Taylor expansion of $f$ at $x_0$ under the K\"ahler normal coordinates. 

We now introduce two extensions of $D_K$ to $\W_{X,\C}$: one quantum and the other classical. For the quantum extension, we first use the K\"ahler form to ``lift the last subscript'' of $R_n^*$ and define
$$
I_n:=-2\sqrt{-1}\cdot R_{i_1\cdots i_n,\bar{l}}^j\omega_{j\bar{k}}d\bar{z}^l\otimes (y^{i_1}\cdots y^{i_n}\bar{y}^{k})\in\mathcal{A}_X^{0,1}(\mathcal{W}_{X,\mathbb{C}}). 
$$
Then we let $I:=\sum_{n\geq 2}I_n$. In \cite{CLL}, we proved the following theorem:
\begin{thm}\label{theorem: Fedosov-connection-general}
Suppose $\alpha$ is a representative of a formal cohomology class in $\hbar H^2_{dR}(X)[[\hbar]]$ of type $(1,1)$. Let $\varphi$ be a (locally defined) function such that $\partial\bar{\partial}\varphi=\alpha$ and set $J_\alpha:=\sum_{k\geq 1}(\tilde{\nabla}^{1,0})^k(\bar{\partial}\varphi)$. Then we have
\begin{enumerate}
  \item $I_\alpha := I+ J_\alpha \in \mathcal{A}_X^{0,1}(\mathcal{W}_{X,\mathbb{C}})$ is a solution of the Fedosov equation, i.e.,
 	\begin{equation*}
	\nabla I_\alpha - \delta I_\alpha + \frac{1}{\hbar} I_\alpha\star I_\alpha + R_\nabla=-\alpha.
	\end{equation*}
  We denote the corresponding Fedosov abelian connection by $D_{F,\alpha}$ and the the corresponding Fedosov star product by $\star_\alpha$.
  \item The Fedosov connection $D_{F,\alpha}$ is an extension of $D_K$, i.e.,  $D_{F,\alpha}|_{\W_X}=D_K$. 
  \item Every star product on $X$ of Wick type can be obtained from such Fedosov connections. 
\end{enumerate}
\end{thm}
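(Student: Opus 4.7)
I would prove the three claims in the order (2), (1), (3). For part (2), let $f$ be a local section of $\W_X \subset \W_{X,\C}$, so $f$ depends only on the $y$-variables. Then $\delta^{0,1}f = 0$, and moreover $[J_\alpha,f]_\star = 0$, because the Wick contraction $\partial_{y^i}\cdot\partial_{\bar{y}^j}$ annihilates both slots (neither $J_\alpha$ nor $f$ has any $\bar{y}$-dependence). Hence $D_{F,\alpha}f = (\nabla-\delta^{1,0})f + \frac{1}{\hbar}[I,f]_\star$. A direct Wick-formula computation shows that the coefficient $-2\sqrt{-1}\,R^j_{i_1\cdots i_n,\bar{l}}\omega_{j\bar{k}}$ of $I_n$ was arranged precisely so that $\omega^{i\bar{k}}\partial_{\bar{y}^k}I_n$ recovers the differential operator $R_n^*$; thus $\frac{1}{\hbar}[I_n,f]_\star = \tilde{R}_n^*(f)$ and $D_{F,\alpha}|_{\W_X} = \nabla - \delta^{1,0} + \sum_{n\geq 2}\tilde{R}_n^* = D_K$.

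For part (1), I would write $I_\alpha = I + J_\alpha$ and expand
\[I_\alpha \star I_\alpha = I\star I + I\star J_\alpha + J_\alpha \star I + J_\alpha \star J_\alpha.\]
The central step is the ``unperturbed'' identity
\[\nabla I - \delta I + \frac{1}{\hbar}\, I\star I + R_\nabla = 0,\]
which via the $\omega$-lifting from part (2) is morally equivalent to Kapranov's flatness $D_K^2=0$. I would prove it by induction on $n$, using the recursion $R_n^* = (\delta^{1,0})^{-1}\nabla^{1,0}(R_{n-1}^*)$ and iterated Bianchi identities for the K\"ahler curvature, as in \cite{CLL}. For the $J_\alpha$-correction, since $\partial_{\bar{y}}J_\alpha = 0$ we immediately get $J_\alpha \star J_\alpha = J_\alpha \wedge J_\alpha = 0$ (the $d\bar{z}$-factors anti-commute while the fiber polynomials commute) and $I\star J_\alpha = I\cdot J_\alpha$; only $J_\alpha \star I$ carries a nontrivial $\hbar$-order correction through a single Wick contraction $\partial_{y^i}J_\alpha\cdot\partial_{\bar{y}^j}I$. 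The residual identity
\[\nabla J_\alpha - \delta J_\alpha + \frac{1}{\hbar}\bigl(I\cdot J_\alpha + J_\alpha \star I\bigr) = -\alpha\]
would then follow from the telescoping structure of $J_\alpha = \sum_{k\geq 1}(\tilde{\nabla}^{1,0})^k(\bar{\partial}\varphi)$: after successive cancellations against the $\delta^{1,0}$ and Wick contributions, the surviving outermost piece reduces via $\partial\bar{\partial}\varphi = \alpha$ to the required $-\alpha$.

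For part (3), parts (1) and (2) produce, for every formal closed $(1,1)$-form $\alpha$ as in the hypothesis, a Wick-type star product $\star_\alpha$. I would then invoke Karabegov's classification, which identifies equivalence classes of Wick-type star products with formal closed $(1,1)$-forms via the Karabegov form, and verify that the Karabegov form of $\star_\alpha$ is $\omega_\hbar = 2\sqrt{-1}\,\omega - \alpha$, matching the formula stated in the introduction. Since $\alpha$ ranges freely over all admissible representatives, every Wick-type star product on $X$ arises in this way.

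\emph{Main obstacle.} The computational heart is the unperturbed identity $\nabla I - \delta I + \frac{1}{\hbar}I\star I + R_\nabla = 0$. Although conceptually it is a reformulation of $D_K^2 = 0$, executing the proof requires coordinated bookkeeping of the $\omega$-raising of indices, the recursion for the $R_n^*$'s, and higher Bianchi identities, in a setting where the Wick bracket mixes jet-orders non-trivially. Once that identity is secured, the $J_\alpha$-correction in (1), the restriction in (2), and the Karabegov exhaustion in (3) are all relatively routine unwindings.
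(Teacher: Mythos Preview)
The paper does not prove this theorem here; it is quoted as a result established in the companion paper \cite{CLL}, so there is no in-paper argument to compare against. Your outline is nonetheless the natural strategy and matches how the proof in \cite{CLL} is organized: part (2) is a direct Wick-bracket computation, part (1) splits into the unperturbed identity $\nabla I - \delta I + \tfrac{1}{\hbar}I\star I + R_\nabla = 0$ (which is the $\omega$-lifted form of Kapranov's $D_K^2=0$) plus a $J_\alpha$-correction, and part (3) is Karabegov's classification together with the identification of the Karabegov form as $\omega_\hbar = 2\sqrt{-1}\,\omega - \alpha$.

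One small clarification on your residual identity for $J_\alpha$: since both $I$ and $J_\alpha$ are $(0,1)$-forms, the commutative leading pieces satisfy $I\cdot J_\alpha = -J_\alpha\cdot I$ and cancel, so $\tfrac{1}{\hbar}(I\star J_\alpha + J_\alpha\star I)$ reduces exactly to the single Wick contraction, which by the computation in part (2) equals $\sum_{n\ge 2}\tilde R_n^*(J_\alpha)$. Combined with $\delta^{0,1}J_\alpha = 0$, the residual identity is precisely $D_K(J_\alpha) = -\alpha$, and the telescoping you describe boils down to $\delta^{1,0}(\tilde\nabla^{1,0})(\bar\partial\varphi) = \partial\bar\partial\varphi = \alpha$ at the bottom step, with the higher $(\tilde\nabla^{1,0})^k$ terms cancelling against $\nabla^{1,0}$ of their predecessors. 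Your assessment of the main obstacle is accurate: the unperturbed $\alpha=0$ identity is where the genuine work lies.
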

Let $\gamma_\alpha=I_\alpha+2\sqrt{-1}\omega_{i\bar{j}}(dz^i\otimes\bar{y}^j-d\bar{z}^j\otimes y^i)$. Then we can also write the Fedosov abelian connection $D_{F,\alpha}$ as
\begin{equation}\label{equation: Fedosov-connection-gamma-alpha}
 D_{F,\alpha}=\nabla+\frac{1}{\hbar}[\gamma_\alpha,-]_\star, 
\end{equation}
If we put $\omega_\hbar:=2\sqrt{-1}\omega-\alpha$, then the Fedosov equation \eqref{equation: Fedosov-equation-form-1} is equivalent to 
\begin{equation*}\label{equation: Fedosov-equation-gamma-alpha}
 \nabla \gamma_\alpha  + \frac{1}{\hbar} \gamma_\alpha\star \gamma_\alpha + R_\nabla=\omega_\hbar.
\end{equation*}

On the other hand, the complex conjugate of the connection $D_K$ is a flat connection $\overline{D}_K$ on $\overline{\mathcal{W}}_X$. Then
$$D_C := D_K \otimes 1+1\otimes\overline{D}_K$$
is naturally a flat connection on $\mathcal{W}_{X,\mathbb{C}}=\W_{X}\otimes\overline{\W}_X$ such that $D_C|_{\W_X}=D_K$. This gives the classical extension of $D_K$ (where the subscript $C$ stands for ``classical'').
The motivation behind this extension is very simple: since the flat sections with respect to $D_K$ correspond to (local) holomorphic functions on $X$, by adding the anti-holomorphic components in $\mathcal{W}_{X,\mathbb{C}}$, we shall see all the smooth functions. This is indeed the case.
\begin{prop}\label{proposition: classical-flat-section-smooth-function}
There is a one-to-one correspondence between $C^\infty(X)[[\hbar]]$ and the space of flat sections of the Weyl bundle $\W_{X,\mathbb{C}}$ with respect to the flat connection $D_C$. 
\end{prop}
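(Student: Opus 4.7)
My plan is to mirror the proof of Proposition~\ref{proposition:flat-sections-holomorphic-Weyl} by running a Fedosov-style iteration, but now for the full connection $D_C$ on the complexified Weyl bundle. First I would decompose $D_C = -\delta + A$ on $\A^\bullet_X(\W_{X,\C})$, where $A := \nabla + \sum_{n\geq 2}\tilde{R}_n^* + \sum_{n\geq 2}\overline{\tilde{R}_n^*}$. Since $\delta^{-1}$ raises the $(y,\bar y)$-polynomial degree by one, $\nabla$ preserves it, and each $\tilde{R}_n^*$ (resp.\ $\overline{\tilde{R}_n^*}$) raises it by $n-1 \geq 1$, the composition $\delta^{-1} A$ strictly raises the polynomial degree and is therefore contracting in the $(y,\bar y)$-adic topology on $\W_{X,\C}$.

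For existence, I would fix $f \in C^\infty(X)[[\hbar]]$ and recast the system $\sigma(s)=f,\ D_C s = 0$ as a fixed-point problem. Using the standard Hodge-type identity $\delta\delta^{-1} + \delta^{-1}\delta + \pi_0 = \mathrm{id}$, where $\pi_0$ denotes projection onto the $(y,\bar{y})$-constant part, the condition $D_C s = 0$ rewrites (after applying $\delta^{-1}$) as
\[
s = f + \delta^{-1}(A s).
\]
The iteration $s_0 = f$, $s_{n+1} = f + \delta^{-1}(A s_n)$ converges in the adic topology, and its limit $O_f^C := \sum_{k \geq 0}(\delta^{-1} A)^k f$ is a section with symbol $f$. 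For injectivity of $\sigma$ on $\Gamma^{\mathrm{flat}}(X,\W_{X,\C})$, the same manipulation applied to a flat section $s$ with $\sigma(s)=0$ gives $s = \delta^{-1}(A s) = (\delta^{-1} A)^k s$ for every $k$, and contractivity forces $s=0$.

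The step I expect to be the main obstacle is verifying that the formal limit $O_f^C$ actually satisfies $D_C O_f^C = 0$, rather than merely the weaker relation built into the fixed-point equation. The standard Fedosov bootstrap should handle this: letting $\eta := D_C O_f^C$, applying $D_C$ once more and invoking $D_C^2 = 0$ produces a recursion of the form $\eta = \delta^{-1}(\cdots \eta \cdots)$ that is again strictly contracting in the polynomial filtration, forcing $\eta = 0$ termwise. The flatness $D_C^2 = 0$ used here is not an additional input: it follows from the already-established flatness of $D_K$ on $\W_X$ and of $\overline{D}_K$ on $\overline{\W}_X$, via the tensor decomposition $D_C = D_K \otimes 1 + 1 \otimes \overline{D}_K$.
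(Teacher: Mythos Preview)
Your proposal is correct and follows essentially the same route as the paper. Your fixed-point equation $s = f + \delta^{-1}(As)$ is exactly the paper's iterative equation $J_f = f + \delta^{-1}(D_C+\delta)(J_f)$, and both arguments rest on the polynomial-degree filtration; the paper simply cites Fedosov's Theorem~3.3 rather than spelling out the contraction and the flatness bootstrap as you do.
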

\begin{proof}
Given any smooth function $f$, we need to show that there exists a unique $J_f$ such that  $\sigma(J_f)=f$ and $D_C(J_f)=0$, where $\sigma$ is the symbol map. The proof is very similar to that of Theorem 3.3 in Fedosov \cite{Fed}, so we will be brief. 
For the uniqueness of $J_f$, consider a nonzero section $s$ of $\W_{X,\mathbb{C}}$ with $\sigma(s)=0$. Let $s_0$ be the terms in $s$ of the smallest weight. Then $\delta(s_0)$ is nonzero and of smaller weight, so $s$ cannot be flat.
For the existence of $J_f$, consider the filtration on $\A_X^\bullet(\mathcal{W}_{X,\mathbb{C}})$ induced by the polynomial degrees of terms in $\W_{X,\C}$. The fact that the fiberwise de Rham differential $\delta$ has cohomology concentrated in degree $0$ implies the existence of $J_f$, which is uniquely determined by the iterative equation
\begin{equation}\label{equation: classical-flat-section-iteration-equation-no-type}
J_f=f+\delta^{-1}(D_C+\delta)(J_f).
\end{equation}
\end{proof}
We now give explicit formula for some terms in $J_f$. The first observation from the iterative equation \eqref{equation: classical-flat-section-iteration-equation-no-type} is that 
\begin{equation*}\label{equation: classical-flat-section-n-0-term}
(J_f)_{n,0}=(\tilde{\nabla}^{1,0})^n(f).
\end{equation*}

\begin{prop}\label{proposition: flat-section-0-1-part-flat-connection}
Let $D_C^{0,1}$ denote the $(0,1)$-part of the flat connection $D_C$. Then there is a one-to-one correspondence between $\ker(D_C^{0,1})$ and smooth sections of the holomorphic Weyl bundle $\mathcal{W}_X$.
\end{prop}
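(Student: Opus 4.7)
The plan is to mirror Fedosov's argument used in the proof of Proposition \ref{proposition: classical-flat-section-smooth-function}, but now filtering by $\bar y$-polynomial degree rather than total polynomial degree, and using only the antiholomorphic part of $D_C$. First I would unwind the definition of $D_C^{0,1}$. The complex conjugate connection on $\overline{\mathcal{W}}_X$ reads $\overline{D}_K = \nabla - \delta^{0,1} + \sum_{n\geq 2}\overline{\tilde R_n^*}$, and $\overline{\tilde R_n^*}$ is $(1,0)$-form valued (its de Rham part is $dz^l$). Taking the $(0,1)$-component of $D_C = D_K \otimes 1 + 1 \otimes \overline{D}_K$ and combining the two $\nabla^{0,1}$ contributions into the Levi-Civita $(0,1)$-connection on the tensor product $\mathcal{W}_{X,\C}$, one obtains
$$D_C^{0,1} = \nabla^{0,1} - \delta^{0,1} + \sum_{n\geq 2}\tilde R_n^*,$$
where each $\tilde R_n^*$ only differentiates in $y$ and so preserves $\bar y$-degree.

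Next I would set up the bijection. Define the ``antiholomorphic symbol'' $\pi:\Gamma^\infty(\mathcal{W}_{X,\C}) \to \Gamma^\infty(\mathcal{W}_X)$ to be the projection to the $\bar y$-degree-zero component, and claim the restriction $\pi|_{\ker D_C^{0,1}}$ is a bijection. Decomposing any section as $J = \sum_{k \geq 0} J^{(k)}$ by $\bar y$-degree, $\delta^{0,1}$ lowers the index by $1$, $(\delta^{0,1})^{-1}$ raises it by $1$, and $\nabla^{0,1}$, $\tilde R_n^*$ preserve it. For injectivity, suppose $J \in \ker D_C^{0,1}$ has $\pi(J)=0$ and let $k_0 \geq 1$ be the minimal $\bar y$-degree that appears. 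Extracting the $\bar y$-degree $k_0 - 1$ part of $D_C^{0,1}(J)=0$ yields $\delta^{0,1}(J^{(k_0)})=0$; but on $0$-forms $\ker\delta^{0,1}$ is precisely the $\bar y$-free sections, forcing $J^{(k_0)}=0$, a contradiction. For surjectivity, given $s\in\Gamma^\infty(\mathcal{W}_X)$ I construct $J_s$ iteratively by
$$J_s = s + (\delta^{0,1})^{-1}\Bigl(\nabla^{0,1} + \sum_{n\geq 2}\tilde R_n^*\Bigr)(J_s),$$
i.e.\ $J_s^{(0)}=s$ and $J_s^{(k)} = (\delta^{0,1})^{-1}(\nabla^{0,1} + \sum_n \tilde R_n^*)(J_s^{(k-1)})$. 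This recursion closes level by level in $\bar y$-degree, and within each level the coefficient of any given $y$-monomial of degree $D$ picks up only $\tilde R_n^*$ with $n \le D+1$, so the sums are finite.

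The main obstacle is verifying that the section $J_s$ produced by the iteration actually satisfies $D_C^{0,1}(J_s)=0$, not merely the scalar-valued equation obtained after applying $\delta^{0,1}$. Setting $r := D_C^{0,1}(J_s)$, by construction the $\bar y$-degree zero component of $r$ vanishes. Then using $(D_C^{0,1})^2 = 0$ --- which is the $\alpha=0$ specialization of the flatness statement of Theorem \ref{theorem: Fedosov-connection-general}, or equivalently follows because $D_C = D_K \otimes 1 + 1 \otimes \overline{D}_K$ is flat and bidegree is preserved --- one finds $D_C^{0,1}(r)=0$. Since $r$ lies in $\ker D_C^{0,1}$ with vanishing $\pi$-image, the already-established uniqueness forces $r=0$. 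The two maps $s \mapsto J_s$ and $J\mapsto \pi(J)$ are then inverse to each other, yielding the desired bijection.
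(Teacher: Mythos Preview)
Your overall strategy matches the paper's: filter by anti-holomorphic Weyl degree, use the projection $\pi_{*,0}:\mathcal{W}_{X,\C}\to\mathcal{W}_X$, and imitate Fedosov's iteration. The paper's proof is terse and simply refers to \cite[Theorem~3.3]{Fed}; you have attempted to spell out the details, and most of them are correct.

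There is, however, a genuine gap in your surjectivity argument. The uniqueness you established applies to $0$-forms: on a $0$-form section of $\mathcal{W}_{X,\C}$ with positive $\bar y$-degree, $\delta^{0,1}$ is injective, and your minimal-degree argument goes through. But $r=D_C^{0,1}(J_s)$ is a $(0,1)$-form, and on $(0,1)$-forms $\ker\delta^{0,1}$ is strictly larger than the $\bar y$-free part (for instance $\bar y^j\,d\bar z^j$ is annihilated by $\delta^{0,1}$). So the sentence ``the already-established uniqueness forces $r=0$'' does not follow from what you proved.

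The standard Fedosov repair is to record one additional fact about $r$: since $J_s-s=(\delta^{0,1})^{-1}(\cdots)$, a short computation (using $((\delta^{0,1})^{-1})^2=0$) shows that $(\delta^{0,1})^{-1}r=0$. Combining this gauge condition with the Hodge-type identity $\delta^{0,1}(\delta^{0,1})^{-1}+(\delta^{0,1})^{-1}\delta^{0,1}=\mathrm{id}$ on pieces with $q_1+q_2>0$ and with $D_C^{0,1}r=0$, one obtains
\[
r=(\delta^{0,1})^{-1}\delta^{0,1}r=(\delta^{0,1})^{-1}\Bigl(\nabla^{0,1}+\sum_{n\ge 2}\tilde R_n^*\Bigr)(r),
\]
whose right-hand side strictly raises $\bar y$-degree. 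Induction from $r^{(0)}=0$ then gives $r=0$. This is precisely the mechanism in Fedosov's Theorem~3.3 that the paper is invoking.
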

\begin{proof}
Consider the projection map
$$
\pi_{*,0}:\mathcal{W}_{X,\mathbb{C}}\rightarrow\mathcal{W}_X.
$$
and the filtration induced by the degrees of anti-holomorphic components on the Weyl bundle. Then the statement of this proposition is simply that there exists a unique section of $\mathcal{W}_{X,\mathbb{C}}$ annihilated by $D_C^{0,1}$, with a prescribed leading term, the proof of which is again very similar to that  of \cite[Theorem 3.3]{Fed}. 
\end{proof}
Proposition \ref{proposition: flat-section-0-1-part-flat-connection} and the uniqueness of $J_f$ implies the following corollary:
\begin{cor}\label{corollary: iteration-equation-classical-flat-section-function}
$J_f$ is determined uniquely and iteratively by the following conditions:
\begin{itemize}
\item The $(k,0)$-component of $J_f$ is given by $(\tilde{\nabla}^{1,0})^k(f)$.
\item For $n\geq 0$, we have
\begin{equation}\label{equation: classical-flat-section-iteration-equation-with-types}
(J_f)_{*,n+1} = ((\delta^{0,1})^{-1}\circ(D_C^{0,1}+\delta^{0,1}))((J_f)_{*,n}).
\end{equation}
\end{itemize}
\end{cor}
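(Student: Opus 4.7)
The plan is to decompose the equation $D_C(J_f) = 0$ into its $(1,0)$ and $(0,1)$ parts, and extract each of the two bullets from one of the resulting flatness conditions. From $D_C = D_K \otimes 1 + 1 \otimes \overline{D}_K$ one finds
$$D_C^{1,0} = \nabla^{1,0} - \delta^{1,0} + \sum_{n \geq 2}\tilde{\bar R}_n^*, \qquad D_C^{0,1} = \nabla^{0,1} - \delta^{0,1} + \sum_{n \geq 2}\tilde R_n^*,$$
where $\tilde{\bar R}_n^*$ denotes the complex-conjugate Kapranov derivations coming from $\overline{D}_K$; these are built out of multiplication by $\bar y$'s and one $\bar y$-derivative, and so \emph{strictly raise} the anti-holomorphic Weyl degree (by $n-1 \geq 1$), while each $\tilde R_n^*$ preserves the anti-holomorphic Weyl degree.

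For the first bullet I project the equation $D_C^{1,0}(J_f) = 0$ onto its form-$(1,0)$, Weyl-$(*, 0)$ piece. The terms $\tilde{\bar R}_n^*$ contribute nothing to pure-holomorphic-Weyl output, so the equation collapses to $\nabla^{1,0}(J_f)_{k, 0} = \delta^{1,0}(J_f)_{k+1, 0}$ for all $k \geq 0$. Applying $(\delta^{1,0})^{-1}$, which satisfies $(\delta^{1,0})^{-1}\delta^{1,0} = \mathrm{id}$ on form-$0$, holomorphic-Weyl-positive components, gives the recursion $(J_f)_{k+1, 0} = \tilde\nabla^{1,0}(J_f)_{k, 0}$; together with the base case $(J_f)_{0, 0} = \sigma(J_f) = f$, induction yields the first bullet.

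For the second bullet I rewrite $D_C^{0,1}(J_f) = 0$ as $\delta^{0,1}(J_f) = (D_C^{0,1} + \delta^{0,1})(J_f) = (\nabla^{0,1} + \sum_{n \geq 2}\tilde R_n^*)(J_f)$, and restrict to form-$(0,1)$, Weyl-$(*, n)$, taking advantage of the fact that each operator on the right preserves anti-holomorphic Weyl degree while $\delta^{0,1}$ on the left lowers it by one. This gives $\delta^{0,1}(J_f)_{*, n+1} = (D_C^{0,1} + \delta^{0,1})(J_f)_{*, n}$, and applying $(\delta^{0,1})^{-1}$ via the analogous Poincaré-type identity delivers the recursion \eqref{equation: classical-flat-section-iteration-equation-with-types}. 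Uniqueness then follows because the two bullets reconstruct $J_f$ component by component, matching the uniqueness already established in Proposition \ref{proposition: classical-flat-section-smooth-function}. The main technical delicacy is the careful bookkeeping of how each derivation in $D_C$ shifts form and Weyl bi-degrees, together with verification of the restricted homotopy identities for $(\delta^{1,0})^{-1}$ and $(\delta^{0,1})^{-1}$ on the appropriate subspaces; these are routine adaptations of the $\delta$-Poincaré lemma already used in the proof of Proposition \ref{proposition: classical-flat-section-smooth-function}.
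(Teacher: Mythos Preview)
Your argument is correct. The paper's own justification is terser: it simply cites Proposition~\ref{proposition: flat-section-0-1-part-flat-connection} (uniqueness of a section in $\ker D_C^{0,1}$ with prescribed projection to $\mathcal W_X$) together with the uniqueness of $J_f$, and obtains the first bullet as an ``observation'' from the full iteration equation~\eqref{equation: classical-flat-section-iteration-equation-no-type}. Your approach unpacks exactly the same content: rather than invoke the abstract Fedosov-type argument of Proposition~\ref{proposition: flat-section-0-1-part-flat-connection}, you split $D_C(J_f)=0$ into its $(1,0)$ and $(0,1)$ form-components and read off each recursion by explicit degree-tracking and the restricted $\delta$-Poincar\'e identities. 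This is essentially the same proof, written out in more detail; the only minor variation is that you deduce the first bullet from $D_C^{1,0}(J_f)=0$ directly, whereas the paper extracts it from the combined iteration~\eqref{equation: classical-flat-section-iteration-equation-no-type}.
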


\begin{lem}\label{lemma: Taylor-expansion-k-1-terms}
Let $f$ be a smooth function on $X$, then the $(1,k)$- and $(k,1)$-components of $J_f$ are given respectively by $(J_f)_{1,k}  = (\tilde{\nabla}^{0,1})^k\circ\tilde{\nabla}^{1,0})(f)$ and $(J_f)_{k,1}  = ((\tilde{\nabla}^{1,0})^k\circ\tilde{\nabla}^{0,1})(f)$.
\end{lem}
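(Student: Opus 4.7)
My plan is to use the iterative equation \eqref{equation: classical-flat-section-iteration-equation-with-types} from Corollary \ref{corollary: iteration-equation-classical-flat-section-function} together with its $(1,0)$-direction analogue, and to observe that at the first-off-diagonal bidegrees $(1,k)$ and $(k,1)$ the curvature contributions vanish for a simple structural reason.

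For $(J_f)_{1,k}$, I would first note that applying the existing $(0,1)$-direction iteration at $p=0$ forces $(J_f)_{0,q}=(\tilde{\nabla}^{0,1})^q(f)$, since each term $\tilde{R}_m^*((J_f)_{1-m,q-1})$ with $m\geq 2$ would require a section of negative holomorphic polynomial degree. Next, restricting the iteration to $p=1$ and tracking bidegrees: since $\tilde{R}_m^*$ raises the holomorphic polynomial degree by $m-1$, the only surviving curvature index is $m=2$ (as $2-m\geq 0$ forces $m=2$), and the resulting contribution $\tilde{R}_2^*\bigl((J_f)_{0,n}\bigr)=\tilde{R}_2^*\bigl((\tilde{\nabla}^{0,1})^n(f)\bigr)$ vanishes because $\tilde{R}_2^*$ differentiates in $y^m$ while $(\tilde{\nabla}^{0,1})^n(f)$ is purely polynomial in $\bar{y}^j$. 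Hence $(J_f)_{1,n+1}=\tilde{\nabla}^{0,1}((J_f)_{1,n})$, and induction from $(J_f)_{1,0}=\tilde{\nabla}^{1,0}(f)$ gives the first formula.

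For $(J_f)_{k,1}$, I would first establish the complex-conjugate analogues of Proposition \ref{proposition: flat-section-0-1-part-flat-connection} and Corollary \ref{corollary: iteration-equation-classical-flat-section-function}. A direct unpacking of $D_C=D_K\otimes 1+1\otimes\overline{D}_K$ shows $D_C^{1,0}=\nabla^{1,0}-\delta^{1,0}+\sum_{m\geq 2}\overline{\tilde{R}_m^*}$ (the $-\delta^{1,0}$ coming from the $-\delta^{0,1}$ term in $\overline{D}_K$ under complex conjugation), and the same filtration-plus-Hodge argument yields a $(1,0)$-direction iteration
$$(J_f)_{n+1,*}=(\delta^{1,0})^{-1}\Bigl(\nabla^{1,0}+\sum_{m\geq 2}\overline{\tilde{R}_m^*}\Bigr)((J_f)_{n,*}),$$
with base case $(J_f)_{0,k}=(\tilde{\nabla}^{0,1})^k(f)$ obtained above. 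The identical analysis then shows that the only surviving curvature contribution at bidegree $(n+1,1)$ is $\overline{\tilde{R}_2^*}\bigl((\tilde{\nabla}^{1,0})^n(f)\bigr)$, which vanishes because $\overline{\tilde{R}_2^*}$ differentiates in $\bar{y}^l$ while the input has no $\bar{y}$ factors. Induction from $(J_f)_{0,1}=\tilde{\nabla}^{0,1}(f)$ completes the proof.

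The main obstacle will be carefully setting up the $(1,0)$-direction iteration and handling the bidegree bookkeeping for the Hodge-type identity $(\delta^{1,0})^{-1}\delta^{1,0}=\mathrm{id}$ on form-degree-zero sections (which uses the fact that $(\delta^{1,0})^*$ annihilates such sections). This step is purely formal once $D_C^{1,0}$ is identified; the conceptual content of the lemma is captured entirely by the vanishing of the $\tilde{R}_2^*$-type curvature terms at first-off-diagonal bidegree, which follows from the pure-type structure of the base jets $(J_f)_{p,0}$ and $(J_f)_{0,q}$.
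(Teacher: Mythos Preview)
Your proposal is correct and follows essentially the same approach as the paper: both use the iterative equation \eqref{equation: classical-flat-section-iteration-equation-with-types} and observe that the curvature terms $\tilde{R}_m^*$ cannot contribute at holomorphic degree $1$ (the paper phrases this as ``$\tilde{R}_n^*((J_f)_{*,k})$ has holomorphic degree $\geq 2$'', while you decompose it into the degree count $p=2-m$ plus the vanishing of $\tilde{R}_2^*$ on purely antiholomorphic inputs). Your treatment of $(J_f)_{k,1}$ via the explicit conjugate $(1,0)$-iteration is exactly what the paper intends by ``similarly''; note that one cannot obtain the formula $(J_f)_{k,1}=(\tilde{\nabla}^{1,0})^k\circ\tilde{\nabla}^{0,1}(f)$ directly from the $(0,1)$-iteration at antiholomorphic degree $1$, since there the $\tilde{R}_m^*$ terms do contribute, so the conjugate iteration is genuinely needed.
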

\begin{proof}
We will only prove the formula for $(J_f)_{1,k}$ by induction because the other can be proven similarly. It is obvious for $k=0$. Suppose that the statement is valid for $k\leq n$. Then from equation \eqref{equation: classical-flat-section-iteration-equation-with-types}, we have
\begin{align*}
(J_f)_{1,k+1} & = ((\delta^{0,1})^{-1}\circ(D_C^{0,1}+\delta^{0,1}))((J_f)_{*,k})\\
& = ((\delta^{0,1})^{-1}\circ(\nabla^{0,1}+\sum_{n\geq 2}\tilde{R}_n^*))((J_f)_{*,k})\\
& = ((\delta^{0,1})^{-1}\circ\nabla^{0,1})((J_f)_{1,k}),
\end{align*}
where the last equality follows from the fact that $\tilde{R}_n^*((J_f)_{*,k})$ has holomorphic degree greater than or equal to $2$.
\end{proof}

\subsection{Sections of $\W_{X,\C}$ associated to closed $(1,1)$-forms}
\

We consider here a section of the Weyl bundle associated to a closed $(1,1)$-form on $X$. We use the symplectic form as an example: Let $\varphi$ be a (locally defined) function  on $X$ such that $\partial\bar{\partial}\varphi=\omega$, which is unique up to the sum of a purely holomorphic and a purely anti-holomorphic function. It follows that the components of $J_{\varphi}$ of mixed type only depend on $\omega$. We denote those mixed terms in $J_\varphi$ by $\Phi_\omega$:
\begin{equation*}\label{equation: Phi}
\Phi_\omega := \sum_{i,j\geq 1}(J_\varphi)_{i,j}.
\end{equation*}

It is clear that $(\Phi_\omega)_{1,1}=\frac{\partial^2\varphi}{\partial z^i\partial\bar{z}^j}y^i\bar{y}^j=\omega_{i\bar{j}}y^i\bar{y}^j$. 
\begin{lem}\label{lemma: Taylor-expansion-Kahler-potential-vanishing-k-1-terms}
We have $(\Phi_\omega)_{1,k}=(\Phi_\omega)_{k,1}=0$ for $k\geq 2$.
\end{lem}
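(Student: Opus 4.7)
By Lemma \ref{lemma: Taylor-expansion-k-1-terms} applied to $f=\varphi$, the $(1,k)$-component is
\[
(J_\varphi)_{1,k}=\bigl((\tilde{\nabla}^{0,1})^k\circ\tilde{\nabla}^{1,0}\bigr)(\varphi).
\]
First I would verify the base case $k=1$: since $y^i$ corresponds to the holomorphic $1$-form $dz^i$ and on a K\"ahler manifold the Chern/Levi-Civita connection satisfies $\nabla^{0,1}(dz^i)=0$, one gets $\tilde{\nabla}^{1,0}(\varphi)=\partial_i\varphi\cdot y^i$ and then $\tilde{\nabla}^{0,1}(\partial_i\varphi\cdot y^i)=\partial_i\partial_{\bar j}\varphi\,y^i\bar y^j=\omega_{i\bar j}y^i\bar y^j$, reproducing $(\Phi_\omega)_{1,1}$. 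Consequently, for $k\geq 2$,
\[
(J_\varphi)_{1,k}=(\tilde{\nabla}^{0,1})^{k-1}\bigl(\omega_{i\bar j}y^i\bar y^j\bigr),
\]
so the whole claim reduces to the single identity $\tilde{\nabla}^{0,1}(\omega_{i\bar j}y^i\bar y^j)=0$.

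The observation that does the work is that $\omega_{i\bar j}y^i\bar y^j$ is essentially the Hermitian metric, viewed inside $T^{*(1,0)}X\otimes T^{*(0,1)}X\subset\widehat{\Sym}T^*X_\C$, and the Hermitian metric is parallel for the Chern/Levi-Civita connection. Concretely, using $\nabla^{0,1}(y^i)=0$ and $\nabla^{0,1}(\bar y^j)=-\overline{\Gamma_{lk}^j}\,d\bar z^l\otimes\bar y^k$, I would expand
\[
\nabla^{0,1}\bigl(\omega_{i\bar j}y^i\bar y^j\bigr)=\partial_{\bar k}\omega_{i\bar j}\,d\bar z^k\otimes y^i\bar y^j-\omega_{i\bar j}\overline{\Gamma_{lk}^j}\,d\bar z^l\otimes y^i\bar y^k,
\]
and then invoke the K\"ahler identity $\omega_{i\bar j}\overline{\Gamma_{lk}^j}=\partial_{\bar l}\omega_{i\bar k}$ (a direct consequence of $\Gamma_{ij}^k=g^{k\bar m}\partial_i g_{j\bar m}$ together with $\omega_{i\bar j}\propto g_{i\bar j}$) to turn the second term into $\partial_{\bar l}\omega_{i\bar k}\,d\bar z^l\otimes y^i\bar y^k$. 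After relabeling dummy indices this cancels the first term exactly, giving $\nabla^{0,1}(\omega_{i\bar j}y^i\bar y^j)=0$, hence a fortiori $\tilde{\nabla}^{0,1}(\omega_{i\bar j}y^i\bar y^j)=0$.

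The companion statement $(\Phi_\omega)_{k,1}=0$ follows by the analogous identity $\tilde{\nabla}^{1,0}(\omega_{i\bar j}y^i\bar y^j)=0$, obtained either by complex conjugation or by running the same calculation with the roles of $y$ and $\bar y$ (and of $\nabla^{1,0}$ and $\nabla^{0,1}$) interchanged. The only real obstacle is index bookkeeping: one must use that on a K\"ahler manifold the Christoffel symbols live only in the pure holomorphic (or anti-holomorphic) block, so that both $\nabla^{0,1}(y^i)$ and $\nabla^{1,0}(\bar y^j)$ vanish and only the \emph{other} Weyl factor contributes a Christoffel term; once this is organized, the K\"ahler identity delivers the cancellation immediately.
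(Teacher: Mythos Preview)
Your proof is correct and follows the same strategy as the paper: both invoke Lemma~\ref{lemma: Taylor-expansion-k-1-terms} to reduce the claim to the vanishing of $\tilde{\nabla}^{1,0}(\omega_{i\bar j}y^i\bar y^j)$ (respectively $\tilde{\nabla}^{0,1}$), which the paper dispatches in one line by citing that the K\"ahler form is parallel for the Levi-Civita connection, whereas you spell out that parallelism explicitly via the Christoffel identity $\omega_{i\bar j}\overline{\Gamma^j_{lk}}=\partial_{\bar l}\omega_{i\bar k}$. The only cosmetic difference is that the paper treats the $(k,1)$ case first and you treat the $(1,k)$ case first; each declares the other ``similar.''
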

\begin{proof}
By Lemma \ref{lemma: Taylor-expansion-k-1-terms}, we have
$(J_{\varphi})_{k,1}=((\tilde{\nabla}^{1,0})^k\circ \tilde{\nabla}^{0,1})(\varphi)=(\tilde{\nabla}^{1,0})^{k-1}(\omega_{i\bar{j}}y^i\bar{y}^j)=0$,
where the last equality follows from the fact that the K\"ahler form is parallel with respect to the Levi-Civita connection $\nabla$. The vanishing for the $(1,k)$ terms is similar. 
\end{proof}
For later computations, we give a formula for $(\Phi_\omega)_{n,2}, n\geq 2$. Using Corollary \ref{corollary: iteration-equation-classical-flat-section-function} and the fact that $(J_\varphi)_{n,1}=0$ for $n\geq 2$, we have
\begin{align*}
(\Phi_\omega)_{n,2}=(J_\varphi)_{n,2}
& = (\delta^{0,1})^{-1}\left(\nabla^{0,1}((J_\varphi)_{n,1}) + \sum_{i=2}^n\tilde{R}_i^*((J_\varphi)_{n-i+1,1})\right)\\
& = (\delta^{0,1})^{-1} \left(\nabla^{0,1}((J_\varphi)_{n,1})+\tilde{R}_n((J_\varphi)_{1,1}) \right)\\
& = (\delta^{0,1})^{-1}\circ\tilde{R}_n((J_\varphi)_{1,1})\\
& = (\delta^{0,1})^{-1}\left(\tilde{R}_n^*(\omega^{i\bar{j}}y^i\bar{y}^j)\right)\\
& = (\delta^{0,1})^{-1}\left(R^k_{i_1\cdots i_n,\bar{l}}d\bar{z}^l\otimes(y^{i_1}\cdots y^{i_n})\frac{\partial}{\partial y^k}(\omega_{i\bar{j}}y^i\bar{y}^j)\right)\\
& = (\delta^{0,1})^{-1}\left(R^k_{i_1\cdots i_n,\bar{l}}\omega_{k\bar{j}}d\bar{z}^l\otimes(y^{i_1}\cdots y^{i_n}\bar{y}^j)\right)
\end{align*}
From the above computation, we obtain
\begin{equation}\label{equation: relation-Phi-I}
\delta^{0,1}((\Phi_\omega)_{n,2})=R^k_{i_1\cdots i_n,\bar{l}}\omega_{k\bar{j}}d\bar{z}^l\otimes(y^{i_1}\cdots y^{i_n}\bar{y}^j)=\frac{\sqrt{-1}}{2}I_{n}\text{ for $n\geq 2$}. 
\end{equation}
In general, let $\alpha$ be a representative of a formal cohomology class in $\hbar H^2_{dR}(X)[[\hbar]]$ of type $(1,1)$, and let $\varphi$ be a (local) potential of $\alpha$. We set $\Phi_{\alpha} := \sum_{i,j\geq 1}(J_\varphi)_{i,j}$. Then we have
$$
J_\alpha=\delta^{0,1}\left(\sum_{k\geq 1}(\Phi_\alpha)_{k,1}\right).
$$
The following theorem, whose proof will be given in Appendix \ref{appendix: proof-theorem-jets-flat-section}, describes the relation between the classical and quantum flat sections:
\begin{thm}\label{thm:main-thm-formal-Toeplitz-Fedosov-equation}
	Let $\Phi:=2\sqrt{-1}\left(-\omega_{i\bar{j}}y^i\bar{y}^j+\Phi_\omega\right)-\Phi_\alpha$.
	Given a smooth function $f$, let $O_f$ be the flat section under the Fedosov connection $D_{F,\alpha}$ associated to $f$, i.e., $D_{F,\alpha}(O_f)=0$. Then we have
\begin{equation}\label{equation: formal-Toeplitz-operator-equals-flat-section}
J_f\cdot e^{\Phi/\hbar}=e^{\Phi/\hbar}\star O_f.
\end{equation}
\end{thm}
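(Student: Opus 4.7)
The approach is to reformulate the claimed identity as an assertion of uniqueness of a Fedosov flat section. Multiplying \eqref{equation: formal-Toeplitz-operator-equals-flat-section} on the left by $e^{-\Phi/\hbar}$ in the star product yields the equivalent form $O_f=\tilde O_f$, where
$$
\tilde O_f := e^{-\Phi/\hbar}\star\bigl(J_f\cdot e^{\Phi/\hbar}\bigr)\in\W_{X,\C}^+.
$$
Since $O_f$ is characterized as the unique $D_{F,\alpha}$-flat section with symbol $f$, it suffices to verify that (a)\ $\sigma(\tilde O_f)=f$ and (b)\ $D_{F,\alpha}(\tilde O_f)=0$. For (a), note that $\Phi$ has no $(y,\bar y)$-constant term (its lowest-order contribution is bidegree $(1,1)$), so $e^{\pm\Phi/\hbar}$ lives in $\W_{X,\C}^+$ with $(y,\bar y)$-constant part equal to $1$; since the symbol is multiplicative in both the commutative and star products at the level of $(y,\bar y)$-constants, this forces $\sigma(\tilde O_f)=1\cdot f\cdot 1=f$.

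For condition (b), the strategy is to exploit both available descriptions of the Fedosov connection. Using $D_{F,\alpha}=\nabla+\tfrac{1}{\hbar}[\gamma_\alpha,-]_\star$ from \eqref{equation: Fedosov-connection-gamma-alpha} shows that $D_{F,\alpha}$ is a derivation of $\star$, giving
$$
D_{F,\alpha}(\tilde O_f)=D_{F,\alpha}(e^{-\Phi/\hbar})\star(J_f\cdot e^{\Phi/\hbar})+e^{-\Phi/\hbar}\star D_{F,\alpha}(J_f\cdot e^{\Phi/\hbar}).
$$
To handle the pointwise product inside the second factor, switch to the alternative decomposition $D_{F,\alpha}=\nabla-\delta+\tfrac{1}{\hbar}[I_\alpha,-]_\star$, where $\nabla$ and $\delta$ are derivations of the commutative product. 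The classical flatness $D_C(J_f)=0$ (Proposition \ref{proposition: classical-flat-section-smooth-function}) replaces $(\nabla-\delta)(J_f)$ by $-\sum_n(\tilde R_n^*+\overline{\tilde R_n^*})(J_f)$, and the key geometric relation \eqref{equation: relation-Phi-I}, together with its counterpart relating $\delta^{0,1}\Phi_\alpha$ to $J_\alpha$, converts the residual terms into precisely the expression produced by the star-commutator $\tfrac{1}{\hbar}[I_\alpha,-]_\star$ applied to $e^{\pm\Phi/\hbar}$.

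The main obstacle will be bookkeeping the interplay between the pointwise and star products in the above expansion. I expect the cleanest route is to first establish an auxiliary identity of the form $D_{F,\alpha}(e^{\Phi/\hbar})=e^{\Phi/\hbar}\star\eta$ for an explicit one-form $\eta$, by computing $\tfrac{1}{\hbar}I_\alpha\star e^{\Phi/\hbar}$ directly (exponentials behave very nicely under the Wick product because each $\partial_{y^i}$ or $\partial_{\bar y^j}$ simply brings down a polynomial factor divided by $\hbar$, which cancels the $\hbar^k$ in the star-product formula). The corresponding identity $D_{F,\alpha}(e^{-\Phi/\hbar})=-\eta\star e^{-\Phi/\hbar}$ follows by taking star-inverses. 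Substituting both back into the Leibniz expansion, the $\eta$-contributions cancel, while the remaining cross terms involving $J_f$ can be organized by induction on the $(y,\bar y)$-bidegree using Corollary \ref{corollary: iteration-equation-classical-flat-section-function} and Lemma \ref{lemma: Taylor-expansion-k-1-terms}, collapsing to zero and thereby establishing $D_{F,\alpha}(\tilde O_f)=0$.
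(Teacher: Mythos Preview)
Your overall strategy---define $\tilde O_f:=e^{-\Phi/\hbar}\star(J_f\cdot e^{\Phi/\hbar})$ and show it is $D_{F,\alpha}$-flat with symbol $f$---is exactly what the paper does, and your treatment of the symbol check (a) is correct.

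The gap is in (b), specifically the claim that ``the $\eta$-contributions cancel.'' After the $\star$-Leibniz expansion you obtain
\[
D_{F,\alpha}(\tilde O_f)=-\eta\star\tilde O_f+e^{-\Phi/\hbar}\star D_{F,\alpha}\bigl(J_f\cdot e^{\Phi/\hbar}\bigr),
\]
and you then switch to $\nabla-\delta+\tfrac1\hbar[I_\alpha,-]_\star$ on the second factor. But $\tfrac1\hbar[I_\alpha,-]_\star$ is \emph{not} a derivation of the pointwise product, so $D_{F,\alpha}(J_f\cdot e^{\Phi/\hbar})$ does not split into a $J_f$-piece plus $J_f\cdot D_{F,\alpha}(e^{\Phi/\hbar})$. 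In particular, the term $J_f\cdot\bigl(e^{\Phi/\hbar}\star\eta\bigr)$ that would be needed to cancel against $-\eta\star\tilde O_f$ simply does not arise; what you get instead is $J_f\cdot(\nabla-\delta)(e^{\Phi/\hbar})$ plus $\tfrac1\hbar[I_\alpha,J_f\cdot e^{\Phi/\hbar}]_\star$, and neither of these is $e^{\Phi/\hbar}\star\eta$ multiplied (in either sense) by $J_f$. The proposed induction on bidegree via Corollary~\ref{corollary: iteration-equation-classical-flat-section-function} and Lemma~\ref{lemma: Taylor-expansion-k-1-terms} does not supply the missing algebraic identity, because those results describe $J_f$ itself, not the interaction of $\cdot$ and $\star$.

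The paper resolves this by a type decomposition that you are missing. Any monomial of $J_f$ factors as $A\cdot B$ with $A\in\W_X$ and $B\in\overline{\W}_X$; the Wick formula then gives the crucial identities $C\star A=C\cdot A$ and $B\star C=B\cdot C$ for arbitrary $C$, which let one pass freely between the two products on such factors. Using this, the paper shows $(A\cdot B)\cdot e^{\Phi/\hbar}=e^{\Phi/\hbar}\star(B_q\star A)$, reducing the flatness of $\tilde O_f$ to (i) the case $A\in\W_X$, where $A_q=A$ and $D_{F,\alpha}|_{\W_X}=D_C|_{\W_X}$, and (ii) the case $B\in\overline{\W}_X$, handled by a gauge-equivalence lemma showing that conjugation by $e^{\Phi/\hbar}$ carries $D_{F,\alpha}^{0,1}$ to $\nabla^{0,1}-\delta^{0,1}$ (this is the precise form your ``auxiliary $\eta$-identity'' should take). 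With this structural input the computation closes; without it your Leibniz argument does not.
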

The germ $(J_f)_{x_0}$ is the Taylor expansion of $f$ at $x_0$ under K\"ahler normal coordinates and their complex conjugates, which is a classical object. On the other hand, the flat section $O_f$ is a quantum object which we will explain in Section \ref{section: micri-local-Berezin-Toeplitz}.  In particular, for holomorphic functions $f\in\mathcal{O}(U)$, since $D_K=D_C|_{\W_X}=D_{K,\alpha}|_{\W_X}$, we must have $J_f=O_f$ (which can also be seen from equation \eqref{equation: formal-Toeplitz-operator-equals-flat-section}). This says that holomorphic functions {\em do not receive any quantum corrections}. 




\section{Bargmann-Fock sheaf}\label{section: module-sheaf}

Since a deformation quantization defined via Fedosov abelian connections satisfies {\em locality}, it defines a sheaf of algebras on $X$, which can be viewed as the structure sheaf of the ``quantum geometry'' on $X$. The goal of this section is to construct a sheaf of modules over this structure sheaf, which we call a {\em Bargmann-Fock sheaf}.


\subsection{Extended holomorphic Weyl bundle and formal line bundles}\label{subsection: Bargmann-Fock-representation}
\

In this subsection, we define the extended holomorphic Weyl bundle on a K\"ahler manifold $X$. This is the first step in the construction of a Bargmann-Fock sheaf.  We first recall the Bargmann-Fock representation of the Wick algebra:
\begin{defn}\label{definition: Fock-representation-of-Wick-algebra}
We define an action of a monomial $f = z^{\alpha_1}\cdots z^{\alpha_k}\bar{z}^{\beta_1}\cdots\bar{z}^{\beta_l} \in \mathcal{W}_{\C^n}$ on $s\in\mathcal{F}_{\C^n} := \C[[z^1,\cdots,z^n]][[\hbar]]$ by 
\begin{equation}\label{equation: Toeplitz-operators-C-n}
f\circledast s:=\hbar^l\frac{\partial}{\partial z^{\beta_1}}\circ\cdots\circ\frac{\partial}{\partial z^{\beta_l}}\circ m_{z^{\alpha_1}\cdots z^{\alpha_k}}(s),
\end{equation}
where $m_{z^{\alpha_1}\cdots z^{\alpha_k}}$ denotes the multiplication by $z^{\alpha_1}\cdots z^{\alpha_k}$. It is known that
$$
f\circledast(g\circledast s)=(f\star g)\circledast s,
$$
so this defines an action of the Weyl algebra $\mathcal{W}_{\C^n}$ on $\mathcal{F}_{\C^n}$, known as the {\em Bargmann-Fock representation} (or the {\em Wick normal ordering} in physics literature). 
\end{defn}

Via the fiberwise Bargmann-Fock action, the holomorphic Weyl bundle $\W_X$ can be regarded as a sheaf of $\W_{X,\C}$-modules. We consider the following extension of $\W_X$ by allowing formal exponentials:
\begin{defn}\label{definition: Weyl-bundle-with-exponentials}
We define the sheaf $\W_{X,e}$ of extended  Weyl algebra with exponentials as follows:
for every open set $U\subset X$, we consider the space of finite sum of pairs
$$
\sum_{i=1}^k(f_i, e^{g_i/\hbar}),
$$
where $f_i,g_i$'s are smooth sections of $\W_X$ on $U$. We define the multiplication by the linear extension of 
$$
(f_1,e^{g_1/\hbar})\cdot(f_2,e^{g_2/\hbar}):=(f_1f_2,e^{(g_1+g_2)/\hbar}) 
$$
These are subject to the equivalence relation that $(f_1,e^{g_1/\hbar})\sim (f_2,e^{g_2/\hbar})$ if $f_1=f_2$ and $g_1-g_2\in\C[[\hbar]]$. Then the space $\W_{X,e}(U)$ of sections of $\W_{X,e}$ over $U$ is given by the set of equivalence classes.
\end{defn}

There is a sub-sheaf $\mathcal{O}_{X,e}$ of $\W_{X,e}$ defined as follows: for an open set $U\subset X$, the space $\mathcal{O}_{X,e}(U)$ consists of equivalence classes of finite sums
$$
\sum_{i=1}^n(f_i, e^{g_i/\hbar}),
$$
where $f_i,g_i\in \mathcal{O}_X(U)[[\hbar]]$ are all formal holomorphic functions on $U$. We should point out that $\mathcal{O}_X$  is naturally a sub-sheaf of $\mathcal{O}_{X,e}$ ($\W_{X,e}$) by the inclusion
$$
f\mapsto (f,e^{0/\hbar}).
$$
\begin{notn}
 For convenience, we will use the notation $f\cdot e^{g/\hbar}$ for the the pair $(f,e^{g/\hbar})$ in $\W_{X,e}$ (and also $\mathcal{O}_{X,e}$). 
\end{notn}
Similar to the definition holomorphic line bundles, we can define the notions of formal holomorphic line bundles, which are similar to local line bundles in \cite{Melrose} and twisting bundles in \cite{Tsygan}.  
\begin{defn}
A {\em formal line bundle} on $X$ is an invertible $\mathcal{O}_{X,e}$-module. 
\end{defn}
We can also introduce the notion of connection and curvature on formal line bundles. 
\begin{lem}\label{lemma: formal-holomorphic-line-bundle-correspondence-1-1-class}
For every formal closed $(1,1)$-form $\alpha\in \A_{closed}^{1,1}(X)[[\hbar]]$, there is a formal line bundle $L_{\alpha/\hbar}$ with connection $\nabla_{L_{\alpha/\hbar}}$ whose curvature is given by $\frac{1}{\hbar}\cdot\alpha$.  
\end{lem}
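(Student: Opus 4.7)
The plan is to construct $L_{\alpha/\hbar}$ by a \v{C}ech-style gluing of trivial $\mathcal{O}_{X,e}$-modules, using local $\partial\bar{\partial}$-potentials of $\alpha$ to produce transition data. The exponential ambiguity in Definition \ref{definition: Weyl-bundle-with-exponentials} is exactly what makes the construction work without any integrality assumption on $\alpha$.

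First I would choose a locally finite open cover $\{U_i\}$ of $X$ by polydisks. On each $U_i$, the local $\partial\bar{\partial}$-lemma, applied term-by-term in $\hbar$, produces a formal smooth function $\varphi_i\in C^\infty(U_i)[[\hbar]]$ satisfying $\partial\bar{\partial}\varphi_i=\alpha$. On any overlap $U_i\cap U_j$ the difference $\varphi_i-\varphi_j$ is pluriharmonic, so by the standard Dolbeault argument on a polydisk it decomposes as $\varphi_i-\varphi_j=h_{ij}+\overline{k_{ij}}$ for formal holomorphic functions $h_{ij},k_{ij}\in\mathcal{O}_X(U_i\cap U_j)[[\hbar]]$, uniquely determined modulo a $\mathbb{C}[[\hbar]]$-constant.

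Next I set $g_{ij}:=e^{-h_{ij}/\hbar}\in\mathcal{O}_{X,e}^\times(U_i\cap U_j)$ and define $L_{\alpha/\hbar}$ to be the invertible $\mathcal{O}_{X,e}$-module obtained by gluing $\mathcal{O}_{X,e}|_{U_i}\cdot e_i$ via $e_i=g_{ij}\,e_j$. The essential verification is the cocycle identity on $U_i\cap U_j\cap U_k$: the telescoping identity $(\varphi_i-\varphi_j)+(\varphi_j-\varphi_k)+(\varphi_k-\varphi_i)=0$ forces $(h_{ij}+h_{jk}+h_{ki})+\overline{(k_{ij}+k_{jk}+k_{ki})}=0$, and since a sum of a holomorphic and an antiholomorphic function vanishes only when each side is a constant, the holomorphic piece $c_{ijk}:=h_{ij}+h_{jk}+h_{ki}$ lies in $\mathbb{C}[[\hbar]]$. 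Hence $g_{ij}g_{jk}g_{ki}=e^{-c_{ijk}/\hbar}$, which is identified with $1$ by the equivalence relation of Definition \ref{definition: Weyl-bundle-with-exponentials}; this is where the formal framework is crucial.

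Finally I would define $\nabla_{L_{\alpha/\hbar}}$ in the local trivialization $e_i$ by $\nabla_{L_{\alpha/\hbar}} e_i:=-\tfrac{1}{\hbar}\partial\varphi_i\otimes e_i$. Since $\overline{k_{ij}}$ is antiholomorphic, $\partial(\varphi_i-\varphi_j)=\partial h_{ij}$, so the local connection $1$-forms $A_i:=-\tfrac{1}{\hbar}\partial\varphi_i$ satisfy $A_i-A_j=-\tfrac{1}{\hbar}\partial h_{ij}=d\log g_{ij}$, which is precisely the compatibility needed for $\nabla_{L_{\alpha/\hbar}}$ to glue. A direct local computation yields curvature $dA_i=-\tfrac{1}{\hbar}(\bar{\partial}\partial\varphi_i)=\tfrac{1}{\hbar}\partial\bar{\partial}\varphi_i=\tfrac{1}{\hbar}\alpha$, and this global $2$-form is chart-independent. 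The only step that requires any genuine thought is the cocycle condition, and, as noted, it holds on the nose precisely because $\mathcal{O}_{X,e}$ identifies exponentials that differ by a $\mathbb{C}[[\hbar]]$-constant.
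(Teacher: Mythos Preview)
Your proof is correct and follows essentially the same route as the paper: choose local potentials for $\alpha$, take the holomorphic part of their differences on overlaps as transition data, observe that the cocycle holds because the equivalence relation in $\mathcal{O}_{X,e}$ kills $\mathbb{C}[[\hbar]]$-constants in the exponent, and define the connection by $-\tfrac{1}{\hbar}\partial$ of the local potential. Your write-up is in fact slightly more careful than the paper's, in that you verify the gluing condition $A_i-A_j=d\log g_{ij}$ for the connection explicitly, whereas the paper simply asserts that the connection is well-defined.
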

\begin{proof}
Similar to holomorphic line bundles, we choose a fine cover $\{U_i\}$ of $X$.
On each $U_i$, we choose a local trivialization $e_i$ of $L_{\alpha/\hbar}$ on $U$, and $f_i\in C^\infty(U_i)[[\hbar]]$ such that $\bar{\partial}\partial(f_i)=\alpha|_{U_i}$.
On each non-empty intersection $U_i\cap U_j$, since $f_i,f_j$ are both potentials of $\alpha|_{U_i\cap U_j}$, their difference must be a sum of holomorphic and anti-holomorphic functions:
$$
f_i-f_j=f_{ij}(z)+g_{ij}(\bar{z}),
$$
and the functions $f_{ij}$ and $g_{ij}$ are unique up to constants (i.e., elements in $\C[[\hbar]]$). 
So $e^{f_{ij}(z)/\hbar}\in\mathcal{O}_{X,e}(U_i\cap U_j)$ are well-defined functions satisfying the cocycle condition:
$$
e^{\frac{f_{ij}(z)+f_{jk}(z)+f_{ki}(z)}{\hbar}}=1\in\mathcal{O}_{X,e}(U_i\cap U_j\cap U_k).
$$
This defines the desired invertible $\mathcal{O}_{X,e}$-module $L_{\alpha/\hbar}$.
It is equipped with a connection $\nabla_{L_{\alpha/\hbar}}$ which acts locally as 
$$
\nabla_{L_{\alpha/\hbar}}(f\cdot e^{g/\hbar}\otimes e_i)=(df+\frac{1}{\hbar}(f\cdot dg+f\cdot\partial f_i))\cdot e^{g/\hbar}\otimes e_i. 
$$
It is easy to see that the connection is well-defined and has curvature $\nabla_{L_{\alpha/\hbar}}^2=\frac{1}{\hbar}\cdot\alpha$. 
\end{proof}

Let us explain the motivations for introducing the notion of formal line bundles.
From the point of view of the Berezin-Toeplitz quantization, the above lemma implies the existence of a formal line bundle whose curvature is $\omega/\hbar$. Since $1/\hbar\sim k$, this formal line bundle corresponds to the asymptotics of the tensor powers $L^{\otimes k}$ of the prequantum line bundle as $k\rightarrow\infty$. This also explain why we use the name ``formal'' line bundle. On the other hand, from the point of view of Fedosov's construction, we need to twist $\W_{X,e}$ by a formal line bundle so to admit a Fedosov flat connection
\begin{rmk}
 The notion of formal line bundle here is similar to that of {\em local line bundle} in \cite{Melrose} and {\em twisted bundle} in \cite{Tsygan}. All these originate from the same motivation, namely, to introduce geometric objects whose curvature can be defined when there is no integrality condition. A significant difference is that our formal line bundle encodes the complex structure, so it gives a natural generalization of the notion of prequantum line bundles on K\"ahler manifolds. 
\end{rmk}

We now explain the Fedosov viewpoint in more details. The idea is very simple: the Fedosov connection on the Weyl bundle $\W_{X,\C}$ is the sum of the Levi-Civita connection $\nabla$ and the bracket $\frac{1}{\hbar}[\gamma_\alpha,-]_\star$. A naive guess is to replace the bracket by an extended fiberwise Bargmann-Fock action $\frac{1}{\hbar}\gamma_\alpha\circledast$ on $\W_{X,e}$. We need to be careful here: for a monomial $f$ as in Definition \ref{definition: Fock-representation-of-Wick-algebra}, we can extend its action to $\W_{X,e}$ by the same differential operator as in equation \eqref{equation: Toeplitz-operators-C-n}. In particular, 
$$
\bar{y}^j\circledast(f\cdot e^{g/\hbar})=\hbar\frac{\omega^{i\bar{j}}}{2\sqrt{-1}}\frac{\partial}{\partial y^i}(f\cdot e^{g/\hbar})=\hbar\frac{\omega^{i\bar{j}}}{2\sqrt{-1}}\left(\frac{\partial f}{\partial y^i}+\frac{1}{\hbar}f\cdot\frac{\partial g}{\partial y^i}\right)\cdot e^{g/\hbar}
$$
However, for general elements of $\W_{X,\C}$ on $\W_{X,e}$ we could run into infinite sums such as the following example: when $X=\C$, $g=y$ and $f=\sum_{k\geq 1}\bar{y}^k$, 
\begin{align*}
f\circledast e^{g/\hbar}=\left(\sum_{k\geq 1}\bar{y}^k\right)\circledast e^{y/\hbar}
=\sum_{k\geq 1}\left(\hbar\partial_{y}\right)^k (e^{y/\hbar})
=e^{y/\hbar}+e^{y/\hbar}+\cdots.
\end{align*}
If we write $f=\sum_{k,I,J}\hbar^k f_{I,\bar{J},k}y^I\bar{y}^J$ and $g=\sum_{I,k}\hbar^k g_{k,I}y^I$, then it is not difficult to see from the above example that the infinite sums come from two sources:
\begin{enumerate}
 \item Those terms $g_{0,i}y^i$ in $g$ which are linear in $\W_{X}$ and do not include $\hbar$;
 \item The infinite sums $\sum_{J}f_{k_0,I_0,\bar{J}}\hbar^{k_0}y^{I_0}\bar{y}^J$ for fixed indices $I_0, k_0$. 
\end{enumerate}
Thus we have the following lemma:
\begin{lem}\label{lemma: finite-condition-module-action}
We say that a section $\alpha = \sum_{k,I,J}\hbar^k\cdot\alpha_{k,I,\bar{J}}y^I\bar{y}^J$ of $\W_{X,\C}$ is {\em admissible} if it satisfies the following finiteness condition: for every fixed $I_0$ and $k_0$, $\sum_{J}\hbar^{k_0}\alpha_{k_0,I_0,\bar{J}}y^{I_0}\bar{y}^J$ is a finite sum. Then for any admissible $\alpha$ and for any section $s$ of $\W_{X,e}$, there is a well-defined $\alpha\circledast s$.
\end{lem}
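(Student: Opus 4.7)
The strategy is to address directly the two failure modes identified in the discussion preceding the lemma, and to show that the admissibility hypothesis rules out both of them. By the very definition of $\W_{X,e}$, any section is a finite sum of terms of the form $f\cdot e^{g/\hbar}$ with $f,g\in\W_X$, so by linearity the problem reduces to defining $\alpha\circledast\bigl(f\cdot e^{g/\hbar}\bigr)$ for a single such summand. My first step is to re-group
\[
\alpha=\sum_{k_0,I_0}\hbar^{k_0}\,y^{I_0}\,\beta_{k_0,I_0}(\bar{y}),\qquad
\beta_{k_0,I_0}(\bar{y}):=\sum_{J}\alpha_{k_0,I_0,\bar{J}}\,\bar{y}^J.
\]
Admissibility says precisely that each $\beta_{k_0,I_0}$ is a \emph{polynomial} in $\bar{y}$, so acting with $\beta_{k_0,I_0}$ on $f\cdot e^{g/\hbar}$ via the differential operator obtained by extending \eqref{equation: Toeplitz-operators-C-n} through the exponential factor by Leibniz produces an unambiguous element of the form $h_{k_0,I_0}\cdot e^{g/\hbar}$ with $h_{k_0,I_0}\in\W_X$.

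The content of the lemma then comes down to verifying that the formal series
\[
\alpha\circledast\bigl(f\cdot e^{g/\hbar}\bigr):=\sum_{k_0,I_0}\hbar^{k_0}\,y^{I_0}\,h_{k_0,I_0}\cdot e^{g/\hbar}
\]
makes sense in $\W_{X,e}$, that is, the coefficient of every monomial $\hbar^{m}\,y^{K}\,e^{g/\hbar}$ on the right-hand side is a finite sum. For this I would perform an $\hbar$-degree estimate on $h_{k_0,I_0}$: expanding $\partial_{y}^{J_2}\!\bigl(e^{g/\hbar}\bigr)=e^{g/\hbar}\cdot Q_{J_2}$ by the chain rule (Fa\`a di Bruno) indexes $Q_{J_2}$ by set partitions of $J_2$, each block $B$ contributing a factor $\hbar^{-1}\partial_{y}^{B}g$. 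Since $g\in\W_X$ carries only non-negative $\hbar$-powers, the $\hbar$-degree of $Q_{J_2}$ is bounded below by $-|J_2|$, the minimum being attained only when every derivative lands on an $\hbar$-free linear summand $g_{0,i}y^i$ of $g$. This worst-case $\hbar^{-|J_2|}$ is precisely compensated by the Wick prefactor $\hbar^{|J|}$ accompanying $\bar{y}^{J}\circledast(-)$ since $|J_2|\le|J|$, so $h_{k_0,I_0}$ itself has only non-negative powers of $\hbar$.

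Granted this bound, a pair $(k_0,I_0)$ can contribute to the coefficient of $\hbar^{m}\,y^{K}\,e^{g/\hbar}$ only when $k_0\le m$ and $|I_0|\le|K|$, a finite set of pairs; for each such pair the contribution is itself finite by admissibility, so the sum is well-defined. The main obstacle I anticipate is exactly the $\hbar$-degree estimate in the previous paragraph: this is where source (1) of divergence (the linear term $g_{0,i}y^i$ of $g$) must be tamed, and without the explicit cancellation between $\hbar^{|J|}$ and $\hbar^{-|J_2|}$ arbitrarily many $k_0$'s could otherwise contribute to a fixed output $\hbar$-degree. Once this cancellation is in place, source (2) is handled by admissibility essentially by definition, and the linearity reduction of the first paragraph completes the construction.
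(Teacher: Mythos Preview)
The paper does not give a proof of this lemma at all; it is stated immediately after the two sources of divergence are isolated, and is meant to be read as their direct consequence. Your proposal supplies the details the paper leaves implicit, and the overall strategy---reduce by linearity to a single $f\cdot e^{g/\hbar}$, use admissibility so that each $\beta_{k_0,I_0}(\bar y)$ is a polynomial and hence acts unambiguously, then bound the pairs $(k_0,I_0)$ that can feed a fixed output monomial via an $\hbar$-degree estimate---is exactly right.

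There is one slip worth fixing. In the Bargmann--Fock action of equation~\eqref{equation: Toeplitz-operators-C-n} the order of operations is \emph{multiply by $y^{I_0}$ first, then apply the differential operator coming from $\bar y^J$}; this is also how the action is used in the proof of Lemma~\ref{lemma: curvature-Levi-Civita-extended-holomorphic-Weyl}. Your displayed formula
\[
\alpha\circledast\bigl(f\cdot e^{g/\hbar}\bigr)=\sum_{k_0,I_0}\hbar^{k_0}\,y^{I_0}\,h_{k_0,I_0}\cdot e^{g/\hbar},
\]
with $h_{k_0,I_0}$ coming from $\beta_{k_0,I_0}$ acting on $f\cdot e^{g/\hbar}$, reverses this order. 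With the correct ordering the derivatives may hit $y^{I_0}$, so the prefactor can drop below $y$-degree $|I_0|$: already $y\bar y\circledast e^{y/\hbar}$ in dimension one produces a constant term, so your bound $|I_0|\le|K|$ fails there. The repair is painless: track the total \emph{weight} (twice the $\hbar$-degree plus the $y$-degree) of the prefactor instead of the $y$-degree alone. A single $\bar y^j$-action either hits the prefactor (weight $+1$) or hits the exponential and multiplies the prefactor by $\partial g$ (weight $\ge 0$ since $g\in\W_X$). Hence the correctly-ordered prefactor $\tilde h_{k_0,I_0}$ still has $\hbar$-degree $\ge 0$ and weight $\ge |I_0|$, so a contribution to $\hbar^m y^K$ forces $k_0\le m$ and $|I_0|\le 2(m-k_0)+|K|$, again a finite set of pairs. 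With this adjustment your argument is complete.
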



\begin{defn}\label{definition: Fock-sheaf}
	For a representative $\alpha$ of a formal $(1,1)$-class $[\alpha]\in \hbar H^{1,1}_{dR}(X)[[\hbar]]$, let $\gamma_\alpha$ be as in equation \eqref{equation: Fedosov-connection-gamma-alpha}. Since $\gamma_\alpha$ is admissible as in Lemma \ref{lemma: finite-condition-module-action}, 
	\begin{equation*}\label{equation: Fedosov-connection-W-X-e}
	D_\alpha\left(f\cdot e^{g/\hbar}\right) := \left(\nabla+\frac{1}{\hbar}\gamma_\alpha\circledast \right)\left(f\cdot e^{g/\hbar}\right).
	\end{equation*} 
	defines a connection on $\W_{X,e}[\hbar^{-1}]$.
Here $\nabla$ denotes the naturally extended Levi-Civita connection on $\W_{X,e}$:
$$
\nabla(f\cdot e^{g/\hbar})=\nabla(f)\cdot e^{g/\hbar}\pm \left(\frac{1}{\hbar}f\cdot\nabla(g)\right)\cdot e^{g/\hbar},
$$
and $\circledast$ denotes the fiberwise Bargmann-Fock action of $\W_{X,\C}$ on $\W_{X,e}$.
\end{defn}
\begin{lem}\label{lemma: curvature-Levi-Civita-extended-holomorphic-Weyl}
	The curvature of $D_\alpha$ is given by
	$$D_\alpha^2 = \frac{1}{\hbar}\omega_\hbar - \text{Ric}_X,$$
	where $\text{Ric}_X = R_{i\bar{j}k}^kdz^i\wedge d\bar{z}^j$ is the Ricci form of $X$.
	In particular, the connection $D_\alpha$ on $\W_{X,e}$ is not flat.
\end{lem}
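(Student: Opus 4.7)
The plan is to compute $D_\alpha^2$ by expanding the square, applying the Fedosov equation \eqref{equation: Fedosov-equation-gamma-alpha} to collapse the $\gamma_\alpha$-terms, and then identifying a residual mismatch between $\nabla^2$ and $\tfrac{1}{\hbar}R_\nabla\circledast$ on $\W_{X,e}$. It is precisely this mismatch---absent on $\W_{X,\C}$, where $\nabla^2 = \tfrac{1}{\hbar}[R_\nabla,-]_\star$---that produces the Ricci term.

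Expanding the square: since the Levi--Civita connection is a graded derivation of the commutative algebra $\W_{X,e}$ and is parallel for the Wick product (hence compatible with the fiberwise Bargmann--Fock action $\circledast$), the Leibniz identity $\nabla(\gamma_\alpha\circledast s) = (\nabla\gamma_\alpha)\circledast s - \gamma_\alpha\circledast\nabla s$ (sign from the odd form-degree of $\gamma_\alpha$) and the associativity $(\gamma_\alpha\circledast)^2 = (\gamma_\alpha\star\gamma_\alpha)\circledast$ give
\[
D_\alpha^2 \;=\; \nabla^2 \;+\; \tfrac{1}{\hbar}\bigl(\nabla\gamma_\alpha + \tfrac{1}{\hbar}\gamma_\alpha\star\gamma_\alpha\bigr)\circledast.
\]
The Fedosov equation rewrites the bracketed expression as $\omega_\hbar - R_\nabla$; since $\omega_\hbar$ is scalar it acts by multiplication, so the problem reduces to showing $\nabla^2 - \tfrac{1}{\hbar}R_\nabla\circledast = -\text{Ric}_X$ as operators on $\W_{X,e}$.

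The crux is this last identification. On $\W_{X,\C}$ the two operators coincide, but on $\W_{X,e}$ only the one-sided action $\circledast$ is available, and the difference is a trace of the Riemann tensor. Concretely, the Wick-normal-ordered monomial $y^k\bar{y}^l$ appearing in $R_\nabla$ acts by
\[
(y^k\bar{y}^l)\circledast s \;=\; \tfrac{\hbar}{2\sqrt{-1}}\,\omega^{p\bar{l}}\,\partial_{y^p}\!\bigl(y^k\cdot s\bigr) \;=\; \tfrac{\hbar}{2\sqrt{-1}}\,\omega^{k\bar{l}}\, s \;+\; y^k\bigl(\bar{y}^l\circledast s\bigr),
\]
producing an extra ``$\omega^{k\bar{l}}$'' contribution from $\partial_{y^p}$ hitting the inserted $y^k$. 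Substituting into $R_\nabla = -2\sqrt{-1}\,R^m_{i\bar{j}k}\omega_{m\bar{l}}\,dz^i\wedge d\bar{z}^j\otimes y^k\bar{y}^l$: the ``$y^k(\bar{y}^l\circledast)$'' piece, after the contraction $\omega_{m\bar{l}}\omega^{p\bar{l}}=\delta^p_m$, reassembles into the Riemann-tensor derivation and matches $\hbar\nabla^2$ (checked on the generators $y^a$ via the inner formula on $\W_{X,\C}$ and then extended to $\W_{X,e}$ by the derivation property of $\nabla^2$, including on the exponential sections $e^{g/\hbar}$); the extra piece contracts $R^m_{i\bar{j}k}\omega_{m\bar{l}}\omega^{k\bar{l}}=R^k_{i\bar{j}k}$ to yield $\text{Ric}_X$, with the correct sign to produce $-\text{Ric}_X$ in the final formula. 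The main obstacle is exactly this bookkeeping: the Ricci term is a genuinely one-sided quantum contribution---invisible on $\W_{X,\C}$ because the bracket symmetrizes it away---and extracting it with the right sign requires care with the normal-ordering identity above and with the sign conventions for $R_\nabla$ and $\text{Ric}_X$ fixed earlier in the paper.
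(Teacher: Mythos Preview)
Your proposal is correct and follows essentially the same route as the paper's proof: expand $D_\alpha^2$, use the Fedosov equation for $\gamma_\alpha$, and then identify $\nabla^2-\tfrac{1}{\hbar}R_\nabla\circledast=-\text{Ric}_X$ on $\W_{X,e}$ as the source of the Ricci term. The only cosmetic difference is that the paper carries out the last step by direct evaluation on a generic section $f\cdot e^{g/\hbar}$, whereas you isolate the normal-ordering identity $(y^k\bar{y}^l)\circledast s=\tfrac{\hbar}{2\sqrt{-1}}\omega^{k\bar{l}}s+y^k(\bar{y}^l\circledast s)$ and contract; both are the same computation.
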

\begin{proof}
Let $f\cdot e^{g/\hbar}$ be a section of $\W_{X,e}$. Then
\begin{align*}
\nabla^2(f\cdot e^{g/\hbar}) & = \nabla((\nabla(f)+f\nabla(g/\hbar))\cdot e^{g/\hbar})\\
& = (\nabla^2f+f\nabla^2(g/\hbar))\cdot e^{g/\hbar}.
\end{align*}
On the other hand, we have
\begin{align*}
\frac{1}{\hbar}R_\nabla\circledast (f\cdot e^{g/\hbar})
& = \frac{1}{\hbar}(-2\sqrt{-1})R_{i\bar{j}k}^m\omega_{m\bar{l}}dz^i\wedge d\bar{z}^j\otimes y^k\bar{y}^l\circledast (f\cdot e^{g/\hbar})\\
& = \frac{1}{\hbar}(-2\sqrt{-1})R_{i\bar{j}k}^m\omega_{m\bar{l}}dz^i\wedge d\bar{z}^j\otimes\hbar\frac{\omega^{p\bar{l}}}{2\sqrt{-1}}\frac{\partial}{\partial y^p}(y^k f\cdot e^{g/\hbar})\\
& = R_{i\bar{j}k}^mdz^i\wedge d\bar{z}^j\otimes \frac{\partial}{\partial y^m}(y^k f\cdot e^{g/\hbar})\\
& = R_{i\bar{j}k}^mdz^i\wedge d\bar{z}^j\otimes y^k\frac{\partial}{\partial y^m} (f\cdot e^{g/\hbar})+R_{i\bar{j}k}^kdz^i\wedge d\bar{z}^j\cdot (f\cdot e^{g/\hbar})\\
& = \nabla^2 (f\cdot e^{g/\hbar})+R_{i\bar{j}k}^kdz^i\wedge d\bar{z}^j\cdot (f\cdot e^{g/\hbar}).
\end{align*}
So the curvature of the Levi-Civita connection $\nabla$ on $\W_{X,e}$ is given by
\begin{equation*}\label{equation: curvature-Levi-Civita-extended-holomorphic-Weyl}
\nabla^2=\left(\frac{1}{\hbar}R_\nabla-R_{i\bar{j}k}^kdz^i\wedge d\bar{z}^j\right)\circledast.
\end{equation*}
Now we compute
\begin{align*}
\left(\nabla+\frac{1}{\hbar}{\gamma_\alpha}\circledast\right)^2(s)
& = \nabla^2(s)+\frac{1}{\hbar}\nabla(\gamma_\alpha\circledast s)+\frac{1}{\hbar}\gamma_\alpha\circledast(\nabla s+\frac{1}{\hbar}\gamma_\alpha\circledast s)\\
& = \nabla^2(s)+\frac{1}{\hbar}(\nabla\gamma_\alpha+\frac{1}{\hbar}\gamma_\alpha\star\gamma_\alpha)\circledast s\\
& = \frac{1}{\hbar}(\nabla\gamma_\alpha+\frac{1}{\hbar}\gamma_\alpha\star\gamma_\alpha+R_\nabla-\hbar R_{i\bar{j}k}^kdz^i\wedge d\bar{z}^j)\circledast s\\
& =\frac{1}{\hbar}(\omega_\hbar-\hbar\cdot R_{i\bar{j}k}^kdz^i\wedge d\bar{z}^j)\cdot s.
\end{align*}
\end{proof}

\subsection{Bargmann-Fock sheaves}\label{subsection: Bargmann-Fock-module-sheaf}
\

\begin{defn}\label{definition: Bargmann-Fock-sheaf}
	For a representative $\alpha$ of a formal $(1,1)$-class $[\alpha]\in \hbar H^{1,1}_{dR}(X)[[\hbar]]$, let $\omega_\hbar := 2\sqrt{-1}\cdot\omega-\alpha$ and $\alpha' := -\omega_\hbar+\hbar\cdot\text{Ric}_X$. Then we define the {\em sheaf of Bargmann-Fock modules} as
	$$\mathcal{F}_{X,\alpha} := \W_{X,e}\otimes_{\mathcal{O}_{X,e}} L_{\alpha'/\hbar}.$$
	It is equipped with the connection
	$$D_{B,\alpha} := (\nabla+\frac{1}{\hbar}\gamma_{\alpha}\circledast-)\otimes 1+1\otimes \nabla_{L_{\alpha'/\hbar}}.$$
\end{defn} 

Lemmas \ref{lemma: curvature-Levi-Civita-extended-holomorphic-Weyl} and \ref{lemma: formal-holomorphic-line-bundle-correspondence-1-1-class} imply that $D_{B,\alpha}$ is flat, i.e., $D_{B,\alpha}^2=0$.
We have seen that there is a well-defined Bargmann-Fock action of admissible sections in $\W_{X,\C}$ on $\mathcal{F}_{X,\alpha}$. In fact, this action is compatible with the connections on these sheaves:
\begin{lem}\label{lemma: compatibility-Fedosov-connection} 
The connection $D_{B,\alpha}$ is compatible with the Fedosov connection $D_{F,\alpha}$, i.e., if $O\in\W_{X,\C}$ is an admissible section and $s$ is a section of $\mathcal{F}_{X,\alpha}$ , then we have
\begin{equation*}\label{equation: compatibility-Fedosov-connection}
D_{B,\alpha}(O\circledast s)=D_{F,\alpha}(O)\circledast s+(-1)^{|O|} O\circledast(D_{B,\alpha}(s)).
\end{equation*}
In particular, if $O$ and $s$ are flat sections, then $O \circledast s$ is also a flat section.
\end{lem}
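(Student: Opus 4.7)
The plan is to verify the identity pointwise on a local trivialization by splitting $D_{B,\alpha}$ into its three constituent pieces---the Levi-Civita connection $\nabla$, the fiberwise Bargmann-Fock piece $\frac{1}{\hbar}\gamma_\alpha\circledast-$, and the line-bundle piece $1\otimes\nabla_{L_{\alpha'/\hbar}}$---and checking that each satisfies a graded Leibniz rule with respect to the admissible action $O\circledast-$. Writing a local section of $\mathcal{F}_{X,\alpha}$ as $s=\sigma\otimes e_i$ with $\sigma\in\W_{X,e}$ and $e_i$ a local trivialization of $L_{\alpha'/\hbar}$, admissibility of $O$ gives $O\circledast s=(O\circledast\sigma)\otimes e_i$, so it suffices to verify each identity separately on the $\W_{X,e}$ and $L_{\alpha'/\hbar}$ factors.

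For the line-bundle piece, $O$ does not touch $e_i$, so moving $\nabla_{L_{\alpha'/\hbar}}$ past $O\circledast\sigma$ yields the sign $(-1)^{|O|+|\sigma|}$, which matches the contribution $(-1)^{|O|}\cdot(-1)^{|\sigma|}$ obtained by applying $O\circledast-$ after the corresponding term in $D_{B,\alpha}(s)$. For the Levi-Civita piece I would establish the derivation identity
\begin{equation*}
\nabla(O\circledast\sigma)=(\nabla O)\circledast\sigma+(-1)^{|O|}O\circledast(\nabla\sigma),
\end{equation*}
which reduces to the parallelism of the K\"ahler form with respect to $\nabla$: the action $\circledast$ is built from $\partial/\partial y^i$ and the metric components $\omega^{i\bar j}$, both of which are $\nabla$-flat on the fiber, so the same computation that shows $\nabla$ to be a graded derivation for $\star$ shows it to be a graded derivation for $\circledast$.

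The substance of the lemma lies in the $\gamma_\alpha$-piece, where one uses the associativity of the Bargmann-Fock representation on $\W_{X,e}$, namely
\begin{equation*}
\gamma_\alpha\circledast(O\circledast\sigma)=(\gamma_\alpha\star O)\circledast\sigma,\qquad O\circledast(\gamma_\alpha\circledast\sigma)=(O\star\gamma_\alpha)\circledast\sigma,
\end{equation*}
combined with the graded commutator identity $\gamma_\alpha\star O-(-1)^{|O|}O\star\gamma_\alpha=[\gamma_\alpha,O]_\star$ (valid because $|\gamma_\alpha|=1$). Multiplying by $\frac{1}{\hbar}$ produces exactly the required matching between $\frac{1}{\hbar}\gamma_\alpha\circledast(O\circledast\sigma)$ on the left and $\frac{1}{\hbar}[\gamma_\alpha,O]_\star\circledast\sigma+(-1)^{|O|}\frac{1}{\hbar}O\circledast(\gamma_\alpha\circledast\sigma)$ on the right. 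The main technical obstacle is precisely justifying this associativity on the \emph{extended} sheaf $\W_{X,e}$: one must verify that admissibility of $O$ in the sense of Lemma \ref{lemma: finite-condition-module-action} is preserved by $\gamma_\alpha\star-$ and $-\star\gamma_\alpha$, so that each of $(\gamma_\alpha\star O)\circledast\sigma$, $\gamma_\alpha\circledast(O\circledast\sigma)$, and $O\circledast(\gamma_\alpha\circledast\sigma)$ is unambiguously defined as a formal series in $\hbar$. Once this finiteness bookkeeping is in place, the identity follows by a direct pointwise calculation, and the ``in particular'' statement is an immediate consequence of setting $D_{F,\alpha}(O)=0$ and $D_{B,\alpha}(s)=0$.
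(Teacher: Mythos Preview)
Your proposal is correct and follows essentially the same route as the paper's proof. The paper reduces to the compatibility between $D_\alpha$ and $D_{F,\alpha}$ on $\W_{X,e}$ (tacitly treating the line-bundle factor as inert under $O\circledast-$), and then carries out exactly the computation you describe: it uses the graded derivation property of $\nabla$ for $\circledast$, the associativity $\gamma_\alpha\circledast(O\circledast s)=(\gamma_\alpha\star O)\circledast s$, and the rewriting $(\gamma_\alpha\star O)=[\gamma_\alpha,O]_\star+(-1)^{|O|}O\star\gamma_\alpha$ together with $(O\star\gamma_\alpha)\circledast s=O\circledast(\gamma_\alpha\circledast s)$. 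Your only addition is the explicit treatment of the line-bundle piece and the admissibility bookkeeping, neither of which the paper spells out.
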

\begin{proof}
This actually comes from the compatibility between $D_\alpha$ and $D_{F,\alpha}$, namely, for $O\in\W_{X,\C}$ and $s \in \W_{X,e}$, we have
\begin{align*}
D_{\alpha}(O\circledast s) 
& = \left(\nabla+\frac{1}{\hbar}\gamma_\alpha\circledast\right)(O\circledast s)\\
& = \nabla(O)\circledast s+(-1)^{|O|} O\circledast\nabla(s)+\frac{1}{\hbar}(\gamma_\alpha\star O)\circledast s\\
& = \nabla(O)\circledast s+(-1)^{|O|}O\circledast\nabla(s)+\frac{1}{\hbar}[\gamma_\alpha,O]_\star\circledast s+(-1)^{|O|} O \circledast\left(\frac{1}{\hbar}\gamma_\alpha\circledast s\right)\\
& = D_{F,\alpha}(O)\circledast s+(-1)^{|O|}O\circledast D_{\alpha}(s).
\end{align*}
\end{proof}
Consider a smooth function $f$ with Taylor-Fedosov series $O_f=\sum_{I,J} a_{I\bar{J}}y^I\bar{y}^J$. If the action of $O_f$ on $\mathcal{F}_{X,\alpha}$ is well-defined, then the above compatibility implies that this action preserves flat sections under $D_{B,\alpha}$. 
However, we will need certain convergence property or analyticity in order to define an action of $\W_{X,\C}$ on $\mathcal{F}_{X,\alpha}$, since, as we mentioned right before Lemma \ref{lemma: finite-condition-module-action}, there will be certain infinite sums as in the following example:
\begin{align*}
\left(\sum_{J}a_{0,\bar{J}}\bar{y}^J\right)\circledast e^{\beta_iy^i/\hbar}
=&\left(a_{0,\bar{J}}\hbar^{|J|}\frac{\partial^{J}}{\partial y^{j_1}\cdots y^{j_{|J|}}}\right)(e^{\beta_iy^i/\hbar})\\
=&\sum_{J}a_{0,\bar{J}}\beta_{j_1}\cdots\beta_{j_{|J|}}\cdot e^{\beta_iy^i/\hbar}.
\end{align*}
From now on we will assume that the $(1,1)$-form $\omega_\hbar=2\sqrt{-1}\cdot\omega-\alpha$ is real analytic, and we define a function on $X$ which measures its analyticity:
\begin{lem-defn}
 Suppose $\omega_\hbar=2\sqrt{-1}\cdot\omega-\alpha$ is real analytic. We define a function $r: X\rightarrow(0,\infty]$ by letting $r(x_0)$ be the radius of convergence of $\omega_\hbar$ under a $K$-coordinate centered at $x_0$. The function $r$ is lower semi-continuous, and is independent of the choice of  $K$-coordinates because different choices differ only by a $U(n)$ transformation. Equivalently, suppose $r(x_0)>r_0$, then there exists a neighborhood $x_0\in U$, such that $r(x)>r_0$ for all $x\in U$. 
\end{lem-defn}

We define the following sub-class of real analytic functions, which roughly speaking consists of analytic functions with analyticity at least the same as that of $\omega_\hbar$.
\begin{defn}\label{definition: convergence-property-analytic-function}
For every open set $U\subset X$, let $C^{\omega_\hbar}_X(U)[[\hbar]]$ denote the set of real analytic functions on $X$ such that at every point $x_0 \in X$, the radius of convergence is greater than or equal to $r(x_0)$ under a $K$-coordinate centered at $x_0$.  
\end{defn}


\begin{lem}
 The spaces $C^{\omega_\hbar}_X(U)[[\hbar]]$ of functions define a sheaf $C^{\omega_\hbar}_X[[\hbar]]$ of algebras on $X$ under the Fedosov star product $\star_\alpha$. 
\end{lem}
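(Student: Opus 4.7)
The plan is to split the statement into two independent pieces: the sheaf axioms for $U \mapsto C^{\omega_\hbar}_X(U)[[\hbar]]$, and closure under $\star_\alpha$. The sheaf axioms come essentially for free, because the defining condition in Definition \ref{definition: convergence-property-analytic-function} is purely pointwise: at every $x_0$, each $\hbar$-coefficient must have radius of convergence at least $r(x_0)$ in a K-coordinate centered there. Restriction to open subsets trivially preserves this bound, and a compatible family of local sections patches together by the sheaf property of $C^{\omega}_X[[\hbar]]$, with the radius condition inherited pointwise on the union. This reduces the full statement to showing that if $f, g \in C^{\omega_\hbar}_X(U)[[\hbar]]$ then $f \star_\alpha g \in C^{\omega_\hbar}_X(U)[[\hbar]]$.

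For closure, I would write $f \star_\alpha g = \sum_{k \geq 0} \hbar^k C_k(f, g)$ and argue order by order in $\hbar$, so no convergence in the formal parameter is required. Fix $x_0 \in U$ and a K-coordinate centered at $x_0$. Each $C_k$ is a finite-order bi-differential operator whose coefficients are polynomial in finitely many components of the Fedosov solution $\gamma_\alpha$ and their derivatives; by Theorem \ref{theorem: Fedosov-connection-general} together with \eqref{equation: Fedosov-connection-gamma-alpha}, these components are constructed iteratively from $\omega_\hbar$, the inverse metric $\omega^{i\bar j}$, the Levi-Civita connection, and the curvature of $\omega$, using only algebraic operations, base-variable differentiation, and the fiber operators $(\delta^{1,0})^{-1}, (\delta^{0,1})^{-1}$. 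Since each of these operations preserves real analyticity of the base-variable coefficients on any polydisc on which $\omega_\hbar$ is analytic and the metric is invertible, the coefficients of $C_k$ are real analytic in K-coordinates at $x_0$ with radius of convergence at least $r(x_0)$.

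From here the argument reduces to standard closure properties of real analytic functions: sums, products, and derivatives all preserve a common lower bound on the radius of convergence at a fixed point, and the same holds for a differential operator whose coefficients are analytic on the same polydisc. Applied to $C_k$ and to the hypothesis that each $\hbar$-coefficient of $f$ and $g$ has radius at least $r(x_0)$ at $x_0$, this yields that each $\hbar$-coefficient of $C_k(f,g)$ is real analytic with radius at least $r(x_0)$ at $x_0$. Varying $k$ and $x_0$ gives $f \star_\alpha g \in C^{\omega_\hbar}_X(U)[[\hbar]]$, as required.

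The main obstacle I anticipate is the quantitative step embedded in the second paragraph: checking that $\gamma_\alpha$, expanded in K-coordinates at $x_0$, genuinely has base-variable coefficients with radius of convergence at least $r(x_0)$. The content here is that the fiberwise operators $(\delta^{1,0})^{-1}, \tilde\nabla^{1,0}, \tilde\nabla^{0,1}$ differentiate only in fiber directions, and the remaining factors (contraction with $\omega^{i\bar j}$, curvature components, Christoffel symbols) are rational in $\omega_\hbar$ and so analytic on a polydisc of radius $r(x_0)$ once invertibility of the metric is used. Once this bookkeeping is carried out — which is a routine application of the fact that inversion, composition, and differentiation of convergent analytic power series preserve joint domains of convergence — the rest of the proof is immediate.
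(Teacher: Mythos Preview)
Your proposal is correct and follows essentially the same approach as the paper: the sheaf axioms are immediate from the pointwise nature of the radius condition, and closure under $\star_\alpha$ is argued by observing that the coefficients of the bi-differential operators $C_k$ are built from the Christoffel symbols, the coefficients of $\omega_\hbar$, and their derivatives, all of which inherit the required radius of convergence. Your write-up is more explicit about the bookkeeping (the role of $\omega^{i\bar j}$, the fiberwise operators in the iterative construction of $\gamma_\alpha$, and the standard closure properties of analytic functions), but the underlying strategy is the same.
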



\begin{proof}
Since this convergence property of functions is defined pointwise, it is clear that the spaces define a sub-sheaf of the sheaf of real analytic functions. For every point $x_0\in X$, we fix a $K$-coordinate centered at $x_0$. From the Fedosov construction of the Wick type deformation quantization, we can see that the coefficients of the bi-differential operators $C_i(-,-)$ are either the Christoffel symbols of the Levi-Civita connection, the coefficients of $\omega_\hbar$ or their derivatives, which all have at least the same convergence property as the formal closed $(1,1)$-form $\omega_\hbar$.
\end{proof}
\begin{defn}\label{definition: flat-Bargmann-Fock}
The {\em Bargmann-Fock sheaf} $\mathcal{F}_{X,\alpha}^{\text{flat}}$ is defined as the sub-sheaf of $\mathcal{F}_{X,\alpha}$ which consists of flat sections that are finite sums of the following form: $\alpha\cdot e^{\beta/\hbar}\otimes e_U$, where we can write $\beta=\sum_{|I|\geq 0}\beta_Iy^I$ locally.
We require that the coefficients of the degree $1$ terms, i.e., $\beta_i$, $1\leq i\leq n$, satisfy the following boundedness condition:
\begin{equation}\label{equation: convergent-Fock-boundedness-condition}
\left\|\sum_{i=1}^n\beta_iy^i\right\|_{x_0} < r(x_0),
\end{equation}
where the norm is defined using the Hermitian metric on $T^*X$. 
\end{defn}
Notice that this definition is independent of the local trivializations of the holomorphic Weyl bundle and the formal line bundle. 

\begin{lem}\label{lemma: anti-holomorphic-Taylor-series-convergent-radius}
Consider $f\in C^{\omega_\hbar}(X)$ with Taylor-Fedosov series given locally by $O_f=\sum_{I,J}a_{I\bar{J}}y^I\bar{y}^J$ under a $K$-coordinate centered at $x_0$. For every multi-index $I_0$, the series
\begin{equation}\label{equation: convergence-fixed-holomorphic-degree}
\left(\sum_{J}a_{I_0\bar{J}}y^{I_0}\bar{y}^J\right)\Bigg|_{z=x_0,(y^1,\cdots, y^n)=(\xi^1,\cdots, \xi^n)},
\end{equation}
\begin{equation}\label{equation: convergence-fixed-holomorphic-degree-derivatives}
\nabla\left(\sum_{J}a_{I_0\bar{J}}y^{I_0}\bar{y}^J\right)\Bigg|_{z=x_0,(y^1,\cdots, y^n)=(\xi^1,\cdots, \xi^n)} 
\end{equation}
converge for $\xi=(\xi_1,\cdots,\xi_n)\in\C^n$ with $||\xi||<r(x_0)$.
\end{lem}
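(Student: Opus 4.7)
The strategy is to use Theorem~\ref{thm:main-thm-formal-Toeplitz-Fedosov-equation}, which identifies $O_f$ via the identity $J_f\cdot e^{\Phi/\hbar}=e^{\Phi/\hbar}\star O_f$, and to transfer the manifest analyticity of the classical objects $J_f$ and $\Phi$ to $O_f$. Concretely, the plan is: first, verify that at $z=x_0$ in $K$-coordinates, each $\hbar$-coefficient of both $J_f|_{z=x_0}$ and $\Phi|_{z=x_0}$ is convergent on the polydisc $\{|y|,|\bar{y}|<r(x_0)\}$; second, Wick-invert the identity and check that analyticity at each $\hbar$-order is preserved; third, extract the coefficient of $y^{I_0}$.

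For the analyticity of the inputs: since $f\in C^{\omega_\hbar}(X)$, Definition~\ref{definition: convergence-property-analytic-function} gives that the Taylor expansion of $f$ at $x_0$ in $K$-coordinates converges on $\{|z|<r(x_0)\}$, and the geometric interpretation of the classical flat section (see Proposition~\ref{proposition: classical-flat-section-smooth-function} and the discussion after) identifies $J_f|_{z=x_0}$ with precisely this double Taylor series in $(y,\bar{y})$, so it is analytic on $\{|y|,|\bar{y}|<r(x_0)\}$. Similarly, $\Phi=2\sqrt{-1}(-\omega_{i\bar{j}}y^i\bar{y}^j+\Phi_\omega)-\Phi_\alpha$ is assembled from $K$-coordinate Taylor jets of local potentials of the real-analytic $(1,1)$-forms $\omega$ and $\alpha$; the definition of $r(x_0)$ as the radius of convergence of $\omega_\hbar=2\sqrt{-1}\omega-\alpha$ ensures each $\hbar$-coefficient of $\Phi|_{z=x_0}$ is analytic on the same polydisc. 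Because $\Phi$ has $(y,\bar{y})$-polynomial degree at least $2$, only finitely many terms in the formal series for $e^{\Phi/\hbar}$ contribute at any fixed polynomial degree and $\hbar$-order, so every such coefficient of $e^{\Phi/\hbar}|_{z=x_0}$ is likewise analytic on the polydisc.

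For the inversion and extraction: at $z=x_0$ one has $\omega^{i\bar{j}}(x_0)=\delta^{ij}$, so the Wick product assumes its standard Bargmann--Fock form. Solving the identity iteratively for $O_f|_{z=x_0}$ expresses every $\hbar$-coefficient of $O_f|_{z=x_0}$ as a finite combination of products of derivatives of analytic functions on the polydisc, which is again analytic. The partial series $\sum_J a_{I_0\bar{J}}\,y^{I_0}\bar{y}^J$ is then the coefficient of $y^{I_0}$ in the $y$-expansion of $O_f|_{z=x_0}$, so it is analytic in $\bar{y}$ on $|\bar{y}|<r(x_0)$ at each $\hbar$-order, and evaluation at $(y,\bar{y})=(\xi,\bar{\xi})$ yields \eqref{equation: convergence-fixed-holomorphic-degree}. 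For \eqref{equation: convergence-fixed-holomorphic-degree-derivatives}, the $K$-coordinate Christoffel symbols vanish at $x_0$, so $\nabla|_{x_0}$ reduces to term-by-term differentiation in $(z,\bar{z})$, which preserves analyticity on the polydisc. The main obstacle in this plan is the careful book-keeping of the $\hbar$-expansion of $e^{\Phi/\hbar}$, which carries arbitrary negative powers of $\hbar$ and hence does not live in $\W_{X,\C}[[\hbar]]$ itself; this is handled by the combined filtration by polynomial degree and $\hbar$-order, under which the exponential, its Wick-inverse, and all Wick products appearing are well-defined stratum by stratum.
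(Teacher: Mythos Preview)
Your proposal has a genuine gap at the step you yourself flag as the ``main obstacle''. The claim that ``every $\hbar$-coefficient of $O_f|_{z=x_0}$ is a finite combination of products of derivatives of analytic functions'' is not justified, and does not follow from the filtration you invoke. The elements $e^{\Phi/\hbar}$ and its Wick-inverse live in $\W_{X,\C}^+$ and carry arbitrarily negative powers of $\hbar$. When you form $(e^{\Phi/\hbar})^{-1}\star(J_f\cdot e^{\Phi/\hbar})$, the coefficient of a fixed $\hbar^N$ receives contributions from $\hbar^p$ in the first factor, $\hbar^q$ in the second, and $\hbar^k$ from the $k$-th Wick contraction, subject only to $p+q+k=N$ with $k\ge 0$; since $p,q$ range over all of~$\Z$, this is an \emph{infinite} sum. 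The combined total-degree filtration only guarantees that at each total degree finitely many terms appear, i.e.\ that the star product is well-defined as a \emph{formal} series; it says nothing about convergence of the resulting $(y,\bar y)$-series at a fixed $\hbar$-order. So analyticity on the polydisc is exactly what remains to be proved, and your argument has not reduced it to anything easier. Your treatment of \eqref{equation: convergence-fixed-holomorphic-degree-derivatives} has a related gap: even granting that Christoffel symbols vanish at $x_0$, applying $\nabla$ means differentiating the $z$-dependence of $a_{I_0\bar J}(z)$ and then evaluating at $x_0$, so you would need analyticity in $(y,\bar y)$ with \emph{uniform} radius on a $z$-neighborhood of $x_0$, not just pointwise at $x_0$.

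The paper's proof avoids this entirely by working directly with the Fedosov iteration $O_f=f+\delta^{-1}(\nabla O_f+\hbar^{-1}[I_\alpha,O_f]_\star)$ and making the structural observation that every piece of $\delta^{-1}\circ(\nabla+\hbar^{-1}[I_\alpha,-]_\star)$ \emph{except} $\delta^{-1}\circ\nabla^{0,1}$ strictly raises either the holomorphic $y$-degree or the $\hbar$-degree. Hence for fixed $I_0$ and fixed $\hbar$-order, all but finitely many terms of $\sum_J a_{I_0\bar J}\bar y^J$ arise by iterating $\delta^{-1}\circ\nabla^{0,1}$ on a single function $g$ built from $f$, Christoffel symbols, coefficients of $\omega_\hbar$ and curvature; at $z=x_0$ this iteration is precisely the antiholomorphic Taylor expansion of $g$, whose radius is at least $r(x_0)$. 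The derivative version is handled by the same idea together with the commutator $[\nabla^{1,0},\tilde\nabla^{0,1}]$, which only introduces curvature coefficients and a harmless linear factor in the degree.
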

\begin{proof}
We can apply the following iterative equation for the Taylor-Fedosov series of $f$:
$$
O_f=f+\delta^{-1}(\nabla O_f+\frac{1}{\hbar}[I_\alpha,O_f]_\star).
$$
A simple observation is that all terms in $\delta^{-1}\circ\left(\nabla+\frac{1}{\hbar}[I_\alpha,-]_\star\right)$, except $\delta^{-1}\circ\nabla^{0,1}$, will increase either the holomorphic degree in $\W_{X,\C}$ or the degree of $\hbar$. Thus all but finitely many terms in $\sum_{J}a_{I_0\bar{J}}y^{I_0}\bar{y}^J$ can be obtained by applying the operator $\delta^{-1}\circ\nabla^{0,1}$ a number of times to a function which is a product of $f$, the Christoffel symbols, coefficients of $\omega_\hbar$ and the curvature tensor, and also their derivatives. These terms are exactly the purely anti-holomorphic part of the Taylor expansion of these functions under the $K$-coordinates at $x_0$, and the convergence of the series \eqref{equation: convergence-fixed-holomorphic-degree} follows. 

The proof for the convergence of the series \eqref{equation: convergence-fixed-holomorphic-degree-derivatives} is similar. From the discussion in the previous paragraph, we can assume, without loss of generality, that $I_0=0$ and $\sum_{J}a_{I_0\bar{J}}y^{I_0}\bar{y}^J=\sum_{k\geq 0}(\tilde{\nabla}^{0,1})^kg$, where $g$ is a function which is a product of $f$, the Levi-Civita connection, coefficients of $\omega_\hbar$, and their derivatives. We can split the series \eqref{equation: convergence-fixed-holomorphic-degree-derivatives} into its $(1,0)$- and $(0,1)$-parts. It is then clear that the $(0,1)$-part has the desired convergence property. Thus we only need to show that 
$$
\sum_{k\geq 0}\nabla^{1,0}\circ(\tilde{\nabla}^{0,1})^{k}(g)\Big|_{z=x_0,(y^1,\cdots, y^n)=(\xi^1,\cdots, \xi^n)} 
$$
converges for $||\xi||_{x_0}\leq r(x_0)$.  Notice that for every fixed $k\geq 0$, we have
\begin{align*}
\left(\nabla^{1,0}\circ(\tilde{\nabla}^{0,1})^{k}\right)(g)
= & [\nabla^{1,0},\tilde{\nabla}^{0,1}]\circ(\tilde{\nabla}^{0,1})^{k-1}(g)\pm\tilde{\nabla}^{0,1}\circ[\nabla^{1,0},\tilde{\nabla}^{0,1}]\circ(\tilde{\nabla}^{0,1})^{k-2}(g)+\cdots\\
&\quad \pm(\tilde{\nabla}^{0,1})^{k-1}\circ[\nabla^{1,0},\tilde{\nabla}^{0,1}]\pm(\tilde{\nabla}^{0,1})^{k}\circ\nabla^{1,0}(g),
\end{align*}
and the bracket $[\nabla^{1,0},\tilde{\nabla}^{0,1}]$ contributes the coefficients of the curvature. Thus, when evaluated at $z=x_0, y=\xi$, the absolute values of these terms are bounded by $k$ times the components of the Taylor series of functions with radius of convergence at least $r(x_0)$. Now the convergence of \eqref{equation: convergence-fixed-holomorphic-degree-derivatives} follows. 
\end{proof}

We also need the following lemma from elementary analysis:
\begin{lem}\label{lemma: convergence-of-derivatives-series}
 Let $f_n: U\rightarrow\C, n\in\N$ be a sequence of smooth functions on $U$,  and $D$ be a differential operator on $U$. Suppose that the two series $\sum_{i=1}^\infty f_n$ and $\sum_{i=1}^\infty D(f_n)$ converge uniformly to $S:U\rightarrow\C$ and $g:U\rightarrow\C$ respectively. 
Then $g=D(S)$. In other words, the infinite sum commutes with the differential operator $D$.  
\end{lem}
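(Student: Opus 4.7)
The plan is to localize in coordinates and reduce, by linearity and iteration, to the case where $D = \partial_j$ is a single first-order partial derivative. Since the conclusion is local, we may work in a coordinate chart in which $D = \sum_{|\alpha|\le k} a_\alpha(x)\,\partial^\alpha$ for smooth, hence locally bounded, coefficients $a_\alpha$. Multiplication by a locally bounded function commutes with uniformly convergent sums, so by linearity it suffices to prove $\partial^\alpha\bigl(\sum f_n\bigr) = \sum \partial^\alpha f_n$ for each multi-index $\alpha$ appearing in $D$; iterating further reduces everything to the base case $D = \partial_j$.

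For the base case, fix $x_0 \in U$, a coordinate direction $e_j$, and $s_0>0$ small enough that the segment $\{x_0 + s e_j : |s|\le s_0\}$ lies in $U$. The fundamental theorem of calculus gives
\[
f_n(x_0 + s e_j) - f_n(x_0) = \int_0^s \partial_j f_n(x_0 + \tau e_j)\,d\tau
\]
for every $n$ and every $|s|\le s_0$. The uniform convergence of $\sum \partial_j f_n$ on the compact segment justifies interchanging the sum with the integral, so that
\[
S(x_0 + s e_j) - S(x_0) = \int_0^s g(x_0 + \tau e_j)\,d\tau.
\]
Since $g$ is the uniform limit of continuous functions it is itself continuous, and differentiating the above at $s=0$ via the fundamental theorem of calculus once more yields $\partial_j S(x_0) = g(x_0)$, which is the desired identity.

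The main subtlety lies in the iteration to handle $D$ of order greater than one: a literal termwise iteration would require each intermediate partial-derivative series to converge uniformly, which is not explicitly hypothesized. To circumvent this, one tests the asserted identity against an arbitrary $\phi \in C_c^\infty(U)$ and applies integration by parts, converting $\sum \int D f_n \cdot \phi = \sum \int f_n \cdot D^*\phi$ (with $D^*$ the formal adjoint of $D$) into an equation between ordinary integrals that is immediately forced by uniform convergence of $\sum f_n$ and $\sum Df_n$ on the compact set $\mathrm{supp}(\phi)$. This yields $DS = g$ in the distributional sense, and the continuity of $g$ together with the smoothness of the $f_n$'s then promotes the identity to the classical one.
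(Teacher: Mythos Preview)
The paper does not supply a proof of this lemma; it is introduced with the phrase ``the following lemma from elementary analysis'' and then invoked in the proof of Theorem~\ref{theorem: Bargmann-Fock-representation}, so there is no argument in the paper to compare yours against.

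Your treatment of the base case $D=\partial_j$ via the fundamental theorem of calculus is the standard textbook argument and is correct. This already covers the paper's sole use of the lemma, since there $D$ is the connection $D_{B,\alpha}$, which is a first-order operator in the base coordinates.

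For higher-order $D$ your distributional argument is sound and does establish $DS=g$ in the weak sense. The closing sentence, however---that continuity of $g$ together with smoothness of the $f_n$ ``promotes'' the identity to a classical one---is not justified as written, and in fact can fail. For example, on $U=\mathbb{R}^2$ with $D=\partial_y\partial_x$, take $u:\mathbb{R}\to\mathbb{R}$ continuous and nowhere differentiable with smooth approximations $u_N\to u$ uniformly; setting $f_1(x,y)=u_1(x)$ and $f_n(x,y)=u_n(x)-u_{n-1}(x)$ for $n\ge 2$ gives $\sum f_n\to S(x,y)=u(x)$ and $\sum Df_n\equiv 0$ uniformly, yet $\partial_x S$ exists nowhere, so the classical $DS$ is undefined. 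Thus for general $D$ the lemma should be read distributionally (which your argument delivers), or one must add hypotheses guaranteeing enough regularity of $S$; smoothness of the individual summands alone does not provide this.
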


\begin{thm}\label{theorem: Bargmann-Fock-representation}
 The Bargmann-Fock sheaf $\mathcal{F}_{X,\alpha}^{\text{flat}}$ is a sheaf of modules over $\left(C^{\omega_\hbar}_X[[\hbar]],\star_\alpha\right)$. 
\end{thm}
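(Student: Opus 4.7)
The plan is to establish three things: (a) for any $f \in C^{\omega_\hbar}_X[[\hbar]]$ and any $s \in \mathcal{F}_{X,\alpha}^{\text{flat}}$, the Bargmann-Fock action $O_f \circledast s$ is well-defined as a section of $\mathcal{F}_{X,\alpha}$; (b) this section is flat under $D_{B,\alpha}$ and retains the form required by Definition \ref{definition: flat-Bargmann-Fock}; and (c) the action is compatible with the star product, i.e.\ $O_{f \star_\alpha g} \circledast s = O_f \circledast (O_g \circledast s)$. Part (c) is essentially formal, reducing to the fiberwise Bargmann-Fock representation property $(O_1 \star O_2) \circledast v = O_1 \circledast (O_2 \circledast v)$ combined with the defining identity $O_{f \star_\alpha g} = O_f \star O_g$. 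The whole technical content concentrates in (a), with (b) following from (a) by combining Lemma \ref{lemma: compatibility-Fedosov-connection} with a truncation argument.

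For (a), I would work locally in $K$-coordinates centered at $x_0$ and write a typical summand of $s$ as $h \cdot e^{g/\hbar} \otimes e_U$ with $g = \sum_{|M| \geq 0} g_M y^M$ satisfying $\|\sum_i g_i y^i\|_{x_0} < r(x_0)$, and expand $O_f = \sum_{k,I,J} \hbar^k a_{k,I,\bar{J}}\, y^I \bar{y}^J$. By Lemma \ref{lemma: finite-condition-module-action}, the sole potential obstruction is the infinite sum over $J$ for each fixed pair $(k, I)$. Using iteratively the formula $\bar{y}^j \circledast (-) = (\hbar\, \omega^{i\bar{j}}/2\sqrt{-1})\, \partial_{y^i}(-)$, each monomial $\bar{y}^J$ produces $|J|$ derivatives with an overall prefactor $\hbar^{|J|}$; every derivative that falls on $e^{g/\hbar}$ contributes a factor $(1/\hbar)\,\partial_{y^i}g$, so in the leading (all-derivatives-on-exponential) contribution the $\hbar$'s cancel and the result, specialised to $y=0$, becomes a product of the linear coefficients $g_i$. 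Lemma \ref{lemma: anti-holomorphic-Taylor-series-convergent-radius}, applied with $\xi$ determined by the linear part of $g$, then furnishes absolute convergence of the series precisely because $\|\sum_i g_i y^i\|_{x_0} < r(x_0)$. The sub-leading terms (some derivatives hitting $h$ instead of $e^{g/\hbar}$) carry additional non-negative powers of $\hbar$ and are controlled in the same way by applying Lemma \ref{lemma: anti-holomorphic-Taylor-series-convergent-radius} to derivatives of the coefficients of $O_f$.

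A crucial observation for (b) is that every derivative landing on $e^{g/\hbar}$ reproduces the same exponential, so the output takes the form $\tilde{h} \cdot e^{g/\hbar} \otimes e_U$ with the \emph{same} $g$; hence the boundedness condition \eqref{equation: convergent-Fock-boundedness-condition} transfers from $s$ to $O_f \circledast s$ automatically. For the flatness, I would truncate $O_f$ to $O_f^{(N)}$ (keeping only terms of total degree $\leq N$); each $O_f^{(N)}$ is admissible, so Lemma \ref{lemma: compatibility-Fedosov-connection} yields $D_{B,\alpha}(O_f^{(N)} \circledast s) = D_{F,\alpha}(O_f^{(N)}) \circledast s$, the term with $D_{B,\alpha}(s)$ vanishing by flatness of $s$. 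As $N \to \infty$ the right-hand side tends to $D_{F,\alpha}(O_f) \circledast s = 0$ filtration-wise, and by Lemma \ref{lemma: convergence-of-derivatives-series} the differential operator $D_{B,\alpha}$ commutes with the limit, giving $D_{B,\alpha}(O_f \circledast s) = 0$. The main obstacle is the uniform convergence analysis in (a): one has to simultaneously manage the Leibniz combinatorics of derivatives distributed between $h$ and $e^{g/\hbar}$ and patch the resulting double series (over $J$ and over derivative distributions) into a single absolutely convergent expression. Lemma \ref{lemma: anti-holomorphic-Taylor-series-convergent-radius} provides exactly the pointwise bounds needed, but stitching them into a uniform estimate strong enough to invoke Lemma \ref{lemma: convergence-of-derivatives-series} in the flatness step is the crux of the argument.
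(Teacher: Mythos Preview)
Your proposal is correct and follows essentially the same route as the paper: reduce the convergence question to the anti-holomorphic series acting on the exponential, invoke Lemma \ref{lemma: anti-holomorphic-Taylor-series-convergent-radius} together with the boundedness condition \eqref{equation: convergent-Fock-boundedness-condition} to get absolute convergence, and then use Lemma \ref{lemma: convergence-of-derivatives-series} to exchange $D_{B,\alpha}$ with the infinite sum so that flatness follows from $D_{F,\alpha}(O_f)=0$ via Lemma \ref{lemma: compatibility-Fedosov-connection}. The paper's presentation is terser---it reduces at once to the case $s=e^{\beta/\hbar}$ (using that $h\cdot e^{\beta/\hbar}=h\circledast e^{\beta/\hbar}$ for $h\in\W_X$, so $O_f\circledast(h\cdot e^{\beta/\hbar})=(O_f\star h)\circledast e^{\beta/\hbar}$ with $O_f\star h=O_f\cdot h$) and does not spell out the truncation---but your more explicit handling of the Leibniz terms and the observation that the exponential $g$ is preserved are sound and fill in details the paper leaves implicit.
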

\begin{proof}
Let $O_f=\sum_{I,J}a_{I,\bar{J}}y^I\bar{y}^J$ be the Taylor-Fedosov series of a function $f\in C^{\omega_\hbar}_X(U)[[\hbar]]$.  We only need to construct a well-defined $O_f\circledast e^{\beta/\hbar}$ with $\beta=\beta_Iy^I$ and show that $D_{B,\alpha}(O_f\circledast e^{\beta/\hbar})=0$. For every $x_0\in U$, we first prove the convergence of $\left(O_f(e^{\beta/\hbar})\right)|_{x_0}$. We choose a $K$-coordinate centered at $x_0$, and assume, without loss of generality, that the fixed holomorphic index $I_0=0$. Then the inequality \eqref{equation: convergent-Fock-boundedness-condition} becomes $||(\beta_1(x_0),\cdots,\beta_n(x_0))||\leq r(x_0)$ under the standard norm on $\mathbb{C}^n$, and we obtain the following infinite sum:
\begin{equation*}\label{equation: infinite-sum-after-Bargmann-Fock-action}
\begin{aligned}
\left(a_{0,J}\bar{y}^J\right)(e^{\beta_iy^i/\hbar})\big|_{x_0}
& = \left(a_{0,J}\hbar^{|J|}\frac{\partial^{J}}{\partial y^{j_1}\cdots y^{j_{|J|}}}\right)(e^{\beta_iy^i/\hbar})\big|_{x_0}\\
& = \sum_{J}a_{0,J}(x_0)\beta_{j_1}(x_0)\cdots\beta_{j_{|J|}}(x_0)\cdot e^{\beta_iy^i/\hbar}\big|_{x_0}.
\end{aligned}
\end{equation*}
The absolute convergence of this series follows from the inequality \eqref{equation: convergent-Fock-boundedness-condition} and Lemma \ref{lemma: anti-holomorphic-Taylor-series-convergent-radius}.  
Now convergence of the series \eqref{equation: convergence-fixed-holomorphic-degree-derivatives} and Lemma \ref{lemma: convergence-of-derivatives-series} imply that the connection $D_{B,\alpha}$ commutes with the infinite sum, thus we have $D_{B,\alpha}(O_f\circledast e^{\beta/\hbar})=0$.
\end{proof}

\section{Formal Berezin-Toeplitz quantizations}\label{section: micri-local-Berezin-Toeplitz}

In this section, we consider the case when $X$ is pre-quantizable and equipped with a prequantum line bundle $L$. We show that the Bargmann-Fock sheaf in this situation gives rise to a micro-local description of the asymptotics of Toeplitz operators on holomorphic sections of $L^{\otimes k}$. This can be generalized to all Wick type star products on K\"ahler manifolds whose Karabegov forms are real analytic. 

\subsection{Formal Toeplitz operators}
\


We first give a brief review of formal Hilbert spaces and the associated formal Toeplitz operators defined in \cite{Chan-Leung-Li}.  First of all, the Wick algebra has the following analytic interpretation. In the flat case when $X = \mathbb{C}^n$ (and with trivial prequantum line bundle $L$), the Hilbert space on which the Toeplitz operators act is the well-known {\em Bargmann-Fock space} $\mathcal{H}L^2(\mathbb{C}^n,\mu_\hbar)$, which consists of $L^2$ integrable entire holomorphic functions with respect to the density $\mu_\hbar(z) = (\pi\hbar)^{-n}e^{-|z|^2/\hbar}$; here $\hbar$ is regarded as a positive real number. 

It is easy to see, by direct computations, that the holomorphic polynomials 
$$
\frac{z^I}{\sqrt{I!\hbar^{|I|}}},
$$
where $I$ runs over all multi-indices, form an orthonormal basis of $\mathcal{H}L^2(\mathbb{C}^n,\mu_\hbar)$. Toeplitz operators associated to polynomials are defined by multiplying by a polynomial $f\in\C[z,\bar{z}]$, which is in general non-holomorphic, and then projecting back to the holomorphic subspace. For example, when $n=1$, we have
\begin{align*}
T_z = m_z,\quad
T_{\bar{z}} = \hbar\frac{d}{dz},\quad 
T_{f_1(z)f_2(\bar{z})} = f_2\left(\hbar\frac{d}{dz}\right)\circ m_{f_1(z)},
\end{align*}
For any $f,g\in\mathbb{C}[z,\bar{z}]$, we have $T_f\circ T_g=T_{f\star g}$.

By regarding $\hbar$ as a formal variable instead, we can interpret the $T_f$'s as Toeplitz operators on $\W_{\C^n}$, where the formal inner product is defined using Feynman graph expansions:
$$
\langle f, g\rangle:=\frac{1}{\hbar^n}\cdot\int f\bar{g}\cdot e^{\frac{-|y|^2}{\hbar}}\in\C[[\hbar]].
$$
More generally, we may allow perturbations of the Gaussian measure $e^{-|y|^2/\hbar}$ by {\em interaction terms}, and define formal Hilbert spaces: 
\begin{defn}
	Suppose that all the terms in $\phi(y,\bar{y})\in\W_{\mathbb{C}^n}$ have weight at least $3$. Then for $f, g\in\mathcal{W}_{\mathbb{C}^n}((\sqrt{\hbar}))$, we define their {\em formal inner product} as the formal integral
	\begin{equation}\label{defn:formal-inner-product}
		\langle f, g\rangle:=\frac{1}{\hbar^n}\cdot\int f\bar{g}\cdot e^{\frac{-|y|^2+\phi(y,\bar{y})}{\hbar}} \in \mathbb{C}((\sqrt{\hbar})),
	\end{equation}
	which is in turn defined using Feynman graph expansions. 
\end{defn}
\begin{defn}\label{defn:formal-Toeplitz-operator}
	The {\em orthogonal projection}
	\begin{equation*}\label{equation: formal-Toeplitz-operator}
		\pi_{\phi}: \mathcal{W}_{\mathbb{C}^n}\rightarrow \mathcal{F}_{\mathbb{C}^n}=\mathbb{C}[[y_1,\cdots, y_n]][[\hbar]]
	\end{equation*}
	is defined by requiring that
	$$
	\langle f, y^I\rangle=\langle\pi_{\phi}(f), y^I\rangle
	$$
	for all multi-indices $I$; here $\langle-,-\rangle$ is the formal inner product defined in \eqref{defn:formal-inner-product}.
	
	The {\em formal Toeplitz operator} $T_{f,\phi}$ associated to $f\in\mathcal{W}_{\mathbb{C}^n}$ is defined as the composition of multiplication by $f$ and the orthogonal projection $\pi_{\phi}$:
	$$
	T_{f,\phi}:=\pi_{\phi}\circ m_{f}.
	$$
\end{defn}
\begin{prop}[Theorem 2.2 and its proof  in \cite{Chan-Leung-Li}]\label{proposition: formula-formal-Toeplitz} 
For any $f\in\mathcal{W}_{\mathbb{C}^n}$, there exists a unique $O_f$ in the Wick algebra such that
\begin{enumerate}
  \item $T_{f,\phi}(s)=O_f\star s$, for any $s\in\mathcal{F}_{\mathbb{C}^n}$;
  \item Let $f$ be a monomial, then the leading term of $O_f$ is exactly $f$, i.e.,
         $$
         O_f=f+\cdots,
         $$
         where the dots denote terms of degree greater than $\deg(f)$. 
  \item $O_f$ is the unique solution of the following equation:
  \begin{equation}\label{equation: formal-Toeplitz-operator-equality}
  f\cdot e^{\phi/\hbar}=e^{\phi/\hbar}\star O_f.
  \end{equation}
 \end{enumerate}
\end{prop}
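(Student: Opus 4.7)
The plan is to establish part (3) first, since it singles out $O_f$ uniquely, then to read off (2) from the construction and to deduce (1) through a Gaussian integration-by-parts argument.

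For (3), introduce the weight filtration on $\mathcal{W}_{\mathbb{C}^n}$ under which $\hbar^k y^I \bar y^J$ has weight $2k + |I| + |J|$. Because $\phi$ has all terms of weight at least $3$, the element $\phi/\hbar$ has weight at least $1$, so $e^{\phi/\hbar} = 1 + (\text{weight}\geq 1)$. Since the Wick contraction $-\hbar\,\partial_z\partial_{\bar w}$ is weight-preserving, the $\mathbb{C}[[\hbar]]$-linear map $\Lambda\colon O \mapsto e^{\phi/\hbar}\star O$ equals the identity plus a strictly weight-increasing operator, and hence is invertible on the weight-adic completion of $\mathcal{W}_{\mathbb{C}^n}$. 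Setting $O_f := \Lambda^{-1}(f\cdot e^{\phi/\hbar})$ then gives the unique solution of (3). Property (2) is immediate: unrolling the Neumann series for $\Lambda^{-1}$, if $f$ is a monomial of weight $d$, then $f\cdot e^{\phi/\hbar} = f + (\text{weight} > d)$ and so $O_f = f + (\text{weight} > d)$.

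For (1), by the defining property of the projection $\pi_\phi$, it suffices to verify
\[
\langle f \cdot s,\, y^K\rangle = \langle O_f \star s,\, y^K\rangle
\]
for every holomorphic $s \in \mathcal{F}_{\mathbb{C}^n}$ and every multi-index $K$. Substituting equation (3) into the formal Gaussian integral defining the left-hand side replaces the factor $f\cdot e^{\phi/\hbar}$ by $e^{\phi/\hbar}\star O_f$; the Gaussian integration-by-parts identity — namely, that the adjoint of multiplication by $y^i$ against $e^{-|y|^2/\hbar}$ is $\hbar\,\partial_{\bar y^i}$, which is exactly the contraction appearing in the Wick product — then transfers the convolution onto $s\cdot\bar y^K$ and yields $\langle O_f \star s, y^K\rangle$.

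The main obstacle lies in this last manipulation, which must be justified at the level of Feynman graph expansions for the formal inner product: one needs the graphs computing $\langle fs, y^K\rangle$ with interaction $\phi$ to be in weight-preserving bijection with those computing $\langle O_f \star s, y^K\rangle$. This bijection is precisely what (3) furnishes, encoding the translation between multiplying by $f$ on the Wick-algebra side and convolving by $O_f$ against the deformed Gaussian measure on the formal Fock-space side.
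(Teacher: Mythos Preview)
The paper does not supply a proof here; the proposition is quoted from \cite{Chan-Leung-Li} (Theorem~2.2 there), so there is no in-paper argument to compare against.  Judged on its own, your outline for (3) and (2) is the natural one, with one caveat: the map $\Lambda(O)=e^{\phi/\hbar}\star O$ does not preserve $\mathcal{W}_{\mathbb{C}^n}$, since both $e^{\phi/\hbar}$ and $f\cdot e^{\phi/\hbar}$ lie only in the extended algebra $\mathcal{W}_{\mathbb{C}^n}^+$ of Definition~\ref{defn:extension-Wick-algebra}.  As written, your Neumann series therefore produces $O_f\in\mathcal{W}_{\mathbb{C}^n}^+$, whereas the proposition asserts $O_f\in\mathcal{W}_{\mathbb{C}^n}$.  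The fix is to note that $e^{\phi/\hbar}\star O=e^{\phi/\hbar}\cdot D(O)$ for a differential operator $D$ in $\partial_{\bar y}$ whose coefficients carry only non-negative powers of $\hbar$ (each $\hbar^{-1}$ produced by differentiating the exponential is cancelled by the $\hbar$ in a Wick contraction); then $D=\mathrm{id}+(\text{weight-raising})$ acts on $\mathcal{W}_{\mathbb{C}^n}$ itself and \eqref{equation: formal-Toeplitz-operator-equality} becomes $D(O_f)=f$, solvable there.

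The genuine gap is in (1).  You correctly isolate the mechanism---Gaussian integration by parts matching the Wick contraction---but the last two sentences assert rather than prove the required identity.  After substituting (3), what must be shown is
\[
\int s\,\bar y^K\,(e^{\phi/\hbar}\star O_f)\,e^{-|y|^2/\hbar}
=\int (O_f\circledast s)\,\bar y^K\,e^{\phi/\hbar}\,e^{-|y|^2/\hbar}.
\]
This does follow by checking it on the generators $O_f=y^i$ and $O_f=\bar y^j$ via integration by parts, and then extending by associativity of $\star$ together with the fact that $\circledast$ is a representation; but you have carried out neither step, and the sentence ``this bijection is precisely what (3) furnishes'' does not substitute for the computation.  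As it stands, part (1) is a plausible strategy rather than a proof.
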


\subsection{Prequantum Bargmann-Fock sheaves}\label{section: prequantum-Bargmann-Fock}
\

In this subsection, we consider the following sheaf of Bargmann-Fock modules:
$$
\mathcal{F}_{X, \alpha} = \W_{X,e}\otimes_{\mathcal{O}_{X,e}}L_{-2\sqrt{-1}\omega/\hbar},
$$
which we call a {\em prequantum Bargmann-Fock sheaf}; here $\alpha = -\hbar \text{Ric}_X =  -\hbar\cdot R_{i\bar{j}k}^kdz^i\wedge d\bar{z}^j$. When $X$ is pre-quantizable, we will explain in the next subsection how this prequantum Bargmann-Fock sheaf describes the micro-local behavior of the Berezin-Toeplitz operators on holomorphic sections of $L^{\otimes k}$.

For any local holomorphic frame $e_{x_0}$ of $L_{-2\sqrt{-1}\omega/\hbar}$ around $x_0$ such that
\begin{equation}\label{equation: connection-formal-line-bundle}
\nabla_{L_{-2\sqrt{-1}\omega/\hbar}}(e_{x_0})=-\frac{1}{\hbar}\partial\rho\otimes e_{x_0},
\end{equation}
where $\rho$ is a K\"ahler potential (i.e. $\partial\bar{\partial}(\rho)=-2\sqrt{-1}\omega$), 
we define a local section of the holomorphic Weyl bundle $\W_X$ by $\beta=\sum_{k\geq 1}(\tilde{\nabla}^{1,0})^k(\rho)$.
The following theorem, whose proof will be given in Appendix \ref{appendix: flat-section-Bargmann-Fock}, describes some local flat sections of the prequantum Bargmann-Fock sheaf:
\begin{thm}\label{theorem: flat-section-Bargmann-Fock}
Suppose a section of the prequantum Bargmann-Fock sheaf is of the form $A\cdot e^{\beta/\hbar}\otimes e_{x_0}$ around $x_0$, where $A$ is a section of $\W_X$. Then $D_{B,\alpha}\left(A\cdot e^{\beta/\hbar}\otimes e_{x_0}\right)=0$ if and only if $D_K(A)=0$, or equivalently, $A=J_s$ for some holomorphic function $s$. 
\end{thm}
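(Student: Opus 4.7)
The cleanest strategy is to reduce to the special case $A = 1$ first. For general $A \in \W_X$, since $A$ has no $\bar y$-components, its Bargmann-Fock action on $\mathcal{F}_{X,\alpha}$ is plain multiplication, and so the compatibility Lemma~\ref{lemma: compatibility-Fedosov-connection} together with $D_{F,\alpha}|_{\W_X} = D_K$ from Theorem~\ref{theorem: Fedosov-connection-general}(2) gives
\begin{equation*}
D_{B,\alpha}\bigl(A \cdot e^{\beta/\hbar}\otimes e_{x_0}\bigr) = D_{F,\alpha}(A)\circledast\bigl(e^{\beta/\hbar}\otimes e_{x_0}\bigr) = D_K(A)\cdot e^{\beta/\hbar}\otimes e_{x_0},
\end{equation*}
as soon as $e^{\beta/\hbar}\otimes e_{x_0}$ is known to be flat. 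Hence the two equivalences follow: the left-hand side vanishes iff $D_K(A) = 0$ (since $e^{\beta/\hbar}\otimes e_{x_0}$ is a local frame), which by Proposition~\ref{proposition:flat-sections-holomorphic-Weyl} is equivalent to $A = J_s$ for some local holomorphic function $s$.

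The core calculation is therefore the flatness of $e^{\beta/\hbar}\otimes e_{x_0}$ itself. Using equation~\eqref{equation: connection-formal-line-bundle} and $\nabla(e^{\beta/\hbar}) = \tfrac{1}{\hbar}\nabla\beta\cdot e^{\beta/\hbar}$, the condition $D_{B,\alpha}(e^{\beta/\hbar}\otimes e_{x_0}) = 0$ reduces to the algebraic identity
\begin{equation*}
\nabla\beta + \bigl(\gamma_\alpha \circledast e^{\beta/\hbar}\bigr)/e^{\beta/\hbar} = \partial\rho
\end{equation*}
in $\A_X^1(\W_X)$. Splitting $\gamma_\alpha = I_\alpha + 2\sqrt{-1}\omega_{i\bar j}(dz^i\otimes\bar y^j - d\bar z^j\otimes y^i)$ into $\bar y$-free pieces (acting by multiplication) and $\bar y$-linear pieces (acting via $\bar y^j\circledast e^{\beta/\hbar} = \tfrac{\omega^{k\bar j}}{2\sqrt{-1}}\partial_{y^k}\beta\cdot e^{\beta/\hbar}$), the left-hand side expands into contributions from $\nabla\beta$, $J_\alpha$, $-2\sqrt{-1}\omega_{i\bar j}d\bar z^j\otimes y^i$, $\delta^{1,0}\beta$, $\tilde R^*\beta$, plus an $\hbar$-multiple of a Ricci-trace term produced by the Leibniz rule on $y^{i_1}\cdots y^{i_n}\cdot e^{\beta/\hbar}$ inside the Bargmann-Fock action of $I$.

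I would verify the identity by separating bidegrees. The $(1,0)$-part reads $\nabla^{1,0}\beta + \delta^{1,0}\beta = \partial\rho$, which is immediate from the defining recursion $\beta = \sum_{k\ge 1}(\tilde\nabla^{1,0})^k\rho$ and the telescoping $\delta^{1,0}((\tilde\nabla^{1,0})^k\rho) = \nabla^{1,0}((\tilde\nabla^{1,0})^{k-1}\rho)$ built into $\tilde\nabla^{1,0} = (\delta^{1,0})^{-1}\circ\nabla^{1,0}$, up to curvature corrections from $[\nabla^{1,0},(\delta^{1,0})^{-1}]$ absorbed by $\tilde R^*\beta$. The $(0,1)$-part must give $\nabla^{0,1}\beta - 2\sqrt{-1}\omega_{i\bar j}d\bar z^j\otimes y^i + J_\alpha + (\text{Ricci-trace piece}) = 0$: its lowest $y$-order cancels by $\partial\bar\partial\rho = -2\sqrt{-1}\omega$, while higher orders cancel through the iterative formula~\eqref{equation: relation-Phi-I} applied now to the Ricci potential defining $\alpha = -\hbar\text{Ric}_X$. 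The main obstacle is precisely this last step: tracking that the Ricci-trace piece from the $I\circledast$-action is exactly absorbed by the prequantum choice $\alpha' = -2\sqrt{-1}\omega$, equivalently by the $-\text{Ric}_X$ correction appearing in Lemma~\ref{lemma: curvature-Levi-Civita-extended-holomorphic-Weyl}, together with the order-by-order contributions of $J_\alpha$ and $\partial_y\beta$. Should the direct bookkeeping prove unwieldy, a cleaner alternative is to invoke Theorem~\ref{thm:main-thm-formal-Toeplitz-Fedosov-equation}: for holomorphic $s$ one has $J_s = O_s$, so flatness of $J_s\cdot e^{\beta/\hbar}\otimes e_{x_0}$ follows from the Fedosov-flatness of $O_s$ and the $A=1$ case via Lemma~\ref{lemma: compatibility-Fedosov-connection}, reducing the full theorem to the single identity above.
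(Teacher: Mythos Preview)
Your reduction to the case $A=1$ via Lemma~\ref{lemma: compatibility-Fedosov-connection} and $D_{F,\alpha}|_{\W_X}=D_K$ is exactly the paper's opening move, and the remaining direct computation of $D_{B,\alpha}(e^{\beta/\hbar}\otimes e_{x_0})$ follows the same outline. Two points in your sketch need correction, however.

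First, there are no curvature corrections in the $(1,0)$-part: the operators $\tilde R_n^*$ are $(0,1)$-form valued, so $D_K^{1,0}=\nabla^{1,0}-\delta^{1,0}$ exactly, and the telescoping $\delta^{1,0}\beta_k=\nabla^{1,0}\beta_{k-1}$ (with $\delta^{1,0}\beta_1=\partial\rho$) is clean on its own. The paper in fact packages both bidegrees into the single identity $D_K(\beta)=2\sqrt{-1}\,\omega_{i\bar j}\,d\bar z^j\otimes y^i-\partial\rho$, which is what ultimately gets substituted into the expanded $D_{B,\alpha}(e^{\beta/\hbar}\otimes e_{x_0})$.

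Second, the $(0,1)$-cancellation you flag as the main obstacle is not governed by equation~\eqref{equation: relation-Phi-I}; that identity relates $\Phi_\omega$ to $I$ and plays no role here. The actual mechanism is that the Bargmann-Fock action of $I_n$ on $e^{\beta/\hbar}$ produces, besides the desired $\tilde R_n^*(\beta/\hbar)$, a Ricci-trace remainder $n\cdot R^{\,i}_{\,i\,i_1\cdots i_{n-1},\bar l}\,d\bar z^l\otimes y^{i_1}\cdots y^{i_{n-1}}$, and this must be cancelled \emph{exactly} by $\tfrac{1}{\hbar}J_\alpha$ acting by multiplication. That requires the closed formula
\[
(J_\alpha)_m=-(m+1)\hbar\,R^{\,i}_{\,i\,i_1\cdots i_m,\bar l}\,d\bar z^l\otimes y^{i_1}\cdots y^{i_m},
\]
which the paper proves by induction from the recursion defining $J_\alpha$ with $\alpha=-\hbar\,\text{Ric}_X$. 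Without this lemma the bookkeeping does not close; your reference to~\eqref{equation: relation-Phi-I} would not supply it. Finally, your proposed alternative via Theorem~\ref{thm:main-thm-formal-Toeplitz-Fedosov-equation} is circular: it still presupposes the $A=1$ flatness, which is the entire content of the calculation.
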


\begin{prop}\label{proposition: local-section-prequantum-Bargmann-Fock}
Suppose the holomorphic frame $e_{x_0}$ in Equation \eqref{equation: connection-formal-line-bundle} is chosen such that $\rho=\rho_{x_0}$ in Definition \ref{definition: K-coordinates-K-frame}. There exists a neighborhood $U$ of $x_0$, such that
\begin{equation}\label{equation: canonical-flat-section-prequantum-sheaf}
e^{\beta/\hbar}\otimes e_{x_0}\in\mathcal{F}_{X,\alpha}^{\text{flat}}(U).
\end{equation}
\end{prop}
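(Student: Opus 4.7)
The plan is to verify the two defining properties of membership in $\mathcal{F}_{X,\alpha}^{\text{flat}}(U)$ as spelled out in Definition \ref{definition: flat-Bargmann-Fock}: flatness under $D_{B,\alpha}$ and the pointwise boundedness condition \eqref{equation: convergent-Fock-boundedness-condition} on the linear-in-$y$ part of $\beta$.

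The flatness is essentially free from Theorem \ref{theorem: flat-section-Bargmann-Fock}. Writing the section as $A\cdot e^{\beta/\hbar}\otimes e_{x_0}$ with $A=1\in\W_X$, observe that $1=J_1$, the jet of the (globally holomorphic) constant function $1$, so $D_K(1)=0$. Theorem \ref{theorem: flat-section-Bargmann-Fock} then immediately yields $D_{B,\alpha}(e^{\beta/\hbar}\otimes e_{x_0})=0$ on the neighborhood of $x_0$ on which the chosen K\"ahler potential $\rho=\rho_{x_0}$ and the frame $e_{x_0}$ are defined.

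For the boundedness condition I would extract the linear-in-$y$ part of $\beta=\sum_{k\geq 1}(\tilde{\nabla}^{1,0})^k(\rho_{x_0})$. Each application of $\tilde{\nabla}^{1,0}=(\delta^{1,0})^{-1}\circ\nabla^{1,0}$ strictly raises the $y$-polynomial degree by one, so only the $k=1$ summand contributes to degree $1$, and a direct computation gives $\tilde{\nabla}^{1,0}(\rho_{x_0})=(\partial_i\rho_{x_0})\,y^i$. Thus $\beta_i(x)=\partial_i\rho_{x_0}(x)$. The key input is the Taylor expansion \eqref{equation: Taylor-expansion-hermitian-metric} in K-coordinates centered at $x_0$, which has no pure holomorphic or mixed term of total degree $\leq 1$ in $(z,\bar z)$; in particular $\partial_i\rho_{x_0}(x_0)=0$ for every $i$. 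Hence the smooth section $x\mapsto\sum_i\beta_i(x)\,y^i$ of $T^*X$ vanishes at $x_0$, so its pointwise norm $\|\sum_i\beta_i(x)y^i\|_x$ tends to $0$ as $x\to x_0$.

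Combining this with the lower semi-continuity of $r(\cdot)$ recorded just before Definition \ref{definition: convergence-property-analytic-function}, I can choose an open neighborhood $U$ of $x_0$ on which $\|\sum_i\beta_i(x)y^i\|_x<r(x)$ holds pointwise, which is exactly \eqref{equation: convergent-Fock-boundedness-condition}. The only point requiring any care is the extraction of the linear coefficients of $\beta$ and the observation that the K\"ahler normal coordinate hypothesis on $\rho_{x_0}$ is precisely what forces them to vanish at $x_0$; once this is in hand, the shrinking-neighborhood argument is routine, and there is no genuine obstacle in the argument.
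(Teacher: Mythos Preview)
Your proof is correct and follows essentially the same approach as the paper: flatness is read off from Theorem \ref{theorem: flat-section-Bargmann-Fock} with $A=1$, and the boundedness condition is obtained by observing that the linear coefficients $\beta_i$ vanish at $x_0$ (the paper states the slightly stronger $\beta|_{x_0}=0$, using that \emph{all} pure holomorphic Taylor coefficients of $\rho_{x_0}$ vanish at $x_0$, but only the linear part is needed) and then invoking continuity together with the lower semi-continuity of $r$.
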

\begin{proof}
By Theorem \ref{theorem: flat-section-Bargmann-Fock}, $D_{B,\alpha}\left(e^{\beta/\hbar}\otimes e_{x_0}\right)=0$. Thus we only need to prove that the boundedness condition \eqref{equation: convergent-Fock-boundedness-condition} is satisfied. From the Taylor expansion \eqref{equation: Taylor-expansion-hermitian-metric} of $\rho_{x_0}$ and the definition of $\beta$, we have $\beta|_{x_0}=0$. In particular,  
$\beta_iy^i|_{x_0}=0$.
Thus the lower semi-continuity of $r(x)$ implies that the condition \eqref{equation: convergent-Fock-boundedness-condition} holds in a neighborhood $U$ of $x_0$.  
\end{proof}
\begin{rmk}
Proposition \ref{proposition: local-section-prequantum-Bargmann-Fock} shows that for a small enough open set $U\subset X$, the space $\mathcal{F}_{X,\alpha}^{\text{flat}}(U)$ is not empty. This can be easily generalized to  general Bargmann-Fock sheaves. 
\end{rmk}

\begin{prop}\label{proposition: representation-stalk}
There exists a subspace $V_{x_0}$ of the stalk $(\mathcal{F}_{X,\alpha}^{\text{flat}})_{x_0}$, which is isomorphic to the space of germs of formal holomorphic functions at $x_0$, i.e., $V_{x_0}\cong\mathcal{O}_{X,x_0}[[\hbar]]$, such that for every neighborhood $U$ of $x_0$, $V_{x_0}$ is a representation of $(C^{\omega}(U)[[\hbar]],\star)$. 
\end{prop}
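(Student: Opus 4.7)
The plan is to define $V_{x_0}$ as the subspace of the stalk $(\mathcal{F}^{\text{flat}}_{X,\alpha})_{x_0}$ consisting of germs of the form
\[
\Psi_s := J_s \cdot e^{\beta/\hbar} \otimes e_{x_0}, \quad s \in \mathcal{O}_{X,x_0}[[\hbar]],
\]
with $\beta$ and $e_{x_0}$ chosen as in Proposition \ref{proposition: local-section-prequantum-Bargmann-Fock}. First I would verify that each $\Psi_s$ really determines such a germ: flatness $D_{B,\alpha}(\Psi_s) = 0$ is immediate from Theorem \ref{theorem: flat-section-Bargmann-Fock}, since $D_K(J_s) = 0$ by Proposition \ref{proposition:flat-sections-holomorphic-Weyl}; the boundedness condition \eqref{equation: convergent-Fock-boundedness-condition} holds on a neighborhood of $x_0$ by exactly the semi-continuity argument used in Proposition \ref{proposition: local-section-prequantum-Bargmann-Fock}, because multiplication by $J_s$ does not alter the exponent $\beta$, whose value and linear coefficients at $x_0$ vanish. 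The map $s \mapsto \Psi_s$ is manifestly $\C[[\hbar]]$-linear and injective, since $s = \sigma(J_s)$ is recovered from $\Psi_s$. This gives the isomorphism $V_{x_0} \cong \mathcal{O}_{X,x_0}[[\hbar]]$.

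The heart of the proof is showing that the $\circledast$-action of $(C^{\omega_\hbar}_X[[\hbar]], \star_\alpha)$ preserves $V_{x_0}$. For any $f \in C^{\omega_\hbar}_X(U)[[\hbar]]$ with $U$ sufficiently small, Theorem \ref{theorem: Bargmann-Fock-representation} ensures that $O_f \circledast \Psi_s$ is a well-defined flat section on a neighborhood of $x_0$, hence a flat germ. The key structural observation is that, because neither $J_s$ nor $\beta$ contains any $\bar y$, the shape ``$A(y)\,e^{\beta(y)/\hbar}$'' is preserved by each generator of the fiberwise action: $y^i$ acts by multiplication, while
\[
\bar y^j \circledast \bigl(A\,e^{\beta/\hbar}\bigr) = \hbar\frac{\omega^{i\bar j}}{2\sqrt{-1}}\bigl(\partial_{y^i}A + \hbar^{-1}A\,\partial_{y^i}\beta\bigr)\,e^{\beta/\hbar}
\]
is again of this shape. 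Consequently $O_f \circledast \Psi_s$ must take the form $A \cdot e^{\beta/\hbar}\otimes e_{x_0}$ for some section $A$ of $\W_X$; flatness of this germ, combined with Theorem \ref{theorem: flat-section-Bargmann-Fock}, then forces $A = J_{s'}$ for a unique $s' \in \mathcal{O}_{X,x_0}[[\hbar]]$, giving $O_f \circledast \Psi_s = \Psi_{s'} \in V_{x_0}$. Associativity of the action, $(O_f \star_\alpha O_g) \circledast \Psi_s = O_f \circledast (O_g \circledast \Psi_s)$, is then inherited from the fiberwise compatibility between the Wick product and the Bargmann-Fock representation (Lemma \ref{lemma: compatibility-Fedosov-connection}).

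I expect the main obstacle to be upgrading the term-by-term manipulation above to an honest identification of the infinite sum $O_f \circledast \Psi_s$ with a single section of the form $A(y)\,e^{\beta/\hbar}$: one must check that for each fixed holomorphic multi-index $I_0$ the series $\sum_J a_{I_0\bar J}\,\bar y^J \circledast (J_s \cdot e^{\beta/\hbar})$ converges on a neighborhood of $x_0$, and that a common $e^{\beta/\hbar}$ factor can legitimately be pulled out of the limit to yield a well-defined formal section $A$ of $\W_X$ to which Theorem \ref{theorem: flat-section-Bargmann-Fock} can be applied. Both points are controlled by Lemma \ref{lemma: anti-holomorphic-Taylor-series-convergent-radius} together with the boundedness condition \eqref{equation: convergent-Fock-boundedness-condition}, essentially by re-running the convergence analysis from the proof of Theorem \ref{theorem: Bargmann-Fock-representation}.
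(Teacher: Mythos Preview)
Your proposal is correct and follows essentially the same route as the paper: define $V_{x_0}$ via germs $\Psi_s = J_s\cdot e^{\beta/\hbar}\otimes e_{x_0}$, use Theorem~\ref{theorem: Bargmann-Fock-representation} to see that $O_f\circledast\Psi_s$ is a well-defined flat section of the form $A\cdot e^{\beta/\hbar}\otimes e_{x_0}$, and then invoke Theorem~\ref{theorem: flat-section-Bargmann-Fock} to conclude $A=J_{s'}$. Your write-up is in fact more explicit than the paper's (which compresses all of this into three sentences); the only minor slip is that associativity comes from the fiberwise identity $f\circledast(g\circledast s)=(f\star g)\circledast s$ of Definition~\ref{definition: Fock-representation-of-Wick-algebra}, not from Lemma~\ref{lemma: compatibility-Fedosov-connection}.
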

\begin{proof}
  Let $f\in C^{\omega}(U)$, then $O_f\circledast\left(J_s\cdot e^{\beta/\hbar}\otimes e_{x_0}\right)$ must be of the form $A\cdot e^{\beta/\hbar}\otimes e_{x_0}\in\mathcal{F}_{X,\alpha}^{\text{flat}}(U)$. Theorem \ref{theorem: flat-section-Bargmann-Fock} then implies that $A=J_{s'}$ for some $s'\in\mathcal{O}_X(U)[[\hbar]]$. By looking at the germs of these sections of $\mathcal{F}_{X,\alpha}^{\text{flat}}$ at $x_0$, we obtain the representation. 
\end{proof}

\begin{rmk}
 In a subsequent paper, we will show that if a K\"ahler manifold is prequantizable, then the formal variable $\hbar$ in the Bargmann-Fock sheaf can be replaced by $1/k$ for any positive integer $k$. Flat sections are then in a one-to-one correspondence to the holomorphic sections $H^0(X,L^{\otimes k})$. This is a generalization of Fedosov's original construction from quantum algebras to their modules. 
\end{rmk}


\subsection{Star products as formal Toeplitz operators}\label{section: star-product-formal-Toeplitz}
\

Suppose that the K\"ahler manifold $X$ is pre-quantizable with the prequantum line bundle given by $(L,\nabla_L)$.
We fix a K-coordinate system $(z_1,\cdots, z_n)$ at $x_0\in X$, and also a holomorphic frame $e_{L,x_0}$ of $L$ around $x_0$ satisfying $-\log||e_{L,x_0}||^2=\rho_{x_0}$ (we call this the {\em K-frame}). The local frame $e_{x_0}$ of $L_{\omega/\hbar}$ is the asymptotics of the local frame $e_{L,x_0}^{\otimes k}$ of $L^{\otimes k}$ as $k\rightarrow\infty$. The prequantum condition implies that
$$
\omega=\frac{\sqrt{-1}}{2\pi}\bar{\partial}\partial \log||e_{L,x_0}||^2
$$

We first recall the following proposition in Tian's paper \cite{Tian} and, in particular, the notion of {\em peak sections}.  We define the function $r(z) := \sqrt{|z_1|^2+\cdots+|z_n|^2}$.
\begin{prop}[Lemma 1.2 in \cite{Tian}]\label{proposition: peak-section}
	For a multi-index $p=(p_1,\cdots,p_n)\in\mathbb{Z}_+^n$ and an integer $r>|p|=p_1+\cdots+p_n$, there exists $m_0>0$ such that, for $m>m_0$, there is a holomorphic global section $S$, called a {\em peak section}, of the line bundle $L^{\otimes m}$, satisfying 
	\begin{equation}\label{equation: property-peak-section-norm}
 		\int_X ||S||^2_{h^m}dV_g=1,\quad \int_{X\setminus\{r(z)\leq\frac{\log m}{\sqrt{m}}\}}||S||_{h^m}^2dV_g=O\left(\frac{1}{m^{2r}}\right),
	\end{equation}
	and locally at $x_0$ under the K-coordinates, 
	\begin{equation}\label{equation: Taylor-expansion-peak-section}
 		S(z)=\lambda_{m,p}\cdot\left(z_1^{p_1}\cdots z_n^{p_n}+O(|z|^{2r})\right)e_{L,x_0}^m\left(1+O\left(\frac{1}{m^{2r}}\right)\right),
	\end{equation}
	where $||\cdot||_{h^m}$ is the norm on $L^{\otimes m}$ given by $h^m$, and $O\left(\frac{1}{m^{2r}}\right)$ denotes a quantity dominated by $C/m^{2r}$ with the constant $C$ depending only on $r$ and the geometry of $X$, moreover
	\begin{equation}\label{equation: coefficient-leading-term-Taylor-expansion-peak-section}
 		\lambda^{-2}_{m,p}=\int_{r(z)\leq\log m/\sqrt{m}}|z_1^{p_1}\cdots z_n^{p_n}|^2\cdot e^{-m\cdot\rho_{x_0}(z)} dV_g,
	\end{equation}
	where $dV_g=\frac{\omega^n}{n!}=(\sqrt{-1})^n\cdot h(z,\bar{z})\cdot dz^1\wedge d\bar{z}^1\wedge\cdots \wedge dz^n\wedge d\bar{z}^n$ is the volume form. 
\end{prop}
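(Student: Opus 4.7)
The plan is to follow the standard H\"ormander $L^2$-method, building the peak section from an approximate one that is then corrected to be holomorphic. First I would fix a smooth cutoff function $\eta:\mathbb{R}_{\geq 0}\to[0,1]$ with $\eta(t)=1$ for $t\leq 1/2$ and $\eta(t)=0$ for $t\geq 1$, and define
$$
\tilde{S}(z) := \eta\!\left(\frac{\sqrt{m}\, r(z)}{\log m}\right)\cdot z_1^{p_1}\cdots z_n^{p_n}\cdot e_{L,x_0}^{\otimes m},
$$
extended by zero outside the K-coordinate chart. Its pointwise $h^m$-norm squared equals $\eta^{2}\cdot |z^p|^2\cdot e^{-m\rho_{x_0}(z)}$, which, by the Taylor expansion \eqref{equation: Taylor-expansion-hermitian-metric} of $\rho_{x_0}$, is a sharply peaked Gaussian-type bump of width $\sim 1/\sqrt{m}$ around $x_0$.

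Since $z^p e_{L,x_0}^{\otimes m}$ is holomorphic, $\bar{\partial}\tilde{S}$ is supported in the annulus $A_m := \{\log m/(2\sqrt{m})\leq r(z)\leq \log m/\sqrt{m}\}$. There, after possibly shrinking the chart, $\rho_{x_0}(z)\geq c\, r(z)^2 \geq c(\log m)^2/(4m)$ for some $c>0$, so
$$
\|\bar{\partial}\tilde{S}\|_{L^2,h^m}^2 \;\leq\; C\, m^{-c\log m},
$$
which decays faster than any polynomial in $1/m$. For $m$ large enough, the curvature of $L^{\otimes m}\otimes K_X^{-1}$ is strictly positive, so H\"ormander's $L^2$-estimate produces $u\in L^2(X, L^{\otimes m})$ solving $\bar{\partial} u = \bar{\partial}\tilde{S}$ with $\|u\|_{L^2,h^m}^2 \leq \tfrac{C}{m}\|\bar{\partial}\tilde{S}\|_{L^2,h^m}^2$, which is still superpolynomially small in $m$. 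Then $S' := \tilde{S}-u$ is a global holomorphic section of $L^{\otimes m}$.

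Finally I would translate these global $L^2$ bounds into pointwise control near $x_0$. Because $\bar{\partial}\tilde{S}$ vanishes on the ball of radius $\log m/(2\sqrt{m})$ around $x_0$, the correction $u$ is holomorphic there. Applying the sub-mean-value inequality to $u$ and (via the Cauchy integral formula) to its derivatives of order up to $2r$ on a ball of radius $\sim 1/\sqrt{m}$ then gives $|\partial^\alpha u(x_0)|=O(m^{-N})$ for every $N$, so $u$ contributes a term of size $O(m^{-2r})$ relative to the leading $z^p$-term. Writing $S'=\lambda_{m,p}\bigl(z^p+O(|z|^{2r})\bigr)e_{L,x_0}^{\otimes m}\cdot\bigl(1+O(m^{-2r})\bigr)$, dividing by its $L^2$-norm, and identifying the normalising constant with $\lambda_{m,p}^{-1}$ in \eqref{equation: coefficient-leading-term-Taylor-expansion-peak-section} then yields both \eqref{equation: property-peak-section-norm} and \eqref{equation: Taylor-expansion-peak-section}.

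The main obstacle is the bookkeeping in the last step: one has to track the dependence of every error on $m$, $p$ and $r$, verify that the $\bar{\partial}$-correction $u$ spoils neither the normalising constant $\lambda_{m,p}$ nor the polynomial jet at $x_0$, and check that the $L^2$-concentration statement in \eqref{equation: property-peak-section-norm} survives the subtraction of $u$. The crucial analytic input that makes all estimates consistent is the Gaussian decay of $e^{-m\rho_{x_0}}$ on the support of $\bar{\partial}\tilde{S}$, which turns polynomial cost into superpolynomial savings and is exactly what allows the exponential small-mass bounds to dominate every finite-order derivative estimate.
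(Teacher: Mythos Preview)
The paper does not supply its own proof of this proposition: it is quoted verbatim as Lemma~1.2 of Tian's paper \cite{Tian} and used as a black box, so there is nothing in the present paper to compare your argument with. Your sketch is, in outline, exactly Tian's original proof---construct an approximate section by cutting off $z^p e_{L,x_0}^{\otimes m}$, solve $\bar\partial u=\bar\partial\tilde S$ via H\"ormander's $L^2$-estimates using the positivity of the curvature of $L^{\otimes m}\otimes K_X^{-1}$, and exploit the Gaussian decay of $e^{-m\rho_{x_0}}$ on the annulus to make the correction superpolynomially small.

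One point that deserves more care than you give it: after subtracting $u$, the Taylor expansion of $S'$ at $x_0$ is $z^p$ plus lower-order monomials with superpolynomially small coefficients, not $z^p+O(|z|^{2r})$ on the nose. In Tian's argument this is handled by an additional orthogonal projection (within $H^0(X,L^{\otimes m})$) onto the complement of the span of the other peak sections of degree $<2r$; the projection changes $S'$ by an amount whose $L^2$-norm is again superpolynomially small, and only after this step does the expansion take the clean form \eqref{equation: Taylor-expansion-peak-section}. Your final paragraph gestures at this (``bookkeeping''), but since this projection is the step that produces the precise Taylor form and the exact identification of $\lambda_{m,p}$, it should be stated explicitly rather than folded into the error analysis.
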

We normalize the peak section $S(z)$ in Proposition \ref{proposition: peak-section} and define 
$$
S_{m,p,r}:=\frac{1}{\lambda_{m,p}}\cdot\left(1+O\left(\frac{1}{m^{2r}}\right)\right)\cdot S=\left(z_1^{p_1}\cdots z_n^{p_n}+O(|z|^{2r})\right)\cdot e_{L,x_0}^m
$$
These normalized peak sections $S_{m,p,r}$ are roughly speaking, global holomorphic sections of $L^{\otimes m}$ such that
\begin{enumerate}
 \item its norm is concentrated around a given point $x_0$ on the K\"ahler manifold, and
 \item modulo higher order terms, its Taylor expansion at $x_0$ is $z_1^{p_1}\cdots z_n^{p_n}$ with respect to the K-coordinates and K-frame at $x_0$.
\end{enumerate}

In \cite{Chan-Leung-Li}, we constructed a formal Hilbert space $H_{x_0}$ as a sub-quotient of the vector space generated by these normalized peak sections $S_{m,p,r}$, and prove that this gives a nice representation of the Berezin-Toeplitz deformation quantization algebra.
Let us briefly recall the construction here. Let $S_{m,p_1,r}, S_{m,p_2,r}$ be normalized peak sections of $L^{\otimes m}$ with $r>>|p_1|,|p_2|$, we can identify them with holomorphic functions $f_1, f_2$ with respect to the K-frame $e_{L,x_0}$. Then the following integral 
\begin{equation}\label{equation: normalized-inner-product}
m^n\cdot\int_X \langle s_1,s_2\rangle_{h^m}dV_g=m^n\cdot\int_Xf_1(z)\bar{f}_2(z)\cdot e^{-m\cdot\rho_{x_0}(z,\bar{z})}\cdot\left(\frac{\sqrt{-1}}{2}\right)^n h(z,\bar{z})dz^1 d\bar{z}^1\cdots dz^n d\bar{z}^n
\end{equation}
is also concentrated around $x_0$, where it is a Gaussian integral. Thus these integrals have asymptotics as $m\rightarrow\infty$ given by Feynman graph expansions, for which we need to know the Taylor expansions of $f_1, f_2,\rho_{x_0}$ and $\log(h(z,\bar{z}))$ at $x_0$. For the first three functions, their Taylor expansions at $x_0$ are given by $(J_{f_1})_{x_0}, (J_{f_2})_{x_0}$ and $(\Phi_\omega)_{x_0}$ respectively. 
\begin{lem}\label{lemma: local-volume-form-purely-holomorphic-derivatives-vanish}
 The purely (anti-) holomorphic derivatives of $h$ vanishes at $z_0$ under the K-coordinates:
$$
\frac{\partial^{|I|} h}{\partial z^I}(x_0)=\frac{\partial^{|J|} h}{\partial \bar{z}^I}(x_0)=0,
$$
for all mutli-indices with $|I|,|J|>0$, and  $h(x_0)=1$. 
\end{lem}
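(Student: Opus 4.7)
The plan is to express $h$ via the Kähler potential and then read off both statements directly from the Taylor expansion of $\rho_{x_0}$ in equation \eqref{equation: Taylor-expansion-hermitian-metric}.

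First I would expand the top power $\omega^n/n!$ of the Kähler form. Writing $\omega = \omega_{i\bar{j}}\,dz^i\wedge d\bar{z}^j$ and computing, we obtain
$$\frac{\omega^n}{n!} = \det\!\bigl(\omega_{i\bar{j}}\bigr)\cdot n!^{-1}\cdot \Bigl(\sum_{\sigma,\tau\in S_n} \op{sgn}(\sigma\tau)\Bigr)\,dz^1\wedge d\bar{z}^1\wedge\cdots\wedge dz^n\wedge d\bar{z}^n$$
up to the appropriate combinatorial constant; comparing with $dV_g = (\sqrt{-1})^n h\prod_k dz^k\wedge d\bar{z}^k$ shows that $h$ is a fixed universal constant times $\det(\omega_{i\bar{j}})$. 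Since $\partial_i\partial_{\bar{j}}\rho_{x_0} = -2\sqrt{-1}\,\omega_{i\bar{j}}$, up to a further constant we can equivalently work with $H_{i\bar{j}}:=\partial_i\partial_{\bar{j}}\rho_{x_0}$, i.e.\ $h = c\cdot\det(H_{i\bar{j}})$ for an explicit positive constant $c$.

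Next, evaluating at $x_0$: from the quadratic term of \eqref{equation: Taylor-expansion-hermitian-metric} together with the normalization $-2\sqrt{-1}\,\omega_{i\bar{j}}(x_0)=\delta_{ij}$ built into the definition of K-coordinates (Definition \ref{definition: K-coordinates-K-frame}), we read off $H_{i\bar{j}}(x_0)=\delta_{ij}$. Hence $\det(H_{i\bar{j}}(x_0))=1$, and one checks that the constant $c$ above is exactly $1$ so that $h(x_0)=1$.

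For the vanishing of the pure derivatives, the key observation is that all Taylor coefficients of $\rho_{x_0}$ of the form $z^I\bar{z}^J$ with $|I|\geq 1$, $|J|\leq 1$ (or $|J|\geq 1$, $|I|\leq 1$) are zero: the quadratic term is the only one with $|I|=|J|=1$, and by \eqref{equation: Taylor-expansion-hermitian-metric} the remaining sum ranges over $|I|,|J|\geq 2$. Differentiating, for any multi-index $\alpha$ with $|\alpha|\geq 1$ we have
$$\frac{\partial^{|\alpha|}H_{i\bar{j}}}{\partial z^{\alpha}}(x_0) \;=\; \frac{\partial^{|\alpha|+2}\rho_{x_0}}{\partial z^{\alpha+e_i}\,\partial \bar{z}^{e_j}}(x_0) \;=\; 0,$$
since the corresponding monomial has antiholomorphic degree only $1$. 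Thus every purely holomorphic derivative of each entry $H_{i\bar{j}}$ vanishes at $x_0$, and by complex conjugation so does every purely antiholomorphic derivative.

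Finally, I combine these via the determinant. By the Leibniz rule, $\partial_z^{\alpha}\det(H_{i\bar{j}})$ is a finite sum of products of factors of the form $\partial_z^{\beta}H_{i\bar{j}}$ with $\sum\beta=\alpha$. When $|\alpha|\geq 1$, at least one $\beta$ in each such product has $|\beta|\geq 1$, and by the previous step that factor vanishes at $x_0$; so the entire sum vanishes. The anti-holomorphic case is identical. There is essentially no main obstacle here—the only thing to verify carefully is the bookkeeping of the normalization constant $c$ giving $h(x_0)=1$, which is purely a convention check.
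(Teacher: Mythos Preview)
Your proof is correct and follows essentially the same route as the paper: both reduce to showing that the purely holomorphic derivatives of the entries $\omega_{i\bar{j}}=\tfrac{1}{-2\sqrt{-1}}\partial_i\partial_{\bar j}\rho_{x_0}$ vanish at $x_0$ (via the form of the Taylor expansion \eqref{equation: Taylor-expansion-hermitian-metric}), and then conclude for $h$ by the product/Leibniz rule since $h$ is a constant times $\det(\omega_{i\bar j})$. Your write-up is slightly more explicit about the determinant and the Leibniz step, but the idea is identical.
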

\begin{proof}
The equality $h(x_0)=1$ follows from the definition of K-coordinates. We will show the vanishing of purely holomorphic derivatives at $x_0$; the proof for antiholomorphic ones is the same. It suffices to show that the statement is valid for functions $\omega_{i_1\bar{j_1}}\cdots\omega_{i_n\bar{j_n}}$, where
$$
2\sqrt{-1}\cdot\omega_{i\bar{j}}=\frac{\partial^2\rho_{x_0}}{\partial z^i\partial \bar{z}^j},
$$
The Taylor expansion of $\rho_{x_0}$ in equation \eqref{equation: Taylor-expansion-hermitian-metric} implies that
$$
\frac{\partial^{|I|+1}\rho_{x_0}}{\partial z^I\partial\bar{z}^j}(x_0)=0,\ |I|\geq 2,
$$
from which the first statement follows easily.  
\end{proof}
Since $h(z,\bar{z})$ is the Hermitian metric on the anti-canonical line bundle $K_X$ induced by the K\"ahler structure under the frame $\partial_{z^1}\wedge\cdots\wedge \partial_{z^n}$, there is
$$
\partial\bar{\partial}\log(h(z,\bar{z}))=R_{i\bar{j}k}^kdz^i\wedge d\bar{z}^j.
$$
Let $\alpha:=-\hbar\cdot\partial\bar{\partial}(\log h(z,\bar{z}))=-\hbar\cdot R_{i\bar{j}k}^kdz^i\wedge d\bar{z}^j$. Then Lemma \ref{lemma: local-volume-form-purely-holomorphic-derivatives-vanish} implies that the Taylor expansion of $\log(h(z,\bar{z}))$ at $x_0$ under $K$-coordinates is exactly given by 
$$
\left(-\frac{1}{\hbar}\Phi_\alpha\right)_{x_0}.
$$
Let $\Phi:=|y|^2+2\sqrt{-1}\cdot(\Phi_\omega)_{x_0}-(\Phi_\alpha)_{x_0}$. 
Let $\hbar=1/m$, then the asymptotic of the integral \eqref{equation: normalized-inner-product} is given by the following formal integral: 
$$
\frac{1}{\hbar^n}\cdot\int (J_{f_1})_{x_0}\cdot (J_{\bar{f}_2})_{x_0}\cdot e^{(2\sqrt{-1}\cdot\Phi_{\omega}/\hbar+\Phi_{\partial\bar{\partial}(\log(h(z,\bar{z}))})_{x_0}}=\frac{1}{\hbar^n}\cdot\int (J_{f_1})_{x_0}\cdot (J_{\bar{f}_2})_{x_0}\cdot e^{\frac{-|y|^2+\Phi}{\hbar}}.
$$
By taking the asymptotics of $S_{m,p,r}$'s as $m,r\rightarrow\infty$, we can ignore the remainder terms and essentially get the monomial $z_1^{p_1}\cdots z_n^{p_n}$. This is roughly how we can define the formal Hilbert space:
$$
H_{x_0}\cong \C[[z_1,\cdots, z_n]][[\hbar]],
$$
equipped with formal inner product defined via the above formal integral.

The vector space $V_{x_0}$ in Proposition \ref{proposition: representation-stalk} is  naturally a subspace of $H_{x_0}$ consisting of those formal power series which are convergent in some neighborhood of $0\in\C^n$. 
We have the following theorem:
\begin{thm}\label{theorem: Bargmann-Fock-microlocal-Toeplitz}
The representation of $C^{\omega_\hbar}(X)[[\hbar]]$ on $V_{x_0}$ defined in Proposition \ref{proposition: representation-stalk} is given explicitly as follows: 
Let $f\in C^{\omega_\hbar}(X)[[\hbar]] $ and $\Psi_s:=J_s\cdot e^{\beta/\hbar}\otimes e_{x_0}\in V_{x_0}\subset(\mathcal{F}_{X,\alpha}^{\text{flat}})_{x_0}$ where $s\in\mathcal{O}_{X,x_0}[[\hbar]]$. Then 
$$
O_f\circledast\left(J_s\cdot e^{\beta/\hbar}\otimes e_{x_0}\right)=J_{s'}\cdot e^{\beta/\hbar}\otimes e_{x_0},
$$
where $s'\in\mathcal{O}_{X,x_0}[[\hbar]]$ is determined by its jets $J_{s'}$ at $x_0$ explicitly given by
$$
T_{(J_f)_{x_0},\Phi}\left(J_s\right) = J_{s'}.
$$
In other words,  $s^{\prime }$ is obtained from the formal Toeplitz operation using $f$ on $s$.
\end{thm}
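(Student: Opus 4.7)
The strategy is to first show that $O_f\circledast\Psi_s$ is a flat section whose form is forced to be $J_{s'}\cdot e^{\beta/\hbar}\otimes e_{x_0}$ for some germ $s'\in\mathcal{O}_{X,x_0}[[\hbar]]$, and then to identify this $s'$ by restricting to the point $x_0$ in the chosen K-coordinate system, where the exponential factor trivializes. For the first part, Lemma \ref{lemma: compatibility-Fedosov-connection} ensures that $O_f\circledast\Psi_s$ is $D_{B,\alpha}$-flat, and the Bargmann-Fock action is well-defined by Theorem \ref{theorem: Bargmann-Fock-representation}. Writing $O_f=\sum_{I,J} a_{I\bar J}y^I\bar y^J$, the $\bar y^J$ factors act as holomorphic derivatives in $y$, which preserves the functional form $A\cdot e^{\beta/\hbar}\otimes e_{x_0}$ for some element $A$ of the extended holomorphic Weyl bundle. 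Flatness together with Theorem \ref{theorem: flat-section-Bargmann-Fock} then forces $A=J_{s'}$ for a unique $s'\in\mathcal{O}_{X,x_0}[[\hbar]]$.

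To identify $s'$, I would evaluate both sides at $x_0$. Proposition \ref{proposition: local-section-prequantum-Bargmann-Fock} gives $\beta|_{x_0}=0$ in K-coordinates, so in the fiber over $x_0$ the identity reduces to
$$(O_f)_{x_0}\circledast(J_s)_{x_0}=(J_{s'})_{x_0}.$$
Since $s'$ is determined uniquely by its jet $(J_{s'})_{x_0}$ via Proposition \ref{proposition:flat-sections-holomorphic-Weyl}, it suffices to match the left-hand side with $T_{(J_f)_{x_0},\Phi}((J_s)_{x_0})$. This matching follows from Theorem \ref{thm:main-thm-formal-Toeplitz-Fedosov-equation}: restricting the global identity $J_f\cdot e^{\Phi/\hbar}=e^{\Phi/\hbar}\star O_f$ to $x_0$ yields $(J_f)_{x_0}\cdot e^{\Phi/\hbar}=e^{\Phi/\hbar}\star(O_f)_{x_0}$, where the K-coordinate normalization $-2\sqrt{-1}\omega_{i\bar j}(x_0)=\delta_{ij}$ converts the leading quadratic piece of $\Phi$ into $|y|^2$, so that $\Phi|_{x_0}=|y|^2+2\sqrt{-1}(\Phi_\omega)_{x_0}-(\Phi_\alpha)_{x_0}$ exactly matches the interaction term used to set up the formal Hilbert space $H_{x_0}$. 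This is precisely the defining equation in Proposition \ref{proposition: formula-formal-Toeplitz}(3) for the formal Toeplitz operator with interaction $\Phi$ and symbol $(J_f)_{x_0}$, and part (1) of that proposition then gives $(O_f)_{x_0}\circledast(J_s)_{x_0}=T_{(J_f)_{x_0},\Phi}((J_s)_{x_0})$, as required.

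The main obstacle lies in the first step: one must carefully justify that the Bargmann-Fock action preserves the exponential-times-holomorphic-jet form despite the infinite summations appearing when $O_f$ has infinitely many $\bar y$-terms. This requires the real-analyticity assumption $f\in C^{\omega_\hbar}_X$ combined with Lemma \ref{lemma: convergence-of-derivatives-series} to interchange the Bargmann-Fock derivatives with the limiting procedure defining the sum; the argument parallels the one already used in the proof of Theorem \ref{theorem: Bargmann-Fock-representation}. Once convergence is secured, the remaining identification is essentially an algebraic comparison of two defining equations for the same object $(O_f)_{x_0}$, one coming from the Fedosov construction and the other from the formal Toeplitz formalism.
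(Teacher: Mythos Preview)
Your proposal is correct and follows essentially the same route as the paper: reduce to the fiber at $x_0$ using $\beta|_{x_0}=0$, obtain $(O_f)_{x_0}\circledast(J_s)_{x_0}=(J_{s'})_{x_0}$, and then invoke Theorem \ref{thm:main-thm-formal-Toeplitz-Fedosov-equation} together with the characterization in Proposition \ref{proposition: formula-formal-Toeplitz}(3) to identify this with the formal Toeplitz operator. Your first step and the accompanying discussion of convergence are redundant, since the fact that $O_f\circledast\Psi_s$ has the form $J_{s'}\cdot e^{\beta/\hbar}\otimes e_{x_0}$ is exactly the content of Proposition \ref{proposition: representation-stalk}, which the statement already cites; the paper's proof accordingly omits it.
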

\begin{proof}
It is easy to see that the action of $\W_{X,\C}$ on $\mathcal{F}_X$ is linear over smooth functions. On the other hand,  $J_{s'}$ determines $s'$ since it is the Taylor expansion of the holomorphic function $s'$ at $x_0$. We have seen that $\beta|_{x_0}=0$ in the proof of Proposition \ref{proposition: local-section-prequantum-Bargmann-Fock}. We have
$$
J_{s'}=(O_f)_{x_0}\circledast J_s.
$$
On the other hand, Theorem \ref{thm:main-thm-formal-Toeplitz-Fedosov-equation} says that
$$
J_f\cdot e^{\Phi/\hbar}=e^{\Phi/\hbar}\star O_f.
$$
Comparing with equation \eqref{equation: formal-Toeplitz-operator-equality}, the result follows. 
\end{proof}

It is not difficult to see from the above construction and computation that we do not need to assume that $X$ admits a prequantum line bundle. For every Wick type star product whose Karabegov form is real analytic, there is a subspace in the stalk of the Bargmann-Fock sheaf, similar to $V_{x_0}$ above, such that formal smooth functions act on as formal Berezin-Toeplitz operators. 




\appendix

\section{Proof of Theorem \ref{thm:main-thm-formal-Toeplitz-Fedosov-equation}}\label{appendix: proof-theorem-jets-flat-section}
Let $\Phi := 2\sqrt{-1}\left(-\omega_{i\bar{j}}y^i\bar{y}^j+\Phi_\omega\right)-\Phi_\alpha$. It is clear that $$e^{\Phi/\hbar} := 1+\Phi/\hbar+\frac{1}{2!}(\Phi/\hbar)^2+\cdots$$
is an invertible section in $\W_{X,\C}^+$ under the Wick product, and we denote by $\left(e^{\Phi/\hbar}\right)^{-1}$ its inverse.

\begin{lem}\label{lemma: Fedosov-connection-jet-connection-conjugate}
Let $O$ be any section of the Weyl bundle. Then we have
\begin{equation}\label{equation: Fedosov-connection-jet-connection-conjugate-0-1}
(\nabla^{0,1}-\delta^{0,1})\left(e^{\Phi/\hbar}\star O\star(e^{\Phi/\hbar})^{-1}\right)=e^{\Phi/\hbar}\star D_{F,\alpha}^{0,1}(O)\star(e^{\Phi/\hbar})^{-1}.
\end{equation}
In other words, the operators $D_{F,\alpha}^{0,1}$ and $\nabla^{0,1}-\delta^{0,1}$ differ by the gauge action by $e^{\Phi/\hbar}$. 
\end{lem}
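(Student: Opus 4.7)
The strategy is a gauge transformation argument. Set $U := e^{\Phi/\hbar}$ and $D := \nabla^{0,1} - \delta^{0,1}$. Because the Levi-Civita connection preserves the K\"ahler form (hence the fiberwise Wick product) and $\delta^{0,1}$ manifestly derives $\star$, the operator $D$ obeys the graded Leibniz rule for $\star$. Expanding the left-hand side of \eqref{equation: Fedosov-connection-jet-connection-conjugate-0-1} therefore gives
\begin{equation*}
D(U)\star O \star U^{-1} + U\star D(O) \star U^{-1} + U\star O \star D(U^{-1}).
\end{equation*}

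Introduce the gauge potential $A := U^{-1} \star D(U) \in \A_X^{0,1}(\W_{X,\C}^+)$. Differentiating $U\star U^{-1}=1$ yields $D(U^{-1}) = -A\star U^{-1}$, so the expansion above collapses to $U\star\bigl(D(O) + [A,O]_\star\bigr)\star U^{-1}$. Using the presentation $D_{F,\alpha}^{0,1} = \nabla^{0,1} - \delta^{0,1} + \frac{1}{\hbar}[I_\alpha,-]_\star = D + \frac{1}{\hbar}[I_\alpha,-]_\star$ obtained from the first form $D_F=\nabla-\delta+\frac{1}{\hbar}[I,-]_\star$ of the Fedosov connection, the lemma reduces to the single identity
\begin{equation*}
A \;=\; \frac{1}{\hbar}\, I_\alpha, \qquad \text{i.e.,} \qquad (\nabla^{0,1}-\delta^{0,1})\bigl(e^{\Phi/\hbar}\bigr) \;=\; \frac{1}{\hbar}\, e^{\Phi/\hbar}\star I_\alpha.
\end{equation*}

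To verify this identity, I would compute the left-hand side via the chain rule $D(e^{\Phi/\hbar}) = \frac{1}{\hbar}D(\Phi)\cdot e^{\Phi/\hbar}$ (legitimate because $e^{\Phi/\hbar}$ is defined as a power series in the ordinary product) and expand $D(\Phi)$ using
\begin{equation*}
\Phi = -2\sqrt{-1}\,\omega_{i\bar j}\,y^i\bar y^j + 2\sqrt{-1}\,\Phi_\omega - \Phi_\alpha.
\end{equation*}
Three structural inputs then organize the answer: (i) the piece $\delta^{0,1}(-2\sqrt{-1}\omega_{i\bar j}y^i\bar y^j) = -2\sqrt{-1}\omega_{i\bar j}\,y^i d\bar z^j$ provides exactly the term by which $\gamma_\alpha^{0,1}$ differs from $I_\alpha$, implementing the inner realization of $-\delta^{0,1}$; (ii) the classical flatness $D_C(J_\varphi)=0$ of Proposition~\ref{proposition: classical-flat-section-smooth-function} converts $\nabla^{0,1}\Phi_\omega$ into expressions involving the curvature derivations $\tilde R_n^*$, and relation \eqref{equation: relation-Phi-I} packages these into the curvature-pure summand $I=\sum_{n\geq 2}I_n$ of $I_\alpha$; (iii) the analogous formula $J_\alpha = \delta^{0,1}\bigl(\sum_{k\geq 1}(\Phi_\alpha)_{k,1}\bigr)$ from Section~2.1 supplies the remaining $J_\alpha$-contribution in $I_\alpha = I + J_\alpha$.

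The main obstacle is the transition from the commutative product to the Wick product: $\frac{1}{\hbar}D(\Phi)\cdot e^{\Phi/\hbar}$ and $\frac{1}{\hbar}e^{\Phi/\hbar}\star I_\alpha$ differ by the fiberwise contractions that $\star$ introduces at every order in $\hbar$, and these must be absorbed by the higher-bidegree components of $\Phi$. I expect the Fedosov equation \eqref{equation: Fedosov-equation-form-1} to enter precisely at this point as an integrability condition forcing the contractions to match term-by-term, so that $A = I_\alpha/\hbar$ can be established inductively on Weyl degree rather than in one closed-form step.
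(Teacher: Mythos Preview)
Your overall strategy---reduce via the $\star$-Leibniz rule to the single identity $(\nabla^{0,1}-\delta^{0,1})e^{\Phi/\hbar}=\tfrac{1}{\hbar}\,e^{\Phi/\hbar}\star I_\alpha$---is exactly the paper's, and your reduction is correct. The divergence is in how you propose to verify that identity.

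The paper (working with $\alpha=0$) exploits one structural fact you do not mention: $I$, equivalently $-\delta^{0,1}(\Phi_{*,2})$, has $\bar y$-degree exactly one. Hence the Wick expansion of $e^{\Phi/\hbar}\star(-\delta^{0,1}\Phi_{*,2})$ terminates after a \emph{single} contraction,
\[
e^{\Phi/\hbar}\star\bigl(-\delta^{0,1}\Phi_{*,2}\bigr)=\Bigl(-\delta^{0,1}\Phi_{*,2}-\tfrac{\sqrt{-1}}{2}\,\omega^{i\bar j}\,\tfrac{\partial\Phi}{\partial y^i}\,\tfrac{\partial}{\partial\bar y^j}\bigl(\delta^{0,1}\Phi_{*,2}\bigr)\Bigr)\cdot e^{\Phi/\hbar},
\]
so no induction on Weyl weight and no appeal to the Fedosov equation \eqref{equation: Fedosov-equation-form-1} are needed. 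Comparing with $\tfrac{1}{\hbar}(\nabla^{0,1}-\delta^{0,1})(\Phi)\cdot e^{\Phi/\hbar}$ reduces everything to the recursion
\[
\delta^{0,1}\Phi_{*,k+1}=\nabla^{0,1}\Phi_{*,k}+\tfrac{\sqrt{-1}}{2}\,\omega^{i\bar j}\,\tfrac{\partial\Phi_{*,k}}{\partial y^i}\,\tfrac{\partial}{\partial\bar y^j}\bigl(\delta^{0,1}\Phi_{*,2}\bigr),\qquad k\ge 2,
\]
which is precisely the componentwise form of $D_C^{0,1}(J_\varphi)=0$ once \eqref{equation: relation-Phi-I} is used to rewrite the $\tilde R_n^*$-terms. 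So your item (ii) is the correct ingredient and in fact carries the whole load; the identification $-\delta^{0,1}(\Phi_{n,2})=I_n$ from \eqref{equation: relation-Phi-I} is then the last step.

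Your item (i), on the other hand, does not contribute. For $\alpha=0$ one has
\[
\Phi_{*,1}=2\sqrt{-1}\bigl(-\omega_{i\bar j}y^i\bar y^j+(\Phi_\omega)_{*,1}\bigr)=0,
\]
since $(\Phi_\omega)_{1,1}=\omega_{i\bar j}y^i\bar y^j$ and $(\Phi_\omega)_{k,1}=0$ for $k\ge 2$ by Lemma~\ref{lemma: Taylor-expansion-Kahler-potential-vanishing-k-1-terms}. Thus the piece you single out cancels before it can play any role, and the sum in $(\nabla^{0,1}-\delta^{0,1})\Phi$ begins at $\bar y$-degree $2$; this is why the paper writes $\sum_{k\ge 2}$ from the outset.
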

\begin{proof}
 We will restrict our attention to the case where $\alpha=0$; the general case is similar. The operator $\left(\nabla^{0,1}-\delta^{0,1}\right)$ is a derivation with respect to both the classical and quantum product on $\W_{X,\C}$, there is 
 \begin{align*}
 &\left(\nabla^{0,1}-\delta^{0,1}\right)e^{\Phi/\hbar}\\
 =&\frac{1}{\hbar}\left(\nabla^{0,1}\Phi-\delta^{0,1}\Phi\right)\cdot e^{\Phi/\hbar}\\
 =&\frac{1}{\hbar}\sum_{k\geq 2}\left(\nabla^{0,1}\Phi_{*,k}-\delta^{0,1}\Phi_{*,k}\right)\cdot e^{\Phi/\hbar}\\
 =&-\frac{1}{\hbar}(\delta^{0,1}\Phi_{*,2})\cdot e^{\Phi/\hbar}+\frac{1}{\hbar}\sum_{k\geq 2}\left(\nabla^{0,1}\Phi_{*,k}-\delta^{0,1}\Phi_{*,k+1}\right)\cdot e^{\Phi/\hbar}\\
 =&-\frac{1}{\hbar}e^{\Phi/\hbar}\star\left(\delta^{0,1}(\Phi_{*,2})\right)+\frac{\sqrt{-1}}{2 \hbar}\omega^{i\bar{j}}\frac{\partial\Phi}{\partial y^i}\cdot\frac{\partial}{\partial\bar{y}^j}\left(\delta^{0,1}(\Phi_{*,2})\right)\cdot e^{\Phi/\hbar}\\
 &\hspace{6mm}+\frac{1}{\hbar}\sum_{k\geq 2}\left(\nabla^{0,1}\Phi_{*,k}-\delta^{0,1}\Phi_{*,k+1}\right)\cdot e^{\Phi/\hbar}\\
 =&-\frac{1}{\hbar}e^{\Phi/\hbar}\star\left(\delta^{0,1}(\Phi_{*,2})\right)\\
 &\hspace{6mm}+\frac{1}{\hbar}\sum_{k\geq 2}\left(\nabla^{0,1}\Phi_{*,k}-\delta^{0,1}\Phi_{*,k+1}+\frac{\sqrt{-1}}{2}\omega^{i\bar{j}}\frac{\partial\Phi_{*,k}}{\partial y^i}\cdot\frac{\partial}{\partial\bar{y}^j}\left(\delta^{0,1}(\Phi_{*,2})\right)\cdot e^{\Phi/\hbar}\right)\cdot e^{\Phi/\hbar}\\
 =&-\frac{1}{\hbar}e^{\Phi/\hbar}\star\left(\delta^{0,1}(\Phi_{*,2})\right).
 \end{align*}
 In the last line, we have used the following identity:
 $$
 \delta^{0,1}(\Phi_{m,n+1})=\nabla^{0,1}(\Phi_{m,n})+\frac{\sqrt{-1}}{2}\omega^{i\bar{j}}\frac{\partial\Phi}{\partial y^i}\cdot\frac{\partial}{\partial\bar{y}^j}\left(\delta^{0,1}(\Phi_{*,2})\right).
 $$
 Since $e^{\Phi/\hbar}\star(e^{\Phi/\hbar})^{-1}=1$, it is easy to show that:
 $$
 (\nabla^{0,1}-\delta^{0,1})(e^{\Phi/\hbar})^{-1}=\frac{1}{\hbar}\delta^{0,1}(\Phi_{*,2})\star(e^{\Phi/\hbar})^{-1}.
 $$
Using the fact that $\nabla^{0,1}-\delta^{0,1}$ is a derivation with respect to $\star$, we have 
\begin{align*}
&(\nabla^{0,1}-\delta^{0,1})\left(e^{\Phi/\hbar}\star O\star (e^{\Phi/\hbar})^{-1}\right)\\
=&e^{\Phi/\hbar}\star(-\frac{1}{\hbar}\delta^{0,1}(\Phi_{*,2}))\star O\star (e^{\Phi/\hbar})^{-1}+e^{\Phi/\hbar}\star(\nabla^{0,1}O-\delta^{0,1}O)\star (e^{\Phi/\hbar})^{-1}\\
&\qquad + e^{\Phi/\hbar}\star O\star\frac{1}{\hbar}(\delta^{0,1}(\Phi_{*,2})) \star(e^{\Phi/\hbar})^{-1}\\
=&e^{\Phi/\hbar}\star\left(\nabla^{0,1}O-\delta^{0,1}O+\frac{1}{\hbar}[-\delta^{0,1}(\Phi_{*,2}), O]_{\star}\right) \star(e^{\Phi/\hbar})^{-1}\\
=&e^{\Phi/\hbar}\star D_{F,\alpha}^{0,1}(O)\star(e^{\Phi/\hbar})^{-1}.
\end{align*}
In the last line, we have used equation \eqref{equation: relation-Phi-I} to obtain:
$$
-\delta^{0,1}(\Phi_{n,2})=-2\sqrt{-1}\cdot\delta^{0,1}(\Phi_\omega)_{n,2}=-2\sqrt{-1}\frac{\sqrt{-1}}{2}I_n=I_n.
$$
\end{proof}

\begin{prop}
 Let $O$ be any section of the Weyl bundle $\mathcal{W}_{X,\mathbb{C}}$, and let $O_q$ (q for quantization) be the unique solution of the following equation:
$$
O\cdot e^{\Phi/\hbar}=e^{\Phi/\hbar}\star O_q.
$$
Here $\Phi$ is the same as the previous part. Then there is the following identity describing an explicit relation between the classical and quantum (Fedosov) connections:
\begin{equation}\label{equation: relation-between-classical-quantum-connections}
D_C(A)\cdot e^{\Phi/\hbar}=e^{\Phi/\hbar}\star D_{F,\alpha}(A_q).
\end{equation}
\end{prop}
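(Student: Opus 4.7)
The plan is to decompose both $D_C$ and $D_{F,\alpha}$ by Dolbeault bidegree and verify the identity separately for the $(1,0)$- and $(0,1)$-parts. Writing $D_K = \nabla - \delta^{1,0} + \sum_{n\geq 2}\tilde R_n^*$ and its complex conjugate $\overline{D}_K$, and recalling that $\tilde R_n^*$ carries a $d\bar z^l$ factor while $I_\alpha \in \mathcal{A}_X^{0,1}(\mathcal{W}_{X,\mathbb{C}})$, one obtains $D_C^{0,1} = \nabla^{0,1}-\delta^{0,1}+\sum_{n\geq 2}\tilde R_n^*$, $D_C^{1,0} = \nabla^{1,0}-\delta^{1,0}+\sum_{n\geq 2}\overline{\tilde R_n^*}$, $D_{F,\alpha}^{0,1} = \nabla^{0,1}-\delta^{0,1}+\frac{1}{\hbar}[I_\alpha,-]_\star$, and $D_{F,\alpha}^{1,0} = \nabla^{1,0}-\delta^{1,0}$, so the statement splits cleanly into a $(0,1)$-identity and a $(1,0)$-identity.

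For the $(0,1)$-part, I would apply $\nabla^{0,1}-\delta^{0,1}$, which is a derivation of both the classical and the Wick products, to both sides of the defining relation $A \cdot e^{\Phi/\hbar} = e^{\Phi/\hbar} \star A_q$. The key input is already supplied by the proof of Lemma \ref{lemma: Fedosov-connection-jet-connection-conjugate}, which yields $(\nabla^{0,1}-\delta^{0,1})(e^{\Phi/\hbar}) = -\frac{1}{\hbar}\,e^{\Phi/\hbar}\star \delta^{0,1}(\Phi_{*,2})$ together with the identification of $\delta^{0,1}(\Phi_{*,2})$ with $-I_\alpha$ coming from equation \eqref{equation: relation-Phi-I} and its $J_\alpha$-analog. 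After using Leibniz and gathering all $e^{\Phi/\hbar}$-factors to one side by means of these formulas, the bracket $\frac{1}{\hbar}[I_\alpha, A_q]_\star$ appearing in $D_{F,\alpha}^{0,1}(A_q)$ materialises on the right, while on the left the corrections produced by the discrepancy between $A \cdot e^{\Phi/\hbar}$ and $A \star e^{\Phi/\hbar}$ — contractions of holomorphic derivatives of $A$ against the anti-holomorphic content of $\Phi$ — assemble precisely into the $\sum_{n\geq 2}\tilde R_n^*(A)$ term of $D_C^{0,1}(A)$.

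For the $(1,0)$-part, one runs the mirror argument, applying $\nabla^{1,0}-\delta^{1,0}$ to both sides. This time $D_{F,\alpha}^{1,0}$ has no bracket term, and the entire $\sum_{n\geq 2}\overline{\tilde R_n^*}(A)$ appearing in $D_C^{1,0}(A)$ must be produced by the classical-vs-Wick discrepancy $A \cdot e^{\Phi/\hbar} - A \star e^{\Phi/\hbar}$, which in this direction contracts anti-holomorphic derivatives of $A$ against holomorphic jets of $\Phi$. The relevant input is the complex-conjugate analog of equation \eqref{equation: relation-Phi-I}, identifying $\delta^{1,0}$ applied to the $(2,n)$-components of $\Phi_\omega$ with the conjugate curvature operators $\overline{\tilde R_n^*}$; this identity is obtained by a verbatim repetition of the derivation of \eqref{equation: relation-Phi-I} with the roles of holomorphic and anti-holomorphic coordinates exchanged.

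The hard part will be the careful bookkeeping of these product-type corrections, matching them term by term against the Kapranov derivations in $D_C$ on the classical side and against the Fedosov bracket in $D_{F,\alpha}$ on the quantum side. The combinatorial input that makes everything cancel is precisely the identity \eqref{equation: relation-Phi-I} tying $I_\alpha$ to $\Phi$, so conceptually the proposition is the global extension, obtained by combining the two bidegrees, of the computation already carried out in Lemma \ref{lemma: Fedosov-connection-jet-connection-conjugate}.
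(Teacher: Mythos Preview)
Your approach is correct but takes a genuinely different route from the paper's. You decompose the \emph{connection} by Dolbeault bidegree and apply $\nabla^{0,1}-\delta^{0,1}$ (respectively $\nabla^{1,0}-\delta^{1,0}$) to both sides of $A\cdot e^{\Phi/\hbar}=e^{\Phi/\hbar}\star A_q$ for a \emph{general} section $A$. The residual identity you must then check is
\[
\sum_{n\geq 2}\tilde R_n^*(A)\cdot e^{\Phi/\hbar}
=\frac{1}{\hbar}\Big((A\cdot e^{\Phi/\hbar})\star I_\alpha - A\cdot(e^{\Phi/\hbar}\star I_\alpha)\Big),
\]
which holds because $I_\alpha$ has anti-holomorphic Weyl degree $\leq 1$, so the star product contributes a single contraction that reproduces $\tilde R_n^*$ via $\frac{\sqrt{-1}}{2}\omega^{i\bar j}\omega_{m\bar j}=\frac{\sqrt{-1}}{2}\cdot\frac{2}{\sqrt{-1}}\delta^i_m$. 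You gesture at this (``the corrections\ldots assemble precisely into\ldots'') but do not write it down; it is the one concrete computation your argument needs.

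The paper instead decomposes the \emph{section}: any monomial of $\mathcal W_{X,\mathbb C}$ factors as $A\cdot B$ with $A\in\mathcal W_X$ and $B\in\overline{\mathcal W}_X$. For $A\in\mathcal W_X$ one has $A_q=A$ by type (no contraction in $e^{\Phi/\hbar}\star A$), and $D_{F,\alpha}|_{\mathcal W_X}=D_C|_{\mathcal W_X}=D_K$, so the identity is immediate. For $B\in\overline{\mathcal W}_X$ one has $B\star e^{\Phi/\hbar}=B\cdot e^{\Phi/\hbar}$, hence $B=e^{\Phi/\hbar}\star B_q\star(e^{\Phi/\hbar})^{-1}$, and Lemma~\ref{lemma: Fedosov-connection-jet-connection-conjugate} (plus its $(1,0)$-analogue) applies verbatim. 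The general case then follows from the Leibniz rule for both $D_C$ and $D_{F,\alpha}$, after observing $(A\cdot B)_q=B_q\star A$. This factorization sidesteps the term-matching computation entirely: the Kapranov derivations $\tilde R_n^*$ never have to be produced by hand, because they are already packaged inside $D_K$ acting on the purely holomorphic factor. Your approach is more uniform but computationally heavier; the paper's is slicker because it exploits the Wick-type structure to reduce to the two extreme cases where the relation between $O$ and $O_q$ is transparent.
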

\begin{proof}
 Let $A$ and $B$ be sections of $\mathcal{W}_X$ and $\overline{\mathcal{W}}_X$ respectively, then so are the $D_C(A)$ and $D_C(B)$ as the classical connection $D_C$ does not change the type in $\mathcal{W}_{X,\mathbb{C}}$. For $A$, there is $A_q=A$ by type reason, and there is
 $$
 D_C(A)\cdot  e^{\Phi/\hbar}=e^{\Phi/\hbar}\star D_C(A_q)=e^{\Phi/\hbar}\star D_{F,\alpha}(A_q).
 $$
 The last equality follows from the fact that the Fedosov connection $D_{F,\alpha}$ equals $D_C$ when restricted to $\mathcal{W}_X$. For $B$, there is
 $$
 B\star e^{\Phi/\hbar}=B\cdot e^{\Phi/\hbar}=e^{\Phi/\hbar}\star B_q.
 $$
 By Lemma \ref{lemma: Fedosov-connection-jet-connection-conjugate}, there is
 \begin{align*}
  D_C^{0,1}(B)=&(\nabla^{0,1}-\delta^{0,1})(B)=(\nabla^{0,1}-\delta^{0,1})\left(e^{\Phi/\hbar}\star B_q\star(e^{\Phi/\hbar})^{-1}\right)=e^{\Phi/\hbar}\star D_{F,\alpha}^{0,1}(B_q)\star(e^{\Phi/\hbar})^{-1}.
 \end{align*}
 In a similar fashion, we can show that $D_C^{1,0}(B)=e^{\Phi/\hbar}\star D_{F,\alpha}^{1,0}(B_q)\star(e^{\Phi/\hbar})^{-1}$. A general monomial of $\mathcal{W}_{X,\mathbb{C}}$ must be a sum of the forms $A\cdot B$. We first have the following:
 \begin{align*}
   (A\cdot B)\cdot e^{\Phi/\hbar}=& A\cdot \left(B\cdot e^{\Phi/\hbar}\right)
  =A\cdot \left(e^{\Phi/\hbar}\star B_q\right)
  = e^{\Phi/\hbar}\star \left(B_q\star A\right),
 \end{align*}
 which implies that $(A\cdot B)_q=B_q\star A$. And there is 
 \begin{align*}
  D_C(A\cdot B)\cdot e^{\Phi/\hbar}
  & = (D_C(A)\cdot B+A\cdot D_C(B))\cdot e^{\Phi/\hbar}\\
  & = D_C(A)\cdot B\cdot e^{\Phi/\hbar}+A\cdot D_C(B)\cdot e^{\Phi/\hbar}\\
  & = D_C(A)\cdot(e^{\Phi/\hbar}\star B_q)+A\cdot (e^{\Phi/\hbar}\star D_{F,\alpha}(B_q))\\
  & = (e^{\Phi/\hbar}\star B_q)\star D_C(A)+A\cdot (e^{\Phi/\hbar}\star D_{F,\alpha}(B_q))\\
  & = (e^{\Phi/\hbar}\star B_q)\star D_{F,\alpha}(A)+(e^{\Phi/\hbar}\star D_{F,\alpha}(B_q))\star A\\
  & = e^{\Phi/\hbar}\star (B_q\star D_{F,\alpha}(A)+D_{F,\alpha}(B_q)\star A)\\
  & = e^{\Phi/\hbar}\star D_{F,\alpha}(B_q\star A)\\
  & = e^{\Phi/\hbar}\star D_{F,\alpha}(A\cdot B)_q.
 \end{align*}
\end{proof}
This proposition reduces the proof of Theorem \ref{thm:main-thm-formal-Toeplitz-Fedosov-equation} to showing that $\sigma(O_f)=f$. This follows the definition of $O_f$ and the fact that the section of $\Phi$ does not contain any non-trivial purely holomorphic or anti-holomorphic components. Thus we complete the proof of Theorem \ref{thm:main-thm-formal-Toeplitz-Fedosov-equation}.

\section{Proof of Theorem \ref{theorem: flat-section-Bargmann-Fock}}\label{appendix: flat-section-Bargmann-Fock}

Using Lemma \ref{lemma: compatibility-Fedosov-connection} and the fact that $D_{F,\alpha}|_{\W_X}=D_K$, we have
\begin{align*}
 D_{B,\alpha}(A\cdot e^{\beta/\hbar}\otimes e_{x_0})
 & = D_{B,\alpha}(A\circledast e^{\beta/\hbar}\otimes e_{x_0})\\
 & = D_{F,\alpha}(A)\circledast (e^{\beta/\hbar}\otimes e_{x_0})+A\circledast D_{B,\alpha}(e^{\beta/\hbar}\otimes e_{x_0})\\
 & = D_K(A)\cdot (e^{\beta/\hbar}\otimes e_{x_0})+A\circledast D_{B,\alpha}(e^{\beta/\hbar}\otimes e_{x_0}).
\end{align*}
Hence, to prove the theorem, we only need to show that $D_{B,\alpha}(e^{\beta/\hbar}\otimes e_{x_0})=0$.
We first recall that $\alpha=-\hbar\cdot R_{i\bar{j}k}^kdz^i\wedge d\bar{z}^j$.
\begin{lem}\label{lemma: trace-I-Ricci-form}
We have
$
(J_\alpha)_n=-(n+1)\hbar\cdot R_{ii_1\cdots i_{n},\bar{l}}^id\bar{z}^l\otimes y^{i_1}\cdots y^{i_n}
$
\end{lem}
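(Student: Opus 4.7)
My plan is to reduce both sides of the claim to the same iterated covariant derivative of the Ricci tensor. Locally, $\alpha = -\hbar\,\text{Ric}_X$ admits a potential $\varphi = -\hbar\log h$, where $h$ is the local function representing the Hermitian metric on the anti-canonical bundle, so that $\varphi_{i\bar l} := \partial_i\partial_{\bar l}\varphi = -\hbar R^k_{i\bar l k} = -\hbar\,\text{Ric}_{i\bar l}$. By the definition $J_\alpha = \sum_{k\geq 1}(\tilde\nabla^{1,0})^k(\bar\partial\varphi)$, each summand $(\tilde\nabla^{1,0})^k(\bar\partial\varphi)$ has holomorphic polynomial degree $k$, hence $(J_\alpha)_n = (\tilde\nabla^{1,0})^n(\bar\partial\varphi)$. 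A straightforward induction on $n$, tracking the denominator $\frac{1}{p_1+p_2}$ produced by $(\delta^{1,0})^{-1}$ at each stage (which equals $\frac{1}{k}$ at the $k$-th step since the Weyl bidegree grows by one per iteration), yields
\[
(\tilde\nabla^{1,0})^n(\bar\partial\varphi) \;=\; \frac{1}{n!}\,\nabla^{1,0}_{a_1}\cdots\nabla^{1,0}_{a_{n-1}}\varphi_{i\bar l}\cdot y^{a_1}\cdots y^{a_{n-1}}y^i\,d\bar z^l,
\]
so substituting $\varphi_{i\bar l} = -\hbar\,\text{Ric}_{i\bar l}$ expresses $(J_\alpha)_n$ as $-\frac{\hbar}{n!}$ times the iterated holomorphic covariant derivative of $\text{Ric}_{i\bar l}$ contracted against $y^{a_1}\cdots y^{a_{n-1}}y^i\,d\bar z^l$.

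Next, I express the trace $R^i_{ii_1\cdots i_n,\bar l}\,y^{i_1}\cdots y^{i_n}$ in the same shape. Iterating $R^*_n = (\delta^{1,0})^{-1}\nabla^{1,0}(R^*_{n-1})$ starting from $R^*_2$, with the analogous denominator count $\tfrac{1}{3}\cdot\tfrac{1}{4}\cdots\tfrac{1}{n+1}$, gives
\[
R^j_{i_1\cdots i_{n+1},\bar l}\,y^{i_1}\cdots y^{i_{n+1}} \;=\; \frac{2!}{(n+1)!}\,\nabla^{1,0}_{a_1}\cdots\nabla^{1,0}_{a_{n-1}}R^j_{i_1 i_2,\bar l}\cdot y^{a_1}\cdots y^{a_{n-1}}y^{i_1}y^{i_2}.
\]
Applying $\partial_{y^j}$ and summing over $j$, the left-hand side becomes $(n+1)R^i_{ii_1\cdots i_n,\bar l}\,y^{i_1}\cdots y^{i_n}$ by the symmetry of $R^j_{i_1\cdots i_{n+1},\bar l}$ in its lower indices. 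On the right-hand side, the two terms in which $\partial_{y^j}$ hits $y^{i_1}$ or $y^{i_2}$ produce the trace $R^j_{j\bar l i} = \text{Ric}_{i\bar l}$ directly; the remaining $n-1$ terms hit a factor $y^{a_k}$ and produce expressions involving $\nabla^{1,0}_j R^j_{i_1\bar l i_2}$. I then apply the K\"ahler second Bianchi identity
\[
\nabla^{1,0}_j R^j_{i_1\bar l i_2} \;=\; \nabla^{1,0}_{i_1}R^j_{j\bar l i_2} \;=\; \nabla^{1,0}_{i_1}\text{Ric}_{i_2\bar l},
\]
together with the commutativity of holomorphic covariant derivatives on a K\"ahler manifold (the Levi-Civita curvature is of type $(1,1)$), to reduce each such term to the same form. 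After relabeling, all $n+1$ contributions coincide, giving
\[
(n+1)R^i_{ii_1\cdots i_n,\bar l}\,y^{i_1}\cdots y^{i_n} \;=\; \frac{1}{n!}\,\nabla^{1,0}_{a_1}\cdots\nabla^{1,0}_{a_{n-1}}\text{Ric}_{i\bar l}\cdot y^{a_1}\cdots y^{a_{n-1}}y^i,
\]
which, combined with the formula from the previous paragraph, yields $(J_\alpha)_n = -(n+1)\hbar R^i_{ii_1\cdots i_n,\bar l}\,d\bar z^l\otimes y^{i_1}\cdots y^{i_n}$.

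The main difficulty will be the combinatorial bookkeeping in the second step: verifying that all $n+1$ contributions from $\partial_{y^j}$ land on the same iterated covariant derivative of $\text{Ric}$. This requires the K\"ahler second Bianchi identity to move the contracted derivative $\nabla^{1,0}_j$ onto one of the free lower indices, together with careful tracking of the normalization used in the paper's definition of the Kapranov coefficients (which is visible already in the factor $\tfrac12$ in $R^*_2 = \tfrac12 R^m_{i\bar j k}\,d\bar z^j\otimes y^iy^k\partial_{y^m}$, so that $R^j_{i_1 i_2,\bar l} = \tfrac12 R^j_{i_1\bar l i_2}$ rather than the fully symmetrized tensor).
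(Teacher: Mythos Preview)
Your argument is correct but takes a more computational route than the paper's. You unroll both $(J_\alpha)_n$ and the right-hand side into explicit iterated holomorphic covariant derivatives of the Ricci tensor, then match them using the K\"ahler second Bianchi identity together with the commutativity $[\nabla^{1,0}_a,\nabla^{1,0}_b]=0$. The paper instead runs a short induction on $n$: writing $(J_\alpha)_{n-1}=-n\hbar\,R^{i}_{ii_1\cdots i_{n-1},\bar l}\,d\bar z^l\otimes y^{i_1}\cdots y^{i_{n-1}}$ as the contraction of the upper index of $R^*_n$ against one of its lower indices, it applies $(\delta^{1,0})^{-1}\nabla^{1,0}$ and uses only that $\nabla^{1,0}$ commutes with the natural contraction $TX\otimes T^*X\to\mathbb{C}$ to pass to the corresponding contraction of $R^*_{n+1}$, picking up the factor $(n+1)/n$ from the shift in the normalization of $(\delta^{1,0})^{-1}$. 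No Bianchi identity is invoked. Your approach has the advantage of giving a closed formula $\tfrac{1}{n!}\nabla^{1,0}_{a_1}\cdots\nabla^{1,0}_{a_{n-1}}\mathrm{Ric}_{i\bar l}$ for both sides, at the cost of the combinatorial bookkeeping you mention; the paper's approach is shorter and avoids unpacking the coefficients $R^j_{i_1\cdots i_{n+1},\bar l}$ altogether.
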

\begin{proof}
The proof is by induction on $n$.
For $n=1$, we have
$$
(J_\alpha)_1=(\delta^{1,0})^{-1}\left(-\hbar\cdot R_{i\bar{j}k}^kdz^i\wedge d\bar{z}^j\right)=-2\hbar\cdot\left(\frac{1}{2} R_{i\bar{j}k}^kd\bar{z}^j\otimes y^i\right).
$$
Then by the induction hypothesis for $n-1$, we have
\begin{align*}
 \nabla^{1,0}(J_\alpha)_{n-1}=&\nabla^{1,0}\left(-n\hbar\cdot R_{ii_1\cdots i_{n-1},\bar{l}}^id\bar{z}^l\otimes y^{i_1}\cdots y^{i_{n-1}}\right).
\end{align*}
On the other hand, 
\begin{align*}
 \nabla^{1,0}\left(n\hbar\cdot R_{i_1\cdots i_{n},\bar{l}}^jd\bar{z}^l\otimes y^{i_1}\cdots y^{i_{n}}\otimes\partial_{y^j}\right)
 =(n+1)\cdot n\hbar\cdot R_{i_1\cdots i_{n+1},\bar{l}}^jdz^{i_{n+1}}\wedge d\bar{z}^l\otimes y^{i_1}\cdots y^{i_n}\otimes\partial_{y^j}.
\end{align*}
Since $\nabla^{1,0}$ is compatible between the contraction between $TX$ and $T^*X$, the above computation shows that
$$
(J_\alpha)_n=(\delta^{1,0})^{-1}(\nabla^{1,0}(J_\alpha)_{n-1})=-(n+1)\hbar\cdot R_{i_1\cdots i_{n+1},\bar{l}}^{i_1} d\bar{z}^l\otimes y^{i_1}\cdots y^{i_n}y^{i_{n+1}}.
$$
\end{proof}
\begin{lem}
 The section $\beta$ satisfies 
 $
 D_K(\beta)=2\sqrt{-1}\omega_{i\bar{j}}d\bar{z}^j\otimes y^i-\partial\rho.
 $
\end{lem}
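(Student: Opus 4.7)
Decompose the Kapranov connection according to bi-degree,
\[
D_K = D_K^{1,0} + D_K^{0,1}, \qquad D_K^{1,0} := \nabla^{1,0}-\delta^{1,0}, \qquad D_K^{0,1} := \nabla^{0,1} + \sum_{n\ge 2}\tilde R_n^{*},
\]
and treat the two pieces separately. The target splits as
\[
D_K^{1,0}(\beta)=-\partial\rho, \qquad D_K^{0,1}(\beta)= 2\sqrt{-1}\,\omega_{i\bar j}\, d\bar z^{j}\otimes y^{i}.
\]

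\textbf{The $(1,0)$-direction.} Writing $\beta=\sum_{k\ge 1}\beta_{k}$ with $\beta_{0}:=\rho$ and $\beta_{k}=(\delta^{1,0})^{-1}\nabla^{1,0}\beta_{k-1}$, I will prove by induction on $k\ge 1$ the identity
\[
\delta^{1,0}(\beta_{k}) = \nabla^{1,0}(\beta_{k-1}).
\]
The base $k=1$ is immediate: $\beta_{1}=\partial_{i}\rho\cdot y^{i}$ gives $\delta^{1,0}\beta_{1}=\partial_{i}\rho\,dz^{i}=\nabla^{1,0}\rho$. For $k\ge 2$, apply the Hodge decomposition $\delta^{1,0}(\delta^{1,0})^{-1}+(\delta^{1,0})^{-1}\delta^{1,0}=\operatorname{id}$ (valid on positive-degree elements) to $\beta_{k}=(\delta^{1,0})^{-1}\nabla^{1,0}\beta_{k-1}$, obtaining
\[
\delta^{1,0}\beta_{k}=\nabla^{1,0}\beta_{k-1}-(\delta^{1,0})^{-1}\delta^{1,0}\nabla^{1,0}\beta_{k-1}.
\]
The second term vanishes: indeed $[\delta^{1,0},\nabla^{1,0}]=0$ (a direct check on generators, exploiting the fact that on a K\"ahler manifold the Christoffel symbols carry purely holomorphic indices, are symmetric, and the $(2,0)$-curvature of $\nabla^{1,0}$ on the holomorphic cotangent bundle vanishes so that $(\nabla^{1,0})^{2}=0$), hence by the inductive hypothesis $\delta^{1,0}\nabla^{1,0}\beta_{k-1}=\nabla^{1,0}\delta^{1,0}\beta_{k-1}=\nabla^{1,0}\nabla^{1,0}\beta_{k-2}=0$. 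Summing over $k$ yields $\delta^{1,0}\beta=\partial\rho+\nabla^{1,0}\beta$, which is precisely $D_{K}^{1,0}\beta=-\partial\rho$.

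\textbf{The $(0,1)$-direction.} Organize by polynomial degree in $y$. At $y$-degree one, using $\nabla^{0,1}y^{i}=0$ (K\"ahler compatibility of $\nabla$ with the complex structure) and $\partial\bar\partial\rho = -2\sqrt{-1}\,\omega$,
\[
(D_{K}^{0,1}\beta)_{1}=\nabla^{0,1}\beta_{1}=\partial_{\bar j}\partial_{i}\rho\cdot d\bar z^{j}\otimes y^{i}=2\sqrt{-1}\,\omega_{i\bar j}\,d\bar z^{j}\otimes y^{i}.
\]
For each $y$-degree $k\ge 2$, the claim is that $(D_K^{0,1}\beta)_{k}=\nabla^{0,1}\beta_{k}+\sum_{n=2}^{k}\tilde R_{n}^{*}\beta_{k-n+1}=0$. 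The strategy is to expand $\beta_{k}$ as a symmetric polynomial whose coefficients are iterated holomorphic covariant Hessians of $\rho$, then push $\nabla^{0,1}$ through using the Kähler identity $\nabla\omega=0$, i.e. $\partial_{a}\omega_{b\bar\ell}=\Gamma_{ab}^{c}\omega_{c\bar\ell}$. This lets one replace anti-holomorphic derivatives of Hessians of $\rho$ by anti-holomorphic derivatives of Christoffels, which by definition produce precisely the curvature coefficients assembled in $\tilde R_{n}^{*}$ through the recursion $R_{n}^{*}=(\delta^{1,0})^{-1}\nabla^{1,0}R_{n-1}^{*}$. The cancellation against $\sum_{n}\tilde R_{n}^{*}\beta_{k-n+1}$ then follows from the same combinatorics that ensures $D_{K}^{2}=0$ (Kapranov's flatness theorem), applied formally to the ``test section'' $\beta$.

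\textbf{Expected obstacle.} Step 2 (the $(1,0)$-part) is routine once one records $[\delta^{1,0},\nabla^{1,0}]=0$ and $(\nabla^{1,0})^{2}=0$. The real work is in Step 3: tracking the precise combinatorial match between $\nabla^{0,1}$ applied to iterated symmetric Hessians of the K\"ahler potential and the tower $\tilde R_{n}^{*}\beta_{k-n+1}$. One must repeatedly invoke $\nabla\omega=0$ to convert derivatives of the Hessian into curvature symbols, and then recognize the result as the Kapranov curvature data. In practice, the cleanest route is to introduce $\tilde\beta := \rho+\beta=\sum_{k\ge 0}(\tilde\nabla^{1,0})^{k}\rho$, show that $D_{K}^{1,0}\tilde\beta=0$ by the argument of Step 2, and then use $D_{K}^{2}=0$ to characterize $D_{K}^{0,1}\tilde\beta$ as the unique $D_{K}^{1,0}$-flat extension of its degree-zero symbol $\bar\partial\rho$; subtracting $D_{K}(\rho)=d\rho$ isolates exactly the stated expression.
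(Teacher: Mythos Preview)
Your plan is correct in substance, and in fact it spells out precisely what the paper sweeps under the phrase ``a straightforward computation shows.'' The paper's proof is four lines: it simply asserts that $D_K(\beta)$ collapses to $(-\delta^{1,0}+\bar\partial)$ applied to the single term $\tilde\nabla^{1,0}\rho=\beta_1$, and then reads off the answer from $\delta^{1,0}\beta_1=\partial\rho$ and $\bar\partial\beta_1=(\delta^{1,0})^{-1}(\bar\partial\partial\rho)$. Your Steps~2 and~3 are exactly the missing justification for that collapse, so there is no genuine difference in route---only in level of detail.

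Your Step~2 (the $(1,0)$-part) is clean and complete. Step~3 as written, however, is heavier than it needs to be, and the ``direct combinatorial match'' you sketch is not actually carried out; the appeal to ``the same combinatorics that ensures $D_K^2=0$'' is too vague to stand on its own. The cleanest argument is the one you relegate to the final paragraph, and you should promote it to the main line: once Step~2 gives $D_K^{1,0}\tilde\beta=0$, flatness $D_K^2=0$ forces $D_K^{0,1}\tilde\beta\in\A^{0,1}_X(\W_X)$ to be $D_K^{1,0}$-closed, and the same iteration underlying Proposition~\ref{proposition:flat-sections-holomorphic-Weyl} identifies it uniquely as $\sum_{k\ge0}(\tilde\nabla^{1,0})^k(\bar\partial\rho)$. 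The crucial observation---which is really all that the paper's ``straightforward computation'' amounts to---is that this series \emph{terminates at $k=1$}: the term $\tilde\nabla^{1,0}(\bar\partial\rho)$ is proportional to $\omega_{i\bar j}\,d\bar z^j\otimes y^i$, and one more application of $\tilde\nabla^{1,0}$ kills it because $\nabla\omega=0$. That single fact replaces your entire combinatorial match in degree $\ge 2$. Subtracting $D_K(\rho)=d\rho$ then yields the lemma immediately.

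One small caution: in your degree-one step you equate $\partial_{\bar j}\partial_i\rho$ directly with $2\sqrt{-1}\,\omega_{i\bar j}$, but with the convention $\partial\bar\partial\rho=-2\sqrt{-1}\,\omega$ the raw mixed partial carries the opposite sign; the paper arrives at the stated sign by writing the answer as $(\delta^{1,0})^{-1}(\bar\partial\partial\rho)$ rather than as $\bar\partial\beta_1$. Keep track of the anticommutation of $\bar\partial$ with $(\delta^{1,0})^{-1}$ when you compare the two expressions.
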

\begin{proof}
The function $\rho$ satisfies the condition that $\partial\bar{\partial}(\rho)=-2\sqrt{-1}\omega$. Recall that $\beta=\sum_{k\geq 1}(\tilde{\nabla}^{1,0})^k(\rho)$. A straightforward computation shows that 
\begin{align*}
D_K(\beta)=&(-\delta^{1,0}+\bar{\partial})(\tilde{\nabla}^{1,0}\rho)=-\partial\rho+\bar{\partial}\circ(\delta^{1,0})^{-1}(\nabla^{1,0}\rho)\\
=&-\partial\rho+(\delta^{1,0})^{-1}(\bar{\partial}\partial\rho)=2\sqrt{-1}\omega_{i\bar{j}}d\bar{z}^j\otimes y^i-\partial\rho.
\end{align*}
\end{proof}
We also have the following:
\begin{align*}
 &\frac{1}{\hbar}I_n\circledast (e^{\beta/\hbar}\otimes e_{x_0})\\
 =&-2\sqrt{-1}\cdot R_{i_1\cdots i_n,\bar{l}}^j\omega_{j\bar{k}}d\bar{z}^l\otimes(y^{i_1}\cdots y^{i_n}\bar{y}^k)\circledast(e^{\beta/\hbar}\otimes e_{x_0})\\
 =&-2\sqrt{-1}\cdot R_{i_1\cdots i_n,\bar{l}}^j\omega_{j\bar{k}}d\bar{z}^l\otimes(\frac{\omega^{i\bar{k}}}{2\sqrt{-1}}\frac{\partial}{\partial y^i})(y^{i_1}\cdots y^{i_n}e^{\beta/\hbar}\otimes e_{x_0})\\
 =&R_{i_1\cdots i_n,\bar{l}}^id\bar{z}^l\otimes y^1\cdots y^n\frac{\partial(\beta/\hbar))}{\partial y^i}\cdot(e^{\beta/\hbar}\otimes e_{x_0})+n\cdot R_{ii_1\cdots i_{n-1},\bar{l}}^id\bar{z}^l\otimes y^{i_1}\cdots y^{i_{n-1}}\cdot(e^{\beta/\hbar}\otimes e_{x_0})\\
 =&\tilde{R}_n^*(\beta/\hbar)+n\cdot R_{ii_1\cdots i_{n-1},\bar{l}}^id\bar{z}^l\otimes y^{i_1}\cdots y^{i_{n-1}}\cdot(e^{\beta/\hbar}\otimes e_{x_0}).
\end{align*}
Summarizing the above computations, we have
\begin{align*}
 &D_{B,\alpha}(e^{\beta/\hbar}\otimes e_{x_0})\\
 =&\left(\nabla+\frac{1}{\hbar}\gamma_\alpha\circledast\right)(e^{\beta/\hbar}\otimes e_{x_0})+e^{\beta/\hbar}\otimes \nabla_{L_{\omega/\hbar}}e_{x_0}\\
 =&\left(\nabla(\beta/\hbar)+\frac{2\sqrt{-1}}{\hbar}\omega_{i\bar{j}}(dz^i\otimes\bar{y}^i-d\bar{z}^j\otimes y^i)\circledast+\frac{1}{\hbar}(I+J_\alpha)\circledast\right)(e^{\beta/\hbar}\otimes e_{x_0})+e^{\beta/\hbar}\otimes \nabla_{L_{\omega/\hbar}}e_{x_0}\\
 =&\left(\nabla(\beta/\hbar)-\frac{2\sqrt{-1}}{\hbar}\omega_{i\bar{j}}d\bar{z}^j\otimes y^i+\frac{1}{\hbar}\partial\rho\right)(e^{\beta/\hbar}\otimes e_{x_0})\\
 &\qquad+\frac{1}{\hbar}(2\sqrt{-1}\omega_{i\bar{j}}dz^i\otimes\bar{y}^j+I+J_\alpha)\circledast(e^{\beta/\hbar}\otimes e_{x_0})\\
 =&\left(\nabla(\beta/\hbar)+\sum_{n\geq 2}\tilde{R}_n^*(\beta/\hbar)-\frac{2\sqrt{-1}}{\hbar}\omega_{i\bar{j}}d\bar{z}^j\otimes y^i+\frac{1}{\hbar}\partial\rho+2\sqrt{-1}\omega_{i\bar{j}}dz^i\frac{\omega^{k\bar{j}}}{2\sqrt{-1}}\frac{\partial(\beta/\hbar)}{\partial y^k}\right)(e^{\beta/\hbar}\otimes e_{x_0})\\
  =&\left(\nabla(\beta/\hbar)+\sum_{n\geq 2}\tilde{R}_n^*(\beta/\hbar)-\frac{2\sqrt{-1}}{\hbar}\omega_{i\bar{j}}d\bar{z}^j\otimes y^i+\frac{1}{\hbar}\partial\rho-\delta^{1,0}(\beta/\hbar)\right)(e^{\beta/\hbar}\otimes e_{x_0})\\
  =&\frac{1}{\hbar}\left(D_K(\beta)-2\sqrt{-1}\omega_{i\bar{j}}d\bar{z}^j\otimes y^i+\partial\rho\right)(e^{\beta/\hbar}\otimes e_{x_0})\\
  =&0.
\end{align*}
This completes the proof of Theorem \ref{theorem: flat-section-Bargmann-Fock}.

\begin{bibdiv}
\begin{biblist}

\bib{BGKP}{article}{
    AUTHOR = {Baranovsky, V.},
    author = {Ginzburg, V.},
    author = {Kaledin, D.},
    author = {Pecharich, J.},
     TITLE = {Quantization of line bundles on lagrangian subvarieties},
   JOURNAL = {Selecta Math. (N.S.)},
    VOLUME = {22},
      YEAR = {2016},
    NUMBER = {1},
     PAGES = {1--25},
}

\bib{Bordemann}{article}{
    AUTHOR = {Bordemann, M.},
    author = {Waldmann, S.},
     TITLE = {A {F}edosov star product of the {W}ick type for {K}\"{a}hler
              manifolds},
   JOURNAL = {Lett. Math. Phys.},
    VOLUME = {41},
      YEAR = {1997},
    NUMBER = {3},
     PAGES = {243--253},
}

\bib{Bordemann-Waldmann}{article}{
    AUTHOR = {Bordemann, M.},
    author = {Waldmann, S.},
     TITLE = {Formal {GNS} construction and states in deformation
              quantization},
   JOURNAL = {Comm. Math. Phys.},
    VOLUME = {195},
      YEAR = {1998},
    NUMBER = {3},
     PAGES = {549--583},
}

\bib{Bordemann-Meinrenken}{article}{
    AUTHOR = {Bordemann, M.},
    author = {Meinrenken, E.},
    author = {Schlichenmaier, M.},
     TITLE = {Toeplitz quantization of {K}\"{a}hler manifolds and {${\rm
              gl}(N)$}, {$N\to\infty$} limits},
   JOURNAL = {Comm. Math. Phys.},
    VOLUME = {165},
      YEAR = {1994},
    NUMBER = {2},
     PAGES = {281--296},
}

\bib{Chan-Leung-Li}{article}{
   author={Chan, K.},
   author={Leung, N.},
   author={Li, Q.},
   title={A geometric construction of representations of the Berezin-Toeplitz quantization},
   eprint={arXiv:2004.00523 [math-QA]},
}

\bib{CLL}{article}{
   author={Chan, K.},
   author={Leung, N.},
   author={Li, Q.},
   title={Kapranov's $L_\infty$ structures, Fedosov's star products, and one-loop exact BV quantizations on K\"ahler manifolds},
   eprint={ arXiv:2008.07057 [math-QA]},
}

\bib{Kevin-book}{book}{
   author={Costello, K.},
   title={Renormalization and effective field theory},
   series={Mathematical Surveys and Monographs},
   volume={170},
   publisher={American Mathematical Society},
   place={Providence, RI},
   date={2011},
   pages={viii+251},
   isbn={978-0-8218-5288-0},
}

\bib{DLS}{article}{
     author= {Dolgushev, V. A. },
     author= {Lyakhovich, S. L.},
     author= {Sharapov, A. A.},
     TITLE = {Wick type deformation quantization of {F}edosov manifolds},
   JOURNAL = {Nuclear Phys. B},
    VOLUME = {606},
      YEAR = {2001},
    NUMBER = {3},
     PAGES = {647--672},
}

\bib{Donaldson}{incollection}{
	AUTHOR = {Donaldson, S. K.},
	TITLE = {Planck's constant in complex and almost-complex geometry},
	BOOKTITLE = {X{III}th {I}nternational {C}ongress on {M}athematical
		{P}hysics ({L}ondon, 2000)},
	PAGES = {63--72},
	PUBLISHER = {Int. Press, Boston, MA},
	YEAR = {2001},
}

\bib{Fed}{article}{
    AUTHOR = {Fedosov, B. V.},
     TITLE = {A simple geometrical construction of deformation quantization},
   JOURNAL = {J. Differential Geom.},
    VOLUME = {40},
      YEAR = {1994},
    NUMBER = {2},
     PAGES = {213--238}
}

\bib{Fedbook}{book}{
    AUTHOR = {Fedosov, B. V.},
     TITLE = {Deformation quantization and index theory},
    SERIES = {Mathematical Topics},
    VOLUME = {9},
 PUBLISHER = {Akademie Verlag, Berlin},
      YEAR = {1996},
     PAGES = {325},
}

\bib{Kapranov}{article}{
    AUTHOR = {Kapranov, M.},
     TITLE = {Rozansky-{W}itten invariants via {A}tiyah classes},
   JOURNAL = {Compositio Math.},
    VOLUME = {115},
      YEAR = {1999},
    NUMBER = {1},
     PAGES = {71--113},
}

\bib{Karabegov96}{article}{
    AUTHOR = {Karabegov, A.V.},
     TITLE = {Deformation quantizations with separation of variables on a
              {K}\"{a}hler manifold},
   JOURNAL = {Comm. Math. Phys.},
    VOLUME = {180},
      YEAR = {1996},
    NUMBER = {3},
     PAGES = {745--755},
}

\bib{Karabegov00}{incollection}{
    AUTHOR = {Karabegov, A.V.},
     TITLE = {On {F}edosov's approach to deformation quantization with
              separation of variables},
 BOOKTITLE = {Conf\'{e}rence {M}osh\'{e} {F}lato 1999, {V}ol. {II} ({D}ijon)},
    SERIES = {Math. Phys. Stud.},
    VOLUME = {22},
     PAGES = {167--176},
 PUBLISHER = {Kluwer Acad. Publ., Dordrecht},
      YEAR = {2000},
}

\bib{Karabegov07}{article}{
    AUTHOR = {Karabegov, A.V.},
     TITLE = {A formal model of {B}erezin-{T}oeplitz quantization},
   JOURNAL = {Comm. Math. Phys.},
    VOLUME = {274},
      YEAR = {2007},
    NUMBER = {3},
     PAGES = {659--689},
}

\bib{Karabegov}{article}{
    AUTHOR = {Karabegov, A.V.},
    author = {Schlichenmaier, M.},
     TITLE = {Identification of {B}erezin-{T}oeplitz deformation
              quantization},
   JOURNAL = {J. Reine Angew. Math.},
    VOLUME = {540},
      YEAR = {2001},
     PAGES = {49--76},
}



\bib{Kontsevich2}{article}{
    AUTHOR = {Kontsevich, M.},
     TITLE = {Rozansky-{W}itten invariants via formal geometry},
   JOURNAL = {Compositio Math.},
    VOLUME = {115},
      YEAR = {1999},
    NUMBER = {1},
     PAGES = {115--127},
}

\bib{Ma-Ma-1}{article}{
    AUTHOR = {Ma, X.},
    author = {Marinescu, G.},
     TITLE = {Toeplitz operators on symplectic manifolds},
   JOURNAL = {J. Geom. Anal.},
    VOLUME = {18},
      YEAR = {2008},
    NUMBER = {2},
     PAGES = {565--611},
}

\bib{Ma-Ma}{article}{
    AUTHOR = {Ma, X.},
    author = {Marinescu, G.},
     TITLE = {Berezin-{T}oeplitz quantization on {K}\"{a}hler manifolds},
   JOURNAL = {J. Reine Angew. Math.},
    VOLUME = {662},
      YEAR = {2012},
     PAGES = {1--56},

}

\bib{Melrose}{article}{
    AUTHOR = {Melrose, R.},
     TITLE = {Star products and local line bundles},
   JOURNAL = {Ann. Inst. Fourier (Grenoble)},
    VOLUME = {54},
      YEAR = {2004},
    NUMBER = {5},
     PAGES = {1581--1600, xvi, xxii},
}

\bib{Nest-Tsygan}{article}{
    AUTHOR = {Nest, R.},
    author = {Tsygan, B.},
     TITLE = {Remarks on modules over deformation quantization algebras},
   JOURNAL = {Mosc. Math. J.},
    VOLUME = {4},
      YEAR = {2004},
    NUMBER = {4},
     PAGES = {911--940, 982},
}

\bib{Neumaier}{article}{
    AUTHOR = {Neumaier, N.},
     TITLE = {Universality of {F}edosov's construction for star products of
              {W}ick type on pseudo-{K}\"{a}hler manifolds},
   JOURNAL = {Rep. Math. Phys.},
    VOLUME = {52},
      YEAR = {2003},
    NUMBER = {1},
     PAGES = {43--80},
}

\bib{Tian}{article}{
    AUTHOR = {Tian, G.},
     TITLE = {On a set of polarized {K}\"{a}hler metrics on algebraic manifolds},
   JOURNAL = {J. Differential Geom.},
    VOLUME = {32},
      YEAR = {1990},
    NUMBER = {1},
     PAGES = {99--130},
}

\bib{Tsygan}{article}{
    AUTHOR = {Tsygan, B.},
     TITLE = {Oscillatory modules},
   JOURNAL = {Lett. Math. Phys.},
    VOLUME = {88},
      YEAR = {2009},
    NUMBER = {1-3},
     PAGES = {343--369},
}

\bib{Zelditch}{article}{
	AUTHOR = {Zelditch, S.},
	TITLE = {Szego kernels and a theorem of {T}ian},
	JOURNAL = {Internat. Math. Res. Notices},
	YEAR = {1998},
	NUMBER = {6},
	PAGES = {317--331},
}

\end{biblist}
\end{bibdiv}

\end{document}